\newcommand{\bl}{\color{blue}}
\renewcommand{\bl}{}
\newcommand{\nc}{\newcommand}
\nc{\ha}{\frac{1}{2}}
\nc{\tha}{\frac{3}{2}}
\nc{\s}{\widetilde}
\nc{\dst}{\displaystyle}
\nc{\gm}{\gamma}
\nc{\ga}{\Gamma}
\nc{\ka}{\kappa}
\nc{\eps}{\varepsilon}
\nc{\vep}{\varepsilon}
\nc{\hi}{\varphi}
\nc{\vfi}{\varphi}
\nc{\oa}{\Omega}
\nc{\Om}{\Omega}
\nc{\om}{\omega}
\nc{\ov}{\overline}
\nc{\lon}{\longrightarrow}
\nc{\scr}{\scriptstyle}
\nc{\ex}{\exists}
\nc{\fo}{\forall}
\nc{\pa}{\partial}
\nc{\und}{\underline}
\nc{\ze}{\zeta}
\nc{\si}{\sigma}
\nc{\tri}{\triangle}
\nc{\al}{\alpha}
\nc{\bt}{\beta}
\nc{\ts}{\textstyle}
\nc{\lf}{\left}
\nc{\ri}{\right}
\nc{\lm}{\hat{P}_{\chi_\Delta}}
\nc{\dt}{\delta}
\nc{\de}{\delta}
\nc{\te}{\theta}
\nc{\tl}{\tilde}
\nc{\wt}{\widetilde}
\nc{\p}{\prime}
\nc{\m}{\mu}
\nc{\R}{\mathbb{R}}
\nc{\B}{\mathbb{B}}
\nc{\N}{\mathbb{N}}
\nc{\C}{\mathbb{C}}
\newcommand{\be}{\begin{equation}}
\newcommand{\ee}{\end{equation}}
\newcommand{\V}{{\cal V}}
\newcommand{\W}{{\cal W}}
\begin{document}

\title{Singular Localised Boundary-Domain Integral Equations of Acoustic Scattering
by Inhomogeneous
Anisotropic Obstacle}
\author[1]{Otar Chkadua}
\author[2]{Sergey E. Mikhailov*}
\author[3,4]{David Natroshvili}

\authormark{CHKADUA \textsc{et al}}

\address[1]{\orgdiv{A.Razmadze Mathematical Institute},
\orgname{I.Javakhishvili Tbilisi State University}, \orgaddress{\state{Tbilisi},
\country{Georgia}}}

\address[2]{\orgdiv{Department of Mathematics}, \orgname{Brunel University London}, \orgaddress{
 \country{UK}}}

\address[3]{\orgdiv{Department of Mathematics}, \orgname{Georgian Technical University}, \orgaddress{\state{Tbilisi}, \country{Georgia}}}  

\address[4]{\orgdiv{I.Vekua Institute of Applied Mathematics}, \orgname{I.Javakhishvili Tbilisi State University}, \orgaddress{\state{Tbilisi}, \country{Georgia}}}

\corres{*S.E. Mikhailov, Department of Mathematics, Brunel University London,
Uxbridge, UB8 3PH,  UK. \email{sergey.mikhailov@brunel.ac.uk}}


\abstract[Summary]
{
 We consider the time-harmonic  acoustic wave scattering
by a bounded {\it anisotropic inhomogeneity} embedded in an unbounded
{\it anisotropic}  homogeneous medium. The material parameters   may have
 discontinuities across the interface between the inhomogeneous interior and homogeneous exterior regions.
The corresponding mathematical problem is formulated as a transmission problems for  a second
order elliptic partial differential equation  of Helmholtz type with discontinuous variable coefficients.
Using a localised quasi-parametrix based  on the harmonic fundamental solution, the transmission
problem  for arbitrary values of the frequency parameter is reduced equivalently to a  system of
{\it singular localised boundary-domain integral equations}.
Fredholm properties of the corresponding {\it localised boundary-domain integral operator} are studied and its
invertibility is established in appropriate Sobolev-Slobodetskii and Bessel potential spaces,
which implies existence and uniqueness results for
the localised boundary-domain integral equations  system  and
the corresponding acoustic scattering transmission  problem.
}

\keywords{Acoustic scattering, partial differential equations,
transmission problems, localized parametrix,
 localized boundary-domain integral equations,
pseudodifferential equations.}

\maketitle
\section{Introduction}
We consider the time-harmonic  acoustic wave scattering
by a bounded {\it anisotropic inhomogeneous obstacle} embedded in an unbounded
{\it anisotropic}  homogeneous medium.
 We assume that the material parameters and  speed of sound  are functions of position
within the  inhomogeneous  bounded obstacle.
The physical model problem with a frequency parameter $\omega\in \mathbb{R}$ is formulated mathematically
as a  transmission problem  for a second
order elliptic partial differential equation  with variable coefficients
$A_2(x,\partial_x)\,u(x)\equiv \partial_{x_k}\,\big(a^{(2)}_{kj}(x)\,
\partial_ {x_j} u(x)\big) +\omega^2\,\kappa_2(x)\,u(x)=f_2$
in the {\it inhomogeneous anisotropic}  bounded region  $\Omega^+\subset \mathbb{R}^3$
and for a Helmholtz type equation with constant coefficients
$A_1(\partial_x)u(x)\equiv\, a^{(1)}_{kj}\, \partial_{x_k}\partial_ {x_j} u(x)  +\omega^2\kappa_1\,u(x)=f_1$
in the {\it homogeneous anisotropic} unbounded region  $\Omega^-=\mathbb{R}^3\setminus \overline{\Omega^+}$.
The material parameters $a^{(q)} _{kj}$  and  $\kappa_q$ are not assumed
to be  continuous across the interface  $S=\pa \Omega^-=\pa \Omega^+$ between the inhomogeneous
interior and homogeneous  exterior regions.
  The transmission conditions are assumed on the interface,  relating the interior and
  exterior traces of the wave amplitude $u$ and its  co-normal derivative  on $S$.

{\bl The transmission problems for the  Helmholtz equation, i.e., when $A_2(x,\pa)=A_1(\pa)=\Delta +\omega^2$, which corresponds to a {\it homogeneous isotropic media}, are well studied in the case of smooth and Lipschitz interface (see 
Costabel \& Stephan\cite{CS}, Kleinman \& Martin\cite{KM}, Kress \& Roach\cite{KR}, Torres \& Welland\cite{TW} and the references therein).}

The  {\it special isotropic transmission problems}, when  $A_2(x,\partial_x)=\Delta +\omega^2\kappa_2(x)$
and $A_1(\partial_x)=\Delta +\omega^2$ is the Helmholtz operator
are also well   presented  in the literature
(see Colton \& Kress\cite{CK2}, N{\'e}d{\'e}lec \cite{Ned}, and the references therein).
  The  acoustic scattering problem in the whole space  corresponding to   a more general  isotropic case,
when $ a^{(2)} _{kj}(x)= a (x)\,\delta_{kj}$, where $\delta_{kj}$ is Kronecker's delta,  and $A_1(\partial_x)=\Delta +\omega^2$,   was analysed by the indirect boundary-domain integral equation method   by Werner
 \cite{Wer1}-\cite{Wer2}.
Applying the potential method based on the Helmholtz fundamental solution, P.Werner    reduced
the problem to the {\it Fredholm-Riesz type
integral equations} system and   proved  its unique solvability.  The same problem by the direct
method  was  considered by Martin \cite{Mart}, where the problem  was  reduced to a singular
integro-differential equation in the inhomogeneous bounded region $\Omega^+$. Using
the uniqueness and existence results obtained in by  Werner\cite{Wer1}{-}\cite{Wer2},
the equivalence of the integro-differential equation  to the
initial transmission problem and its unique solvability were shown for special
type right-hand side functions associated with Green's third formula.

Note that the wave scattering problems for the general inhomogeneous
anisotropic case described above can be studied by the variational method incorporated with
the non-local approach
and also by the classical potential method when  the corresponding fundamental
solution is available in an  explicit form.
  However,   fundamental solutions  for second
order elliptic partial differential equations with variable coefficients are not
available in explicit form, in general.
Application of the potential method based on the corresponding Levi function,
which always can be constructed explicitly, leads to Fredholm-Riesz type integral equations but
invertibility of the corresponding integral operators can be proved only for particular cases (see Miranda\cite{Mir}).

Our goal here is to show that the acoustic transmission problems for anisotropic  heterogeneous structures can
be equivalently reformulated as systems of singular {\it localized boundary-domain integral
equations} (LBDIEs)  with the help of a {\it localized harmonic paramerix}
based on the harmonic  fundamental solution, which is a {\it quasi-parametrix}
for the considered PDEs of acoustics,  and to prove that the corresponding
singular {\it localized boundary-domain integral operators} (LBDIO) are invertible
 for an arbitrary value of the frequency parameter.
Beside a pure mathematical interest, these results seem to be
important from the point of view of applications, since LBDIE system can be applied
in constructing  convenient numerical algorithms (cf. Mikhailov \cite{Mik1}, Zhu et al\cite{ZZA1, ZZA2}).
{{The main novelty} of the paper is in application of the singular localized
boundary-domain integral equations method to the problem of acoustic transmission
 through a penetrable, anisotropic, inhomogeneous obstacle.}


The paper is organized as follows. First, after mathematical formulation of the problem,
we  introduce layer and volume  potentials based on a localized harmonic parametrix and derive basic
integral relations in bounded inhomogeneous and unbounded homogeneous anisotropic regions. Then we reduce the transmission problem  under consideration to the localized boundary-domain singular  integral equations   system and
prove  the equivalence theorem for arbitrary values of the frequency parameter,
 which plays a crucial role in our analysis.
Afterwards, applying the Vishik-Eskin approach,
we investigate Fredholm properties of the corresponding matrix LBDIO, containing singular integral operators
over  the interface surface and the bounded region occupied by the inhomogeneous obstacle,
  and prove  invertibility of the LBDIO in appropriate Sobolev-Slobodetskii and Bessel potential spaces.
This invertibility property implies then, in particular, existence and uniqueness results for
the LBDIE system and the corresponding original transmission problem.

 Next, we analyze also an alternative non-local approach based on coupling of variational
and boundary integral equation methods, which reduces the transmission problem for unbounded
composite structure to the variational equation containing a coercive sesquilinear form which
lives on the bounded inhomogeneous region and the interface manifold.
 Both approaches presented in the paper can be applied in the study of similar wave scattering problems for
multi-layer piecewise inhomogeneous anisotropic structures.

Finally, for the readers convenience,  we collected necessary auxiliary material related to classes
of localizing functions, properties of localized potentials and anisotropic radiating potentials
in three brief appendices.



\section{Formulation of the   transmission  problem}
\label{ss2.1}

Let $\Omega^+=\Omega_2$  be a bounded domain  in $\mathbb{R}^3$ with a simply connected boundary  $\partial\Omega_2=S$,
and $\Omega^-=\Omega_1:={\mathbb{R}^3}\setminus \overline{\Omega}_2$.
 For simplicity, we assume that $S\in C^\infty$ if not otherwise stated.
 Throughout the paper $n=(n_1,n_2,n_3)$ denotes the  unit normal vector to $S$  directed
outward  the  domain $\Omega_2$.

We assume that the propagation region of a time harmonic acoustic wave $u^{tot}$ is the whole space  $\mathbb{R}^3$ which consists of an inhomogeneous part $\Omega_2$ and  a   homogeneous  part $\Omega_1$.
Acoustic wave propagation  is governed  by the  uniformly elliptic
 second order scalar partial differential equation
\begin{eqnarray}
\label{2.1-w}
A\,u^{tot}(x)\equiv \partial_{k}\,\big(a_{kj}(x)\, \partial_ {j} u^{tot}(x)\big) +\omega^2\,\kappa(x)\,u^{tot}(x)= f(x),\;\;\;x\in \Omega_2\cup\Omega_1,
\end{eqnarray}
where  $\pa_x=(\pa_1,\pa_2,\pa_3)$, $\pa_j=\pa_{x_j}=\pa/\pa x_j$,
$a_{kj}(x)=a_{jk}(x)$ and $\kappa(x)$ are real-valued functions,
 $\omega\in \mathbb{R}$ is a frequency parameter, while
   $f\in L_{2,comp}(\mathbb{R}^3)$   is the volume force amplitude. Here and in what follows, the Einstein summation  by repeated indices from $1$ to $3$ is assumed.

 Note that in the mathematical model of an inhomogeneous absorbing medium the  function $\kappa$  is complex-valued,
 with nonzero real and imaginary parts, in general
(see, e.g., Colton \& Kress\cite{CK2}, Ch. 8). Here we treat only the case when the  $\kappa$  is a real-valued function
but it should be mentioned that the complex-valued case can be also considered by the approach developed here.

In our further analysis, it is assumed that the real-valued variable coefficients $a_{kj}$ and $ \kappa$ are
constant  in the homogeneous unbounded region
$\Omega_1$ and the following relations hold:
\begin{align}
\begin{array}{l}
\label{2.2-w}
a_{kj}(x)=a_{jk}(x)=\left\{
\begin{array}{lll}
a^{(1)}_{kj} & \text{for}  & x\in {\Omega_1},\\
a^{(2)}_{kj}(x) & \text{for} & x\in {\Omega_2},
\end{array}
\right.
\quad
\kappa(x)=\left\{
\begin{array}{lll}
 \kappa_1>0 & \text{for}& x\in {\Omega_1},\\
\kappa_2(x)>0 & \text{for}  & x\in {\Omega_2},
\end{array}
\right.
\end{array}
\end{align}
where $a^{(1)}_{kj}$ and $\kappa_1$ are constants, while $a^{(2)}_{kj}$ and $\kappa_2$ are smooth function in $\overline{\Omega}_2$,
\begin{align}
\label{2.3-w}
a^{(2)}_{kj}, \;\kappa_2\in C^{\,2}(\overline{\Omega}_2),\;\;\;\;j,k=1,2,3.
\end{align}
Moreover, the matrices $ \mathbf{a}_q  =\big[\,a^{(q)}_{kj}\,\big ]_{k,j=1}^3$
are uniformly positive definite, i.e., there are positive constants $c_1$ and $c_2$   such that
\begin{eqnarray}
\label{2.4-w}
c_1\,|\xi|^2\leq  a^{(q)}_{kj} (x)\, \xi_k\,\xi_j\leq c_2\,|\xi|^2
\qquad \forall\;  x\in \overline{\Omega}_q, \;\; \forall\;\xi\in \mathbb{R}^3, \;\;q=1,2.
\end{eqnarray}
We do not assume that the coefficients $a_{kj}$ and $ \kappa$ are continuous across $S$ in general, i.e., the case
$a^{(2)}_{kj}(x)\neq a^{(1)}_{kj}$ and $\kappa_2(x)\neq   \kappa_1$ for $x\in S$ is covered by our analysis.
Further, let us denote
\begin{align}
\label{2.5-w}
&
A_1 v(x):=  a^{(1)}_{kj}\, \partial_{x_k}\partial_ {x_j} v(x)  +\omega^2  \kappa_1\,v(x) \;\;
\text{for} \;\; x\in \Omega_1 ,\\
\nonumber
&
A_2 v(x):= \partial_{x_k}\,\big(a^{(2)}_{kj}(x)\, \partial_ {x_j} v(x)\big) +\omega^2\,\kappa_2(x)\,v(x) \;\;
  \text{for} \;\;  x\in \Omega_2 .
\end{align}
  We will often write $A_1$ instead of $A_1(\partial_x)$ and $A_2$ instead of $A_2(x,\partial_x)$, when this does not lead to a confusion.
\\
 For a function $v$ sufficiently smooth in $\Omega_1$ and $\Omega_2$, the {\it classical} co-normal derivative operators,  $T_{cq}^\pm $ are well defined as
\begin{align}
\label{2.7-w}
&
   T_{cq}^\pm  v(x) :=  a^{(q)}_{kj}\, n_k(x)\,\gamma^\pm(\partial_ {x_j} v(x)) , \;\;  x\in S,\;\;q=1,2;
\end{align}
here the symbols $\gamma^{+}$ and $\gamma^{-}$ denote one-sided   boundary  trace operators
on $S$ from the interior and exterior domains respectively.
Their continuous right inverse operators, which are non-uniquely defined, are denoted by symbols $(\gamma^\pm)^{-1}$.

By $H^s(\Omega)=H^s_2(\Omega)$, $H^s_{loc}(\Omega)=H^s_{2,\,loc}(\Omega)$,
$H^s_{comp}(\Omega)=H^s_{2,\,comp}(\Omega)$ and $H^s(S)=H^s_2(S)$, $s\in \R$,  we denote the  $L_2$-based
Bessel potential spaces on an open domain $\Omega\subset \mathbb{R}^3$ and on a closed manifold $S$ without boundary,
while $\mathcal{D}(\Omega)$ stands for the space of infinitely differentiable test functions with support in $\Omega$.
Recall that $H^0(\Omega)=L_2(\Omega)$ is a space of square integrable functions in $\Omega$.
  Let the symbol $r_{\Omega}$ denote the restriction operator onto $\Omega$.

 Since the boundary traces of gradients, $\gamma^\pm(\partial_ {x_j} v(x))$ are generally
 not well defined on   functions from $H^1(\Omega_q)$, the classical co-normal derivatives \eqref{2.7-w}
  are not well defined on such functions either, cf. Mikhailov \cite{MikArxiv2015}, Appendix A,
   where an example of such function, for which the classical co-normal derivative exists at no boundary point.
 Let us introduce the following subspaces of $H^1({\Omega_2})$
and $H^1_{loc}({\Omega_1})$
to which the classical co-normal derivatives can be continuously extended, cf., e.g., Grisvard \cite{Grisvard1985}, Costabel \cite{Costabel1988}, Mikhailov \cite{Mik3}:
\begin{align*}
H^{1,\,0}({\Omega_2}; A_2 ):=\{\,v\in H^{1}(\Omega_2)\,:\, A_2v\in H^{0}({\Omega_2} )\,\},
\qquad
H^{1,\,0}_{loc}({\Omega_1}; A_1 ):=\{\,v\in H^{1}_{loc}(\Omega_1)\,:\, A_1v\in H^{0}_{  loc}({\Omega_1} )\,\}\,.
\end{align*}
  We will also use the corresponding spaces with the Laplace operator $\Delta$ instead of $A_q$.

Motivated by  the first Green identity well known for smooth functions,
the  classical  co-normal derivative operators  \eqref{2.7-w} can be extended by
continuity to functions from the spaces $H^{1,\,0}_{loc}(\Omega_1; A_1)$ and $H^{1,\,0}(\Omega_2; A_2)$
giving the  {\it canonical} co-normal derivative operators, $T^\pm_1$ and $T^+_2$, defined in the weak form as
\begin{align}
\label{2.14-w}
 \langle T^+_{q} u \,,\,g\rangle_{S } :=&
\int_{\Omega_2 }
 [ a^{({q})}_{kj}(x)\,\pa_{j}u (x)\; \pa_{k}(\gamma^+)^{-1}g(x)-
\omega^2\kappa_{q}(x) u (x)\,(\gamma^+)^{-1}g(x)]\,dx \nonumber \\
  &+\int_{\Omega_{q} }A_{q} u (x)\;(\gamma^+)^{-1}g (x)\,dx,\qquad u\in H^{1,\,0}(\Omega_2; A_{q}),
  \quad\forall\ g\in H^{\frac{1}{2}}(S),\\
\label{2.15-w1}
 \langle T^-_1 u \,,\,g \rangle_{S } :=&
-\int_{\Omega_1 }[  a^{(1)}_{kj} \,\pa_{j}u (x)\; \pa_{k}(\gamma^-)^{-1}g (x)-\omega^2\kappa_1 u (x)\,  (\gamma^-)^{-1}g (x)]\,dx\nonumber\\
  &-\int_{\Omega_1 }A_1u(x)\,(\gamma^-)^{-1}g (x)\,dx, \qquad u\in H^{1,\,0}_{loc}(\Omega_1; A_1),
  \quad\forall\ g\in H^{\frac{1}{2}}(S),
  \end{align}
  where
$(\gamma^+)^{-1}:H^{\frac{1}{2}}(S)\to H^1(\Omega_2 )$ and
  $(\gamma^-)^{-1}:H^{\frac{1}{2}}(S)\to H^1_{comp}(\Omega_1 )$
 are the right inverse operators to the trace operators $\gamma^\pm$, and
  the   angular brackets $\langle\cdot\,,\,\cdot\rangle_S$ should be understood as duality pairing of
  $H^{-\frac{1}{2}}(S)$ with $H^{\frac{1}{2}}(S)$  which extends the usual bilinear $L_2(S)$ inner product.

The canonical co-normal derivatives  $T^-_2 u$ and $ T^+_1 u$ can be defined analogously for functions from the spaces
$  H^{1,\,0}_{loc}(\Omega_1; A_2)$ and $ H^{1,\,0} (\Omega_2; A_1)$, respectively, provided that the variable coefficients
$a^{(2)}_{kj}(x)$ and $\kappa_2(x)$ are continuously extended from $\Omega_2$ to the whole space $\mathbb{R}^3$ preserving the
smoothness. It is evident that for functions from the space
$H^2(\Omega_2)$ and $H^2_{loc}(\Omega_1)$ the classical and canonical co-normal derivative operators coincide.
Concerning the {\it canonical} and {\it generalized}  co-normal derivatives in  wider functional spaces,   see
Mikhailov \cite{Mik3}.

 For two times continuously differentiable function $w$ in a neighbourhood of $S$,  we employ also the notation
$T_q(x,\pa_x) w:= a^{(q)}_{kj}\, n_k(x)\,(\partial_ {x_j}w(x))$, $x\in S$, to denote the restriction of
$T_q(x,\pa_x) w$ to $S$, which coincides with both the classical and the canonical co-normal derivatives.

Recall that, the definitions of the co-normal derivatives $T^\pm_q$ do not depend on the
choice of  the right  inverse operators $(\gamma^\pm)^{-1}$
  and the following Green's first and second identities hold (cf. Mikhailov \cite{Mik3}, Theorem 3.9),
\begin{align}
\label{2.14-wG}
 & \big\langle T^+_qu \,,\, {\gamma^+ v}\big\rangle_{S} =
\int_{\Omega_2 }
 \big[ a^{(q)}_{kj} \,\pa_{j}u  \; {\pa_{k}v } -
\omega^2\kappa_q  u  \, {v }\,\big]\,dx
 +\int_{\Omega_2 } {v }A_q u  \,dx,\;\;
 \ u\in H^{1,\,0}(\Omega_2; A_q),\ v\in H^{1}(\Omega_2),\;\;q=1,2,
  \\
  \nonumber
 &   \big\langle T^+_2 u \,,\, {\gamma^+ v}\big\rangle_{S}-
\big\langle  { T^+_2 v} \,,\, \gamma^+ u \big\rangle_{S} =
  \int_{\Omega_2 }\big[\, {v }\,A_2 u  -u \, A_2  {v }\,\big]dx,\quad   u,v\in H^{1,\,0}(\Omega_2; A_2),
   \\
\label{2.14-wG1}
 &\big\langle T^-_1 u \,,\, {\gamma^- v}\big\rangle_{S}=
-\int_{\Omega_1 }
 \big[\, a^{(1)}_{kj}\,\pa_{j}u  \,  {\pa_{k}v }-
\omega^2\kappa_1 u  \, {v }\,\big]\,dx
 -\int_{\Omega_1} {v }\,A_1u  \,dx,\quad
 \ u\in H^{1,\,0}_{loc}(\Omega_1; A_1),\ v\in H^{1}_{comp}(\Omega_1).
 \end{align}

By  $Z({\Omega_1})$ we denote  a sub-class  of complex-valued functions from $H^{1}_{loc}(\Omega _1)$
satisfying the Sommerfeld radiation conditions at infinity (see Vekua \cite{Vek},
Colton \& Kress \cite{CK2}  for the Helmholtz operator and
 Vainberg \cite{Vain}, Jentsch et al \cite{JN2} for the ``anisotropic'' operator
$A_1$ defined by \eqref{2.5-w}).
 Denote by $S_{\om}$ the characteristic surface (ellipsoid) associated with the operator
$A_1$,
 \[
 a^{(1)}_{kj}\xi_k\,\xi_j- \om^2 \kappa_1 =0,\;\;\;\xi\in\R^3.
\]
For an arbitrary vector $\eta\in\R^3$ with  $|\eta|=1$
there exists only one point $\xi(\eta)\in S_{\om}$ such that the outward unit
normal vector $n(\xi(\eta))$ to $S_{\om}$  at the point  $\xi(\eta)$ has the
same direction as $\eta$, i.e., $n(\xi(\eta))=\eta$.
Note that  $\xi(-\eta)=-\xi(\eta)\in S_{\om}$  and $n(-\xi(\eta))=-\eta. $
It can easily be verified that
\be
\label{2.11-w}
\xi(\eta)=\om\,  \kappa_1 ^{1/2} \,( {\bf a}_1^{-1}  \eta\cdot \eta)^{-1/2} \,\,
 {\bf a}^{-1}_1  \eta,
\ee
where   ${\bf a}_1^{-1}$ is the matrix inverse to $ {\bf a}_1:=  \big[\,a^{(1)}_{kj}\,\big ]_{k,j=1}^3$.

\begin{definition}
A  complex-valued function $v$ belongs to the class $Z(\Omega_1)$
if there   exists a ball $B(R)$ of radius $R$ centered at the origin  such that
$v\in C^1(\Omega_1\setminus B(R))$, and
$v$ satisfies the Sommerfeld radiation conditions associated with the operator $A_1(\partial)$
for sufficiently large $|x|$,
\be
\label{2.12-w}
v(x)=\mathcal{O}(|x|^{-1}),\quad
\pa_k v(x) -  i \xi_k(\eta)v(x)=\mathcal{O}(|x|^{-2}), \quad k=1,2,3,
\ee
where  $\xi(\eta)\in S_{\omega}$ corresponds to the
vector $\eta=x/|x|$ (i.e., $\xi(\eta)$ is given by \eqref{2.11-w} with $\eta=x/|x|$).
\end{definition}
Notice that due to the ellipticity of the operator $A_1(\pa_x)$, any solution to the constant coefficient
homogeneous equation $A_1(\pa_x)v(x)=0$ in an open region $\Omega\subset \mathbb{R}^3$  is a real analytic function
of $x$ in $\Omega$.

Conditions \eqref{2.12-w} are equivalent to the classical
Sommerfeld radiation conditions for the Helmholtz equation if $A_1(\partial)=\Delta(\pa)+\omega^2$,
i.e., if  $\kappa_1=1$ and  $a^{(1)}_{kj} =\delta_{kj}$, where $\delta_{kj}$ is the Kronecker delta.
 There  holds the following analogue of the classical Rellich-Vekua lemma
(for details see Jentsch et al \cite{JN2}, Natroshvili et al \cite{NKT}).
\begin{lemma}
\label{L2.1}
Let $  v \in Z({\Omega_1})$ be a solution of the equation $A_1(\partial_x) v =0$ in ${\Omega_1}$
and let
\begin{align}
\label{2.13-w}
\lim_{R\to+\infty} \mbox{\rm Im}\;\Big\{
\int_{\Sigma_R}\ov{v(x)}\;T_1(x,\pa_x)v(x)\,d\Sigma_R\Big\}=0,
\end{align}
where $\Sigma_R$ is the sphere  with radius $R$ centered at the origin.
Then $  v =0$ in ${\Omega_1}$.
\end{lemma}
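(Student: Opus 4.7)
The plan is to reduce the anisotropic Rellich--Vekua statement to its classical counterpart for the Helmholtz operator by a linear change of variables that diagonalises the principal symbol of $A_1$. Since $\mathbf{a}_1=[a^{(1)}_{kj}]$ is symmetric and positive definite, set $\mathbf{B}:=\mathbf{a}_1^{-1/2}$, $y:=\mathbf{B}x$, and $w(y):=v(\mathbf{B}^{-1}y)$. A direct computation with the chain rule yields
\[
A_1(\pa_x)v(x)=(\Delta_y+\omega^2\kappa_1)w(y),
\]
so $w$ solves the standard Helmholtz equation with wavenumber $k=\omega\sqrt{\kappa_1}$ in the image domain $\mathbf{B}(\Omega_1)$, which is still the exterior of a bounded set in $\R^3$.

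Next I would verify that the two hypotheses on $v$ transform into their classical analogues for $w$. Using the explicit formula \eqref{2.11-w}, one checks that $\hat x\cdot \xi(\hat x)=\omega\kappa_1^{1/2}(\mathbf{a}_1^{-1}\hat x\cdot \hat x)^{1/2}=k|\mathbf{B}\hat x|\cdot\frac{|y|}{|x|}|_{y=\mathbf{B}x}$, and more generally that the anisotropic Sommerfeld conditions \eqref{2.12-w} for $v$ reduce, after the substitution, to the classical Sommerfeld conditions $w(y)=\mathcal O(|y|^{-1})$, $\pa_{|y|}w(y)-ikw(y)=\mathcal O(|y|^{-2})$ for $w$. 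Likewise the co-normal derivative $T_1(x,\pa_x)v=a^{(1)}_{kj}n_k\pa_{x_j}v$ together with the surface measure $d\Sigma_R$ transforms, up to a positive Jacobian factor, into the standard normal flux $\pa_{\nu_y}w\, dS_y$ on the image ellipsoid $\widetilde\Sigma_R:=\mathbf{B}(\Sigma_R)$. Hence hypothesis \eqref{2.13-w} becomes
\[
\lim_{R\to+\infty}\mathrm{Im}\int_{\widetilde\Sigma_R}\ov{w(y)}\,\pa_{\nu_y}w(y)\,dS_y=0.
\]

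Having replaced the ellipsoidal surface $\widetilde\Sigma_R$ by a concentric sphere of comparable radius (see below), the hypothesis matches the exact setting of the classical Rellich--Vekua lemma applied to the Helmholtz solution $w$: the vanishing imaginary flux forces $w\equiv 0$ in the exterior of every sufficiently large ball. Real analyticity of solutions of the constant-coefficient elliptic equation $A_1 v=0$ (noted right after Definition of the class $Z(\Omega_1)$) then propagates this zero all the way through the connected region $\Omega_1$, which gives $v\equiv 0$ in $\Omega_1$.

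The main obstacle I anticipate is the justification that the limiting imaginary flux over $\widetilde\Sigma_R$ coincides with that over an enclosing sphere $\{|y|=R'\}$. Intuitively this is true because, for radiating solutions, the flux picks up only the far-field energy and is independent of the exhausting family of surfaces; rigorously, one applies Green's identity for $\Delta+k^2$ in the region bounded between the ellipsoid and the sphere, uses the Sommerfeld condition together with the $\mathcal O(|y|^{-1})$ decay to bound the difference of the two fluxes by an $o(1)$ term as $R\to\infty$, and concludes that both imaginary limits agree. Once this equivalence is in place the classical Rellich--Vekua lemma completes the argument.
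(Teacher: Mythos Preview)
The paper does not prove Lemma~\ref{L2.1}; it cites Jentsch et al.\ and Natroshvili et al.\ for the details. What the paper does supply is Remark~\ref{r2.2-1}, which indicates a route quite different from yours: working directly in the original $x$-coordinates and using the anisotropic Sommerfeld condition \eqref{2.12-w} together with formula \eqref{2.11-w}, one computes
\[
\overline{v(x)}\,T_1(x,\pa_x)v(x)=i\,\omega\kappa_1^{1/2}(\mathbf a_1^{-1}\eta\cdot\eta)^{-1/2}\,|v(x)|^2+\mathcal O(|x|^{-3}),\qquad \eta=x/|x|,
\]
on $\Sigma_R$, so hypothesis \eqref{2.13-w} is equivalent to the classical Rellich condition $\lim_{R\to\infty}\int_{\Sigma_R}|v|^2\,d\Sigma_R=0$, after which the standard Rellich--Vekua argument applies.

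Your reduction by the linear change of variables $y=\mathbf a_1^{-1/2}x$ is a genuinely different and equally valid strategy. It trades the one-line asymptotic calculation above for a structural reduction to the Helmholtz case; the price is that you must track how the Sommerfeld condition, the co-normal flux, and the surface measure transform, which you sketch but do not fully verify. These verifications do go through (in particular $(\mathbf a_1^{1/2}\hat y)\cdot\xi(\hat x)=\omega\kappa_1^{1/2}=k$, giving exactly $\pa_{|y|}w-ikw=\mathcal O(|y|^{-2})$), so your outline is sound. Regarding your stated obstacle: it is a non-issue. Green's second identity for $\Delta+k^2$ in the shell between the ellipsoid $\widetilde\Sigma_R$ and a sphere gives $\int(\overline{w}\,\pa_\nu w-w\,\pa_\nu\overline{w})\,dS=0$ since $\overline{w}\Delta w-w\Delta\overline{w}=0$ for real $k$; hence the imaginary part of the flux is exactly surface-independent and no asymptotic estimate is needed.
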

\begin{remark}
\label{r2.2-1}
For $x\in \Sigma_R$ and $\eta=x/|x|$ we have $n(x)=\eta$ and in view of  \eqref{2.7-w} and \eqref{2.12-w} for a
function  $v\in Z({\Omega_1})$   we get
$$
 T_1(x,\pa_x)  v (x)= a^{(1)}_{kj}n_k(x)\,[\,i\,\xi_j(\eta)\,  v (x)]+\mathcal{O}(|x|^{-2})=
i\,a^{(1)}_{kj}\eta_k\, \xi_j(\eta)\, v (x)+\mathcal{O}(|x|^{-2})\,.
$$
Therefore, by \eqref{2.11-w}  and  the symmetry condition   $a_{kj}=a_{jk}$,   we arrive at the relation
\[
\overline{  v (x)}\, T_1 (x,\pa)  v (x)= i \,\omega  \kappa_1^{1/2}\,| v (x)|^2\,( {\bf a}_1^{-1}  \eta\cdot
\eta)^{-1/2} \,{\bf a} _1 \eta
\cdot  {\bf a}^{-1} \eta +\mathcal{O}(|x|^{-3})
=i\,\omega  \kappa_1^{1/2}\,( {\bf a}^{-1}_1  \eta\cdot \eta)^{-1/2}\,|  v (x)|^2 +\mathcal{O}\big(|x|^{-3}\big),
\]
On the other hand,  matrix ${\bf a}_1$ is  positive definite, cf. \eqref{2.4-w}, which implies positive definiteness of the inverse matrix ${ \bf a}^{-1}_1$.
Hence there are positive  constants $\delta_0$ and $\delta_1$ such that the inequality
$0<\delta_0\leqslant  ({ \bf a}^{-1}_1  \eta\cdot \eta)^{-\frac{1}{2}}\leqslant  \delta_1< \infty$ holds  for all $\eta \in \Sigma_1$.
 Consequently, \eqref{2.13-w} for $\omega\neq 0$ is equivalent to the condition in the well
known Rellich-Vekua lemma in the theory of the Helmholtz equation,
Vekua \cite{Vek}, Rellich \cite{Rell},  Colton \& Kress \cite{CK2},
\[
\lim_{R\to+\infty}
\int_{\Sigma_R} |v(x)|^2\,d\Sigma_R = 0.
\]
\end{remark}

In the unbounded region ${\Omega_1}$, we have a total wave field $u^{  tot}=u^{  ins}+u^{\rm sc}$, where
 $u^{  inc}$ is a wave motion initiating  known incident field and $u^{ sc}$ is a radiating unknown
scattered field.   It is often  assumed that the incident field is defined in the whole of ${\mathbb{R}}^3$,
being  for example
a corresponding plane wave which solves the homogeneous equation $A_1 u^{inc}=0$ in ${\mathbb{R}}^3$
but does not satisfy the Sommerfeld radiation conditions at infinity.
Motivated by relations \eqref{2.2-w}, let us set $u_1(x):=u^{sc}(x)$ for $x\in \Omega_1$ and
 $u_2(x):=u^{tot}(x)$ for $x\in \Omega_2$.

Now we formulate the transmission  problem associated with the time-harmonic  acoustic wave scattering by a bounded  anisotropic inhomogeneity  embedded in an unbounded
 anisotropic   homogeneous medium:

  \textit{Find complex-valued  functions $u_1\in H^{1,\,0}_{loc}({\Omega_1},A_1)\cap Z(\Omega_1)$ and
  $u_2\in   H^{1,\,0}({\Omega_2},A_2)$
 satisfying the differential equations
\begin{align}
\label{2.16-w}
&
  A_1u_1(x)=f_1(x) \;\;
\text{for} \;\; x\in \Omega_1,\\
\label{2.17-w}
&
 A_2u_2(x)=f_2(x) \;\;
  \text{for} \;\;  x\in \Omega_2 ,
\end{align}
and the transmission conditions on the interface $S$,
\begin{align}
\label{2.18-w}
&
\gamma^+u_2 -\gamma^-u_1=\varphi_{_0}      \;\;\text{on} \;\; S,
\\
\label{2.19-w}
&
 T^+_2 u_2 -   T^-_1  u_1 =\psi_{_0}      \;\;\text{on} \;\; S,
\end{align}
where
\begin{align}
\label{2.20-w1}
\begin{array}{c}
f_2:=r_{ \Omega_2}  f\in H^0(\Omega_2), \;\;\;
f_1:=r_{ \Omega_1} f\in H^0_{comp}({\Omega_1}),\;\;\;f\in H^0_{comp}(\mathbb{R}^3),
\;\;\;
 \varphi_{_0}\in H^{\frac{1}{2}}(S),  \;\;
\psi_{_0}\in H^{-\frac{1}{2}}(S).
\end{array}
\end{align}
}
In the above setting, equations \eqref{2.16-w} and \eqref{2.17-w} are understood in the distributional sense,
the Dirichlet type transmission condition \eqref{2.18-w} is understood in the usual trace sense, while
  the Neumann type transmission condition   \eqref{2.18-w} is understood  in the canonical co-normal derivative
   sense defined by the relations \eqref{2.14-w}-\eqref{2.15-w1}.\\
  If the interface continuity of $u^{tot}$ and its co-normal derivatives is assumed, then $\varphi_{_0}=  \gamma^- u^{inc} $, $\psi_{_0}= T^-_1 u^{inc}$.

\begin{remark}
\label{r2.2}
If the variable coefficients $a_{kj}$ and the function $\kappa$ in \eqref{2.1-w} and \eqref{2.2-w} belong to $C^2({\mathbb{R}}^3)$
and  $u^{inc}\in H^2_{loc}(\R^3)$, then
conditions \eqref{2.18-w} and \eqref{2.19-w} can be reduced to the homogeneous ones by introducing
a new unknown function   $\widetilde{u}:=u^{tot}-u^{inc}$  in $ \R^3$,
since $T^-_1u^{inc}=T^+_2u^{inc}$  on $S$.
For the  function   $\widetilde{u}$,
the above formulated  transmission problem is reduced then to the following one:
 \\
  \textit{Find a solution $\widetilde{u}\in  H^{2}_{loc}({\mathbb{R}}^3)\cap Z({\mathbb{R}}^3)$
to the differential equation
\begin{eqnarray}
\label{2.2-ww}
A\,\widetilde{u}(x)\equiv \partial_{x_k}\,\big(a_{kj}(x)\, \partial_ {x_j} \widetilde{u}(x)\big)
+\omega^2\,\kappa(x)\,\widetilde{u}(x)= \widetilde{f}(x),\;\;\;x\in {\mathbb{R}}^3,
\end{eqnarray}
where $\widetilde{f}:= f-Au^{inc}\in H^{0}_{comp}({\mathbb{R}}^3)$ due to the inclusions  $f\in H^{0}_{comp}({\mathbb{R}}^3)$ and $Au^{inc}=A_1u^{inc}=0$ in ${\Omega_1}$.
}

If $A\equiv \Delta+\omega^2\,\kappa(x)$ in ${\mathbb{R}}^3$ with $\kappa$ as in
\eqref{2.2-w}, then equation \eqref{2.2-ww} can be
 equivalently reduced to the Lippmann-Schwinger type integral equation (see, e.g. Colton \& Kress \cite{CK2}, Ch.8).

In our analysis, even for $C^2({\mathbb{R}}^3)$-smooth coefficients
we always will keep the transmission conditions \eqref{2.18-w}--\eqref{2.19-w} which
allow us to reduce the problem under consideration to the system of
localized boundary-domain integral
equations which live on the bounded domain $\Omega_2$
and its boundary $S$ (cf. N{\'e}d{\'e}lec \cite{Ned}, Ch. 2).
\end{remark}

Let us prove the uniqueness theorem for the transmission problem.
\begin{theorem}
\label{T2.4}
The homogeneous transmission problem \eqref{2.16-w}--\eqref{2.19-w} (with  $f_1=0,$ $f_2=0,$ $\varphi_0=\psi_0=0$)
possesses only the trivial solution.
\end{theorem}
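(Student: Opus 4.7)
The plan is to combine Green's first identity applied in both $\Omega_2$ and a bounded truncation of $\Omega_1$, cancel the interface boundary terms via the homogeneous transmission conditions, pass to the limit at infinity to isolate a Rellich--Vekua type decay condition on $u_1$, and then recover the vanishing of $u_2$ by a unique continuation argument. First I would apply \eqref{2.14-wG} in $\Omega_2$ with $v=\overline{u_2}$, and apply the analogue of \eqref{2.14-wG1} on the truncated exterior domain $\Omega_{1,R}:=\Omega_1\cap B(R)$ with $v=\overline{u_1}$, where $B(R)$ is the ball of radius $R$ centered at the origin. Using $A_qu_q=0$, adding the two identities, and canceling the two interface pairings on $S$ by virtue of $\gamma^+\overline{u_2}=\gamma^-\overline{u_1}$ and $T_2^+u_2=T_1^-u_1$, I would arrive at
\[
\int_{\Omega_2}\bigl[a^{(2)}_{kj}\pa_ju_2\,\pa_k\overline{u_2}-\omega^2\kappa_2|u_2|^2\bigr]dx+\int_{\Omega_{1,R}}\bigl[a^{(1)}_{kj}\pa_ju_1\,\pa_k\overline{u_1}-\omega^2\kappa_1|u_1|^2\bigr]dx=\int_{\Sigma_R}\overline{u_1}\,T_1(x,\pa_x)u_1\,d\Sigma_R.
\]
Since $a^{(q)}_{kj}=a^{(q)}_{jk}$ and both $a^{(q)}_{kj}$ and $\kappa_q$ are real, the two volume integrands are pointwise real, so taking imaginary parts and letting $R\to\infty$ yields exactly the hypothesis \eqref{2.13-w} of Lemma \ref{L2.1}.

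For $\omega\neq 0$, Lemma \ref{L2.1} then gives $u_1\equiv 0$ in $\Omega_1$, and the homogeneous transmission conditions immediately imply $\gamma^+u_2=0$ and $T_2^+u_2=0$ on $S$. To finish, I would extend $u_2$ by zero to the whole of $\mathbb{R}^3$; the vanishing Dirichlet trace ensures the extension $\widetilde{u}_2$ belongs to $H^1_{loc}(\mathbb{R}^3)$, while the vanishing canonical co-normal derivative together with Green's identity shows that $\widetilde{u}_2$ is a weak solution of a uniformly elliptic equation on $\mathbb{R}^3$, after the coefficients $a^{(2)}_{kj}$ and $\kappa_2$ have been $C^2$-extended off $\overline{\Omega_2}$ as permitted by \eqref{2.3-w}. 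Since $\widetilde{u}_2$ vanishes on the open set $\Omega_1$, a classical unique continuation theorem for second-order elliptic operators with smooth coefficients yields $\widetilde{u}_2\equiv 0$ in $\mathbb{R}^3$, hence $u_2\equiv 0$ in $\Omega_2$. The degenerate case $\omega=0$ is handled by taking the real part of the same identity instead: the volume integrands are now pointwise nonnegative by \eqref{2.4-w}, the surface term on $\Sigma_R$ tends to zero because $u_1=\mathcal{O}(|x|^{-1})$ and $T_1u_1=\mathcal{O}(|x|^{-2})$, so $\nabla u_q$ vanishes in each domain and the decay of $u_1$ at infinity forces $u_1\equiv 0$, after which $u_2\equiv 0$ as before.

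The main obstacle I expect is handling the identity above at the regularity level of the problem, where $u_1\in H^{1,\,0}_{loc}(\Omega_1;A_1)$ and $u_2\in H^{1,\,0}(\Omega_2;A_2)$: the trace $T_1^-u_1$ is merely a canonical co-normal derivative, so both interface integrals must be interpreted as the duality pairings $\langle\cdot,\cdot\rangle_S$ defined in \eqref{2.14-w}--\eqref{2.15-w1}, and the Green identity over the non-Lipschitz domain $\Omega_{1,R}$ must be justified by testing \eqref{2.14-wG1} against a cutoff $\chi_R\,\overline{u_1}$ with $\chi_R\in\mathcal{D}(\mathbb{R}^3)$ equal to one on $B(R-1)$ and supported in $B(R)$. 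A secondary subtlety is that, because $u_1$ and $\nabla u_1$ are only $\mathcal{O}(|x|^{-1})$, the exterior volume integrand need not be absolutely integrable on the whole of $\Omega_1$ as $R\to\infty$; taking imaginary parts sidesteps this since only the boundary integral on $\Sigma_R$ carries information in the limit, and its convergence is controlled by the radiation conditions \eqref{2.12-w} and the asymptotic formula derived in Remark \ref{r2.2-1}.
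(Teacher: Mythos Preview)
Your proposal is correct and follows essentially the same route as the paper: combine Green's first identities on $\Omega_2$ and $\Omega_1\cap B(R)$, cancel the interface terms via the homogeneous transmission conditions, take imaginary parts to invoke Lemma~\ref{L2.1} and conclude $u_1=0$, and finish with unique continuation for $u_2$. The paper differs only cosmetically---it cites classical Cauchy-problem uniqueness (Landis, Calder\'on) directly after noting elliptic regularity gives $u_2\in C^2(\overline{\Omega}_2)$, rather than extending $u_2$ by zero to $\mathbb{R}^3$, and it does not single out the case $\omega=0$.
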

\begin{proof} Denote by $B(R)$ a ball centred at the origin and radius $R$, $\Sigma_R:=\pa B(R)$.
We assume that $R$ is   a   sufficiently large positive number such that $\overline{\Omega}_2\subset B(R)$.
Let a pair $ (u_1, u_2)$ be a solution to the homogeneous transmission problem \eqref{2.16-w}--\eqref{2.19-w}.
Note that $u_1\in C^{\infty}(\Omega_1)$  due to  ellipticity of the constant coefficient operator $A_1$.
We can write  the first Green identities for the domains
${\Omega_2}$ and ${\Omega_1}(R) := {\Omega_1}\cap B(R)$  (see \eqref{2.14-wG} and \eqref{2.14-wG1}),
\begin{align}
&
\label{2.87-w}
\int_{{\Omega_2} }
  [a^{(2)}_{kj}(x)\,\pa_{j}u_2 (x)\; \overline{\pa_{k}  u_2(x)}-
  \omega^2\kappa_2(x) |u_2 (x)|^2]\,dx= \langle {  T^+_2}u_2 \,,\,\overline{\gamma^+u_2}\rangle_{S },\\[2mm]
  &
\label{2.88-w}
\int_{{\Omega_1}(R) }
  [a^{(1)}_{kj} \,\pa_{j}u_1 (x)\; \overline{\pa_{k}u_1 (x)}- \omega^2{ \kappa_1}|u_1 (x)|^2]\,dx
  = -\langle {  T^-_1}u_1\,,\,\overline{\gamma^-u_1} \rangle_{S }
  +\langle T^+_1u_1 \,,\,\overline{\gamma^-u_1}\rangle _{\Sigma(R)}.
  \end{align}
Since the matrices ${\bf a}_q=[a^{(q)}_{kj}]_{k,j=1}^3$ are symmetric and positive definite, in view of the homogeneous transmission conditions \eqref{2.18-w} and \eqref{2.19-w}, after adding \eqref{2.87-w} and \eqref{2.88-w} and taking the imaginary part,  we get
\[
\mbox{\rm Im}\;\Big\{
\int_{\Sigma_R}\ov{u_1(x)}\,T_{1}(x,\pa_x) u_1(x)\,d\Sigma_R\Big\}=0.
\]
Whence by Lemma \ref{L2.1} we deduce that $u_1=0$ in ${\Omega_1}$. In view of
\eqref{2.18-w}--\eqref{2.19-w} then we see that the function $u_2$ solves the homogeneous
Cauchy  problem in ${\Omega_2}$ for the elliptic partial differential
equation $A_2u_2=0$ with variable coefficients $a^{(2)}_{kj}$  and
$ \kappa_2$ being $C^2(\overline{\Omega}_2)$-smooth functions, see \eqref{2.3-w}.
By the interior and boundary regularity properties of solutions to elliptic problems we have
$u_2\in C^2(\overline{\Omega}_2)$ and
therefore $u_2=0$ in ${\Omega_2}$ due to the well known uniqueness theorem for the Cauchy problem (see, e.g.,
Landis \cite{Land}, Theorem 3; Calderon\cite{Cal},  Theorem 6).
\end{proof}
\begin{remark}
Due to the recent results concerning the Cauchy problem for scalar elliptic operators
one can reduce the smoothness of coefficients $a^{(2)}_{kj}$ and $ \kappa_2$ to the Lipschitz continuity and
require that ${\Omega_2}$ is a Dini domain,
see, e.g., Theorem 2.9 in Tao et al \cite{TZ}.
\end{remark}

\section{Reduction to   LBDIE  system  and equivalence theorem}
\label{ss2.2}
 \subsection{Integral relations in  the nonhomogeneous bounded domain} 

As it has already been mentioned, our goal is to reduce the above stated transmission problem
to the corresponding system of localized boundary-domain integral equations.
To this end let us define a \textit{localized parametrix} associated with  the fundamental solution
$-(\,4\,\pi\,|x|\,)^{-1}$ of the Laplace operator,
 \[
     P_{\chi} (x):=-\frac{\chi (x)}{4\,\pi\,|x|},
 \]
where $\chi$ is a cut off function $\chi \in X^4_+$,
see Appendix \ref{Appendix A}.
Throughout the paper we assume that this  condition
is satisfied  and
${\chi}$ has a compact support if not otherwise stated.

Let us consider Green's second identity for functions $u_2,v_2\in H^{1,\,0}(\Omega_2; A_2)$,
\[
\int _{\Omega_2{ (y,\varepsilon)}} \big(v_2 A_2 u_2 - u_2 A_2 v_2\big) \,dx
= \big\langle T^+_2 u_2,  \gamma^+v_2\big\rangle_{\partial\Omega_2(y,\varepsilon)}
- \big\langle\gamma^+u_2,T^+_2  v_2\big\rangle_{\partial\Omega_2(y,\varepsilon)}
\]
 where ${\Omega_2}(  y, \varepsilon):={\Omega_2}\setminus B(y,\varepsilon)$ with $B(y,\varepsilon)$ being
 a ball centred at the point $y\in {\Omega_2}$ with radius $\varepsilon>0$.
Substituting for $v_2(x)$ the parametrix $P_\chi(x-y)$, by
standard limiting arguments as $\varepsilon\to 0$ one can derive  Green's third identity
for $u\in H^{1,\,0}({\Omega_2},A_2)$ (cf. Chkadua et al \cite{CMN6}),
\begin{eqnarray}
\label{2.23-w}
 \beta\,u_2 + {\cal N}_\chi \,u_2- V_\chi T^+_2u_2 + W_\chi\gamma^+u_2
= {\cal P}_\chi  A_2u_2 \;\;\;\; \mbox{in}\;\;{\Omega_2},
\end{eqnarray}
where
\begin{align}
\label{2.32-w}
\dst
 \beta(y) =\frac{1}{3}\;\big[\,a^{(2)}_{11}(y)+a^{(2)}_{22}(y)+a^{(2)}_{33}(y)\,\big],
 \end{align}
$ {\cal N}_\chi $ is a singular localized integral operator which is understood in the Cauchy principal value sense,
 \begin{align}
\label{2.24-w}
\dst
{\cal N}_\chi\,u_2(y):=& {\rm v.p.}  \int _{{\Omega_2}  } [ A_2 (x,\pa_x)  P_\chi (x-y) ]  u_2(x) \,dx
=\lim\limits_{\varepsilon \to 0}\int _{{\Omega_2}(  y, \varepsilon)} [ A_2 (x,\pa_x)  P_\chi (x-y) ]\, u_2(x) \,dx,
\;\;y\in \mathbb{R}^3,
\end{align}
$ V_\chi$, $  W_\chi $, and $ {\cal P}_\chi  $
are the localized single layer, double layer, and Newtonian volume potentials respectively,
\begin{eqnarray}
\label{2.25-w}
 &
 \dst
  V_\chi \,g(y):=-\int _{S}  P_\chi ( x-y)\, g(x)\,dS_x,
\qquad
  W_\chi  \,g(y):=-\int _{S}\big[\,T_2(x, \partial_x)\, P_\chi (x-y)\,\big]\, \, g(x)\,dS_x,
 \;\; \;\;y\in\mathbb{R}^3\setminus S, \\
\label{2.27-w}
&
 \dst
 {\cal P}_{\chi}\,h(y):= \int _{{\Omega_2}}   P_\chi (x-y)\,h(x)\,dx, \;\;\;\;y\in\mathbb{R}^3.
\end{eqnarray}
{\bl Note that if $P_\chi$ is replaced with the corresponding fundamental solution, then ${\cal N}_\chi u_2=0$, $\beta=1$, and the third Green identity reduces to the familiar integral representation formula.}

 If the domain of integration in  \eqref{2.24-w} and  \eqref{2.27-w} is the whole
space ${\mathbb{R}}^3$, we employ the notation
\begin{eqnarray}
\label{2.28-w}
\dst
  {\bf N}_\chi \,h(y):= { \rm v.p.} \int _{\mathbb{R}^3} [\,A_2(x,\pa_x)  P_\chi (x-y)\,]\, h(x) \,dx\,,\;\;\;\;
  {\bf P}_\chi \,h(y):= \int _{\mathbb{R}^3}  P_\chi (x-y)\,h(x)\,dx,
\end{eqnarray}
 where the operator $A_2(x,\pa_x)$ in the first integral in \eqref{2.28-w} is assumed to be extended to the whole $\R^3$.
 Some mapping  properties of the above potentials needed  in our analysis are collected in  Appendix \ref{Appendix B}.

In view of the following distributional equality
\[
\dst
 \frac{\pa^2}{\pa x_k\,\pa x_j}\,\frac{1}{|x-y|}=
 - \frac{4\,\pi\, \delta_{kj}}{3}\;\delta(x-y)+ {\rm{v.p.}} \,\frac{\pa^2}{\pa x_k\,\pa x_j}\frac{1}{|x-y|},
\]
where $\delta_{kj}$ is the Kronecker delta and $\delta(\,\cdot\,)$ is the Dirac distribution,
we have (again in the distributional sense)
\begin{align}
\dst
A_2(x,\pa_x) P_\chi (x-y)&=a^{(2)}_{kj}(x)\frac{\pa^2   P_\chi (x-y)}{\pa x_k\,\pa x_j} +
 \frac{a^{(2)}_{kj}(x)}{\pa x_k}\,\frac{\pa  P_\chi (x-y)}{ \pa x_j}+ \omega^2\kappa_2(x)  P_\chi (x-y) \nonumber\\
  &=  \beta(x) \;\delta(x-y)+ {\rm{v.p.}}\;A_2 (x,\pa_x) P_\chi (x-y),
 \label{2.30-w}
 \end{align}
where
\begin{align}
 \label{2.35-w}
 \dst
 {\rm{v.p.}}\,A_2(x,\pa_x)  P_\chi (x-y)&=   {\rm{v.p.}} \Big[-\frac{a^{(2)}_{kj}(x)}{4\,\pi}
  \frac{\pa^2}{\pa x_k\,\pa x_j}\,\frac{1}{|x-y|}\Big]
 +R(x,y)
 \dst
 =   {\rm{v.p.}} \Big[-\frac{a^{(2)}_{kj}(y)}{4\,\pi} \frac{\pa^2}{\pa x_k\,\pa x_j}\,\frac{1}{|x-y|}\Big]
  +\widetilde{R} (x,y),
\end{align}
\begin{align*}
 &
 \dst
 R(x,y):= -\frac{1}{4\,\pi}\,\Big\{
 \frac{\pa}{\pa x_k}\,\Big[ \frac{\pa \chi(x-y)}{ \pa x_j}\,\frac{a^{(2)}_{kj}(x)}{|x-y|}\,\Big]
 + \frac{\pa \big[ a^{(2)}_{kj}(x) \, \chi(x-y)\big]}{\pa x_k}\, \,\frac{\pa}{ \pa x_j}\frac{1}{|x-y|}  \nonumber\\
 &
 \hskip17mm
 \dst
 +a^{(2)}_{kj}(x) \, \big[\chi(x-y)-1\big]\, \frac{\pa^2}{\pa x_k \pa x_j}\frac{1}{|x-y|} \Big\}
 + \omega^2\kappa_2(x)  P_\chi (x-y)\,,\\
&
 \dst
  \widetilde{R} (x,y):=R(x,y)-\frac{a^{(2)}_{kj}(x)-a^{(2)}_{kj}(y)}{4\,\pi} \frac{\pa^2}{\pa x_k\,\pa x_j}\,\frac{1}{|x-y|}\,.
 \end{align*}
Since  $\chi(0)=1$,  the functions $R(x,y)$ and $\widetilde{R}(x,y)$ possess  weak singularities of type
${\cal O}(|x-y|^{-2})$ as $x\to y$. However, the whole term ${\rm{v.p.}}\,A_2(x,\pa_x)  P_\chi (x-y)$ possesses the strong Cauchy singularity as $x\to y$. Thus, although $P_\chi$ is a parametrix for the Laplace operator, it is not a parametrix for the operator $A_2$, and we will call it instead a {\it quasi-parametrix} for $A_2$.

It is evident that if $a^{(2)}_{kj}(x)=a_2(x)\delta_{kj}$, then the  terms in square brackets in
formula \eqref{2.35-w} vanish  and ${\rm{v.p.}}\,A_2(x,\pa_x) P_\chi (x-y)$ becomes
 a weakly singular kernel.

Using the integration by parts formula   in \eqref{2.24-w},  one can easily derive the following relation for
$u_2\in H^1({\Omega_2})$
\begin{eqnarray}
\label{2.35-ww-1}
&&
  {\cal N}_\chi \,u_2=-\beta\,u_2 -   W_\chi \gamma^+u_2 +  {\cal Q}_\chi \,u_2 \;\;\;\;
\mbox{in}\;\;\Omega_2,
\end{eqnarray}
where
\begin{eqnarray}
\label{2.35-ww-2}
 {\cal Q}_\chi  \,u_2(y):=-\int _{{\Omega_2}} a^{(2)}_{kl}(x)\,\frac{\pa   P_\chi (x-y)}{\pa x_l}\;
 \frac{\pa u_2(x)}{\pa x_k}\,dx=
\pa _{y_l} \, {\cal P}_\chi  \big( a^{(2)}_{kl}\, \pa_k u_2\big)(y)\,, \;\;\;\; \forall\,y\in {\Omega_2}.
\end{eqnarray}
From  Green's third identity \eqref{2.23-w} and  Theorem  \ref{tB.4}
we deduce
\begin{eqnarray}
\label{2.35-ww-3}
\beta\,u_2 + {\cal N}_\chi  \,u_2\in H^{1,\,0}({\Omega_2},\Delta) \;\;\text{for}\;\;u_2\in H^{1,\,0}({\Omega_2},A_2),
\end{eqnarray}
 which, in turn, along with  relations \eqref{2.35-ww-1} and \eqref{2.35-ww-2} implies
\[
  {\cal Q}_\chi \,u_2= \pa _{y_l} \, {\cal P}_\chi \big( a^{(2)}_{kl}\, \pa_k u_2\big) \in H^{1,\,0}({\Omega_2},\Delta)
\;\;\text{for}\;\;u\in H^{1,\,0}({\Omega_2},A_2).
\]

 In what follows, in our analysis we need the explicit expression of the principal homogeneous symbol
${\mathfrak{S}}_0(  {\bf N}_\chi ;y, \xi)$ of the singular integral operator $  {\bf N}_\chi $,
which due to \eqref{2.30-w}  and  \eqref{2.35-w}   reads as
\begin{align}
 {\mathfrak{S}}_0( {\bf N}_\chi ;y, \xi)&={\cal F}_{z\to \xi}\left(-{\rm{v.p.}}\Big[\frac{a^{(2)}_{kl} (y)}{4\,\pi}
\frac{\pa^2}{\pa z_k\,\pa z_l}\,\frac{1}{|z|}\,\Big]\right)
=-\frac{a^{(2)}_{kl} (y)}{4\,\pi}\;{\cal F}_{z\to \xi}\left({\rm{v.p.}}
\Big[\frac{\pa^2}{\pa z_k\,\pa z_l}\,\frac{1}{|z|}\,\Big]\right)
\nonumber
\\
&
= -\frac{a^{(2)}_{kl} (y)}{4\,\pi} \;{\cal F}_{z\to \xi}\Big[\frac{4\,\pi\, \delta_{kl}}{3}\, \delta(z)
+\frac{\pa^2}{\pa z_k\,\pa z_l}\,\frac{1}{|z|}  \,\Big]
=-\beta(y)- a^{(2)}_{kl} (y)(-i\,\xi_k)(-i\,\xi_l)\;{\cal F}_{z\to \xi}\Big[ \, \frac{1}{4\pi|z|}  \,\Big]
\nonumber\\
\label{2.34-2}
&
=-\beta(y)+   \frac{a^{(2)}_{kl} (y) \,\xi_k \,\xi_l}{|\xi|^2}=  \frac{ A_2(y,\xi)}{|\xi|^2}
 -\beta(y)\,,\;\;y\in \overline{\Omega}_2, \;\; \xi\in {\mathbb{R}}^3,
\end{align}
where $A_2(y,\xi)= a^{(2)}_{kl} (y) \,\xi_k \,\xi_l.$
Here and in what follows, ${\cal F}$ and ${\cal F}^{-1}$ denote  the distributional direct and inverse Fourier transform operators  which for a summable function $g$ read as
$$
{\cal F}_{z\to \xi}[\,g\,]= \int_{\mathbb{R}^n}g(z)\,e^{i\,z\cdot \xi}\, dz, \quad
{\cal F}_{\xi \to z}[\,g\,]= \frac{1}{(2\pi)^n}\int_{\mathbb{R}^n}g(\xi)\,e^{-i\,z\cdot \xi}\, d\xi .
$$
In derivation of formula \eqref{2.34-2}, we employed that
${\cal F}_{z\to \xi}[(4\pi|z|)^{-1}]=|\xi|^{-2}$ and
${\cal F}_{z\to \xi}[\pa_j g]=-i \xi_j {\cal F}_{z\to \xi}[g]$  for $n=3$.
\\
Note that the principal homogeneous symbol
$ {\mathfrak{S}}_0(  {\bf N}_\chi ;y, \xi)$ is a rational homogeneous even function of order zero in $\xi$.
\\
In view of Theorem \ref{tB.7},  the interior trace of equality \eqref{2.23-w} on $S$ reads as
 \begin{eqnarray}
\label{2.35-ww}
   \mathcal N^+ _\chi \,u_2 - \mathcal V_\chi T^+_2u_2 +
   [(\beta - \mu)\,I + \mathcal W_\chi \,]\gamma^+u_2
 =  {\cal P}^+_\chi A_2u_2  \;\;\text{on}\;\;   S,
\end{eqnarray}
where the functions $\beta$ and $\mu$ are defined by \eqref{2.32-w} and \eqref{3.8j},
 $ \mathcal N^+_\chi =\gamma^+  {\cal N}_\chi $, $ {\cal P}^+_\chi =\gamma^+ {\cal P}_\chi $,
  while  the operators
$ \mathcal V_\chi $ and $ \mathcal W_\chi $, generated by the direct values of the single and double layer potentials,  are given by formulas \eqref{B-d1}.

Finally, we formulate a  technical lemma  which follows from formulas
\eqref{2.35-ww-1}, \eqref{2.35-ww-2}, and Theorem \ref{tB.4}.
\begin{lemma}
\label{L2.3}
Let $\Phi \in H^{1,\,0}({\Omega_2}; \Delta)$,  $ \psi \in H^{-\frac{1}{2}}(S)$,
$\varphi\in H^{\frac{1}{2}}(S)$, $\chi\in X^3$,   and the function $\beta$ be defined by \eqref{2.32-w}. Moreover, let $u_2\in H^1({\Omega_2})$ and
the following equation hold
\[
\beta\,u_2 +  {\cal N}_\chi \,u_2-  V_\chi \, \psi +  W_\chi \, \varphi =\Phi  \quad
\text{in }\;\;\Omega_2.
\]
Then $u_2\in H^{1,\,0}({\Omega_2}; A_2)$
  and the following estimate holds for some constant $C>0$,
\begin{align*} 
\|u_2\|_{H^{1,0}(\Omega_2; A_2)}
\le
\,C\,\big(
\|u_2\|_{H^{1}(\Omega_2)}+
\|\psi\|_{H^{-\frac{1}{2}}(S)}+ \|\varphi\|_{H^{\frac{1}{2}}(S)}
+ \|\Phi \|_{H^{1,\,0}(\Omega_2; \Delta)}\big).
\end{align*}
\end{lemma}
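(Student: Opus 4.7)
The plan is to rearrange the hypothesis, using the identity \eqref{2.35-ww-1}, to isolate the quantity ${\cal Q}_\chi u_2$, and then exploit the explicit form \eqref{2.35-ww-2} together with the parametrix property of $P_\chi$ for $\Delta$ to extract the required regularity of $A_2 u_2$.

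First, I would substitute \eqref{2.35-ww-1}, namely ${\cal N}_\chi u_2 = -\beta u_2 - W_\chi \gamma^+ u_2 + {\cal Q}_\chi u_2$, into the equation of the hypothesis. The terms $\pm\beta u_2$ cancel and we obtain the clean identity
\[
{\cal Q}_\chi u_2 = \Phi + W_\chi(\gamma^+ u_2 - \varphi) + V_\chi \psi \quad \text{in } \Omega_2.
\]
By Theorem \ref{tB.4} applied to $V_\chi$ and $W_\chi$, together with the trace inequality bounding $\|\gamma^+ u_2\|_{H^{1/2}(S)}$ by $\|u_2\|_{H^1(\Omega_2)}$, the right-hand side lies in $H^{1,0}(\Omega_2;\Delta)$ with norm controlled by the sum on the right of the claimed estimate. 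In particular, $\Delta {\cal Q}_\chi u_2 \in H^0(\Omega_2)$.

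Next, I would use the explicit form \eqref{2.35-ww-2}, ${\cal Q}_\chi u_2(y) = \partial_{y_l}{\cal P}_\chi(a^{(2)}_{kl}\partial_k u_2)(y)$. Since $u_2 \in H^1(\Omega_2)$ and $a^{(2)}_{kl} \in C^2(\overline{\Omega}_2)$, the densities $h_l := a^{(2)}_{kl}\partial_k u_2$ belong to $L_2(\Omega_2)$. Appealing to the parametrix identity $\Delta P_\chi(z) = \delta(z) + R_\Delta(z)$ (which is immediate from $P_\chi(z) = -\chi(z)/(4\pi|z|)$ and the analogue of \eqref{2.30-w} for the Laplace operator, with $R_\Delta$ only weakly singular because $\chi \in X^3$), one concludes
\[
\Delta {\cal Q}_\chi u_2 = \partial_{y_l}\bigl(a^{(2)}_{kl}\partial_k u_2\bigr) + {\cal R} u_2,
\]
where ${\cal R}$ is continuous from $H^1(\Omega_2)$ into $L_2(\Omega_2)$ (its kernel involves derivatives of $\chi$ and the weakly singular remainder $R_\Delta$, handled by Theorem \ref{tB.4}). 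Combined with the manifest inclusion $\omega^2\kappa_2 u_2 \in L_2(\Omega_2)$, this yields $A_2 u_2 \in H^0(\Omega_2)$, i.e.\ $u_2 \in H^{1,0}(\Omega_2;A_2)$. Tracking constants through the chain gives the norm estimate.

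The main technical obstacle is the parametrix computation of $\Delta {\cal Q}_\chi u_2$ in the second step, where one must justify commuting $\Delta$ past the derivative $\partial_{y_l}$ outside the volume potential and then verify that the remainder ${\cal R}$ is $H^1 \to L_2$ bounded. Both points rest on the smoothness assumption $\chi \in X^3$ and are essentially the content already established (for the Newtonian volume potential) in Theorem \ref{tB.4}, so no new analytic estimates are needed beyond those already cited.
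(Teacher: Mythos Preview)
Your proposal is correct and follows essentially the same approach as the paper, which merely states that the lemma ``follows from formulas \eqref{2.35-ww-1}, \eqref{2.35-ww-2}, and Theorem \ref{tB.4}''; you have expanded precisely those ingredients into a full argument. The substitution of \eqref{2.35-ww-1} to isolate ${\cal Q}_\chi u_2$, the appeal to Theorem \ref{tB.4} to place the right-hand side in $H^{1,0}(\Omega_2;\Delta)$, and the use of \eqref{2.35-ww-2} together with the Laplace parametrix identity for $P_\chi$ to recover $A_2 u_2\in H^0(\Omega_2)$ are exactly what the paper intends.
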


\subsection{Integral relations in  the  homogeneous unbounded domain}
For any radiating solution $u_1\in   H^{1,\,0}_{loc}({\Omega_1},A_1)\cap Z({\Omega_1})$
with $  A_1u_1 \in H^0_{comp}({\Omega_1})$ there holds Green's third   identity (for details see the references
Colton \& Kress \cite{CK2}, Vekua \cite{Vek},  Jentsch et al \cite{JN2}, Natroshvili et al \cite{NKT})
\begin{eqnarray}
\label{2.36-w}
  u_1+ V_\omega T^-_1u_1 -W_\omega\gamma^-u_1={\cal P}_\omega  A_1 u_1 \quad\text{in}\quad \Omega_1,
\end{eqnarray}
where
\begin{eqnarray}
\label{2.37-w}
&
\dst
  V_\omega \,g(y):=-\int_S\Gamma(x-y,\omega)\,g(x)\,dS_x,
\qquad
  W_\omega \,g(y):=-\int_S [ T_1(x,\pa_x)\Gamma(x-y,\omega )]\,g(x)\,dS_x,
    \quad y\in \R^3\setminus S,
    \\
  &
  \dst
\label{2.39-w}
{\cal P}_\omega  \,f(y):=\int_{{\Omega_1}} \Gamma(x-y,\omega ) \,f(x)\,dx,\quad y\in {\mathbb{R}}^3.
\end{eqnarray}
Here  $T_1(x,\pa_x)=a^{(1)}_{kj}n_k(x) \pa_{x_j}$, $n(x)$ is the outward unit normal
vector to $S$ at the point $x\in S$, and
\be
\Gamma(x,\om)=-\frac {\mbox{\rm exp}
 \{i\om \kappa_1 ^{ {1/2}} ({ {\bf a}^{-1}_1}x\cdot x)^{1/2}\}}
 {4\pi (\det{\bf a}_1)^{1/2}({ {\bf a}^{-1}_1}x\cdot x)^{1/2}}
\label{2.40-w}
 \ee
 is a radiating  fundamental   solution   of the operator $A_1$
 (see, e.g., Lemma 1.1 in Jentsch et al \cite{JN2}).
If $x$ belongs to a bounded subset of $\R^3$, then for sufficiently large $|y|$ we have the following asymptotic formula                 
\be
 \label{2.41-w}
\Gamma(y-x,\om)=
c(\xi)\frac{\mbox{\rm exp}\{i\xi\cdot(y-x)\}} {|y|}
+\mathcal{O}(|y|^{-2}),\;\;\;c(\xi)=
-\frac{|{\bf a}_1 \,\xi|}{4\pi \om \kappa_1^{1/2} \,( \det{\bf a}_1)^{1/2}},
\ee
where $\xi=\xi(\eta) \in S_{\om}$ corresponds to the direction
$\eta=y/|y|$ and is given by \eqref{2.11-w}.
 The asymptotic formula \eqref{2.41-w} can
be differentiated  arbitrarily many  times with respect to $x$ and $y$.

The mapping  properties of these potentials and   the boundary operators generated by them  are collected in  Appendix
\ref{Appendix C}.

Evidently, the layer potentials $ V_\omega g$ and $  W_\omega g$ solve the homogeneous differential equation \eqref{2.16-w}, i.e.,
\begin{align}
 \label{2.57-ww-1}
A_1  V_\omega \,g=A_1  W_\omega \,g=0 \;\;\text{in}\;\;{\mathbb{R}}^3\setminus S,
\end{align}
while for $f_1\in H^0_{comp}(\Omega_1)$ the volume potential $ {\cal P}_\omega   f_1  \in H^2_{loc}({\mathbb{R}}^3)$
solves the following nonhomogeneous equation (see Lemma \ref{L.C1}(i))
\begin{align}
 \label{2.57-ww-2}
A_1 {\cal P}_\omega    \,f_1 =\left\{
\begin{array}{lll}
f_1 & \text{in}& {\Omega_1},\\
0 & \text{in}& {\Omega_2}.
\end{array}
\right.
\end{align}
The exterior trace and co-normal derivative of the third Green identity  \eqref{2.36-w} 
on $S$ read as (see Lemma \ref{L.C1}(ii))
\begin{align}
 \label{2.57-w}
 &
 \mathcal V_\omega T^-_1u_1+ \left(\frac{1}{2}I - \mathcal W_\omega\right)\gamma^-u_1=\gamma^-{\cal P}_\omega A_1 u_1 \;\;\;\text{on}\;\;\;S,\\
\label{2.58-w}
 &
 \left(\frac{1}{2}I + \mathcal W_\omega'\right) T^-_1u_1- \mathcal L_\omega  \gamma^-u_1= T^-_1{\cal P}_\omega  A_1 u_1 \;\;\;\text{on}\;\;\;S,
\end{align}
where the integral operators ${ \mathcal V_\omega}$, ${ \mathcal W_\omega}$, ${ \mathcal W_\omega'}$, and ${ \mathcal L_\omega}$ are defined in
 Appendix \ref{Appendix C} by formulas \eqref{2.45-w}--\eqref{2.48-w}.
 Note that the operators  ${ \mathcal V_\omega}$, $2^{-1}I -{ \mathcal W_\omega}$, $2^{-1}I
 +{ \mathcal W_\omega'}$, and ${ \mathcal L_\omega}$ involved in \eqref{2.57-w}--\eqref{2.58-w}
 are not invertible  for  resonant values of the frequency parameter $\omega$.
 The set of these resonant values is countable and consists of eigenfrequencies of the
 interior Dirichlet and Neumann boundary value problems for the operator $A_1$ in the bounded domain $\Omega_2$ {\bl (see Vekua \cite[Section 4]{Vek}, Colton \& Kress \cite[Ch. 3]{CK1}, 
 Chen \& Zhou \cite[Section 7.7]{CZ}).}
 Therefore to obtain Dirichlet-to-Neumann or Neumann-to-Dirichlet mappings for arbitrary values of the frequency parameter $\omega$ 
 {we apply the ideas of the so called {\it combined-field integral equations}, cf. Burton \& Miller \cite{BM}, Brakhage \& Werner \cite{BW}, Colton \& Kress \cite{CK1, CK2}, Leis \cite{Le}, Panich \cite{Pa}.}
 
Multiply  equation \eqref{2.57-w} by $-i\, \alpha$ with some fixed  positive $\alpha$ and add to equation \eqref{2.58-w} to obtain
\begin{align}
 \label{2.59-w}
 \mathcal K_\omega   T^-_1u_1 - \mathcal M_\omega\gamma^-u_1 =\Psi_\omega A_1 u_1  \;\;\;\text{on}\;\;\;S,
\end{align}
where
\begin{align}
\label{2.61-w}
 \mathcal K_\omega  g  &:= \Big(\frac{1}{2}\,I+  \mathcal W_\omega'  \,-i\,\alpha\,\mathcal V_\omega\Big)g=
 \big(T_1^+  - i\, \alpha \,\gamma^+\big)  V_\omega \,g  \;\;\;\text{on}\;\;\;S,
 \\
\label{2.62-w}
 \mathcal M_\omega  h  &:= \Big[ \mathcal L_\omega -i\,\alpha\,\Big(-\frac{1}{2}\,I+ \mathcal W_\omega \Big)\Big]h=
 \big(T_1^+  - i\, \alpha \,\gamma^+\big)  W_\omega \,h  \;\;\;\text{on}\;\;\;S,
\\
\label{2.60-w}
 \Psi_\omega  \,f_1&:= \big( T^-_1   - i\, \alpha \, \gamma^-\big) {\cal P}_\omega   \,f_1=
 \big(T^+_1  - i\, \alpha \, \gamma^+\big) \mathcal P_\omega \,f_1
 \;\;\;\text{on}\;\;\;S,
\end{align}
for $f_1\in H^0_{comp}(\Omega_1)$, $g\in H^{-\frac{1}{2}}(S)$, and $h\in H^{\frac{1}{2}}(S)$.

In view of Lemma \ref{L.C2},  from \eqref{2.59-w} we derive the following analogue of the Steklov-Poincar\'{e}
type relation for arbitrary $u_1\in H^{1,\,0}_{loc}({\Omega_1};A_1)\cap Z({\Omega_1})$
\begin{align}
 \label{2.63-w}
  T^-_1u_1= {\mathcal K}^{-1}_\omega  \big(  \mathcal M_\omega  \, \gamma^- u_1  + \Psi_\omega  A_1u_1 \big)   \;\;\;\text{on}\;\;\;S,
\end{align}
where ${\mathcal K}^{-1}_\omega: H^{-\frac{1}{2}}(S)\to H^{-\frac{1}{2}}(S) $ is the inverse to the operator
 $ {\mathcal K}_\omega: H^{-\frac{1}{2}}(S)\to H^{-\frac{1}{2}}(S)$.

\subsection{
Equivalent reduction to a system of integral equations.}
Let us set
\be
\label{2.64-w}
\varphi_1=\gamma^- u_1,  \;\;\;\varphi_2:=\gamma^+u_2,
\;\;\;\psi_1= T^-_1 u_1,\;\;\;\psi_2:=  T^+_2 u_2.
\ee
If a pair $(u_1,u_2)$ solves the transmission problem \eqref{2.16-w}-\eqref{2.19-w}, then
by notation \eqref{2.64-w} and relations \eqref{2.23-w}, \eqref{2.35-ww}, \eqref{2.59-w},
\eqref{2.36-w},  the following equations hold true:
\begin{align}
\label{2.71-w}
&
\beta \,u_2 +   {\cal N}_\chi \,u_2 -  V_\chi \,\psi_2 +  W_\chi \,\varphi_2 = {\cal P}_\chi
 \,f_2   \;\;\;\text{in}\;\;\; {\Omega_2},\\
\label{2.72-w}
&
 {\mathcal N}^+_\chi \,u_2 - {\mathcal V}_\chi \psi_2 +[(\beta - \mu)\,I + \mathcal W_\chi ]\varphi_2
= {\cal P}^+_\chi  \,f_2  \; \;\;\text{on}\;\; \;  S,\\
&
\label{2.73-w}
 {\mathcal K}_\omega   \psi_2 -  {\mathcal M}_\omega  \varphi_2 = \Psi_\omega \,f_1
+ {\mathcal K}_\omega   \psi_{_0} -  \mathcal M_\omega  \varphi_{_0}   \;\;\;\text{on}\;\;\;S,\\
&
\label{2.75-w}
\psi_2-\psi_1=\psi_{_0} \;\;\;\text{on}\;\;\;S,\\
&
\label{2.74-w}
\varphi_2-\varphi_1=\varphi_{_0} \;\;\;\text{on}\;\;\;S,\\
&
\label{2.76-w}
u_1 +  V_\omega \,\psi_1-  W_\omega \,\varphi_1 = {\cal P}_\omega  \,f_1 \;\;\;\; \text{in}\;\;\; {\Omega_1}.
\end{align}
Let us consider relations \eqref{2.71-w}-\eqref{2.76-w} as a LBDIE system  with respect to  the  unknowns
$
(u_2, \psi_2, \varphi_2,  \psi_1, \varphi_1, u_1)\in \mathbf{H},
$
where
\begin{align}
\label{2.78-w}
\mathbf{H}:=H^{1,0}({\Omega_2}; A_2)\times H^{-\frac{1}{2}}(S)\times H^{\frac{1}{2}}(S)\times H^{-\frac{1}{2}}(S)\times H^{\frac{1}{2}}(S)
\times \big( H^{1,0}_{loc}({\Omega_1}; A_1) \cap Z({\Omega_1})\big).
\end{align}
{\bl Note that if $P_\chi$ would be replaced with the corresponding fundamental solution, then we would have ${\cal N}_\chi u_2=0$, ${\cal N}_\chi^+ u_2=0$, $\beta=1$, and $\mu=1/2$ in \eqref{2.71-w}-\eqref{2.72-w}.
Thus the system could be split to the boundary integral equation system \eqref{2.72-w}-\eqref{2.74-w} and the representation formulas \eqref{2.71-w}, \eqref{2.76-w} for the functions $u_1$ and $u_2$ in the domains $\Omega_1$ and $\Omega_2$, respectively.}

  Let us prove the following equivalence theorem.
\begin{theorem}
\label{T2.3}
Let conditions \eqref{2.20-w1} hold.

(i) If a pair $(u_2, u_1)\in   H^{1,\,0}({\Omega_2};A_2)\times \big( H^{1,\,0}_{loc}({\Omega_1};A_1)\cap Z({\Omega_1})\big)$
solves  transmission  problem \eqref{2.16-w}--\eqref{2.19-w},
then the vector $(u_2, \psi_2, \varphi_2,  \psi_1, \varphi_1, u_1)\in \mathbf{H}$ with
 $\psi_q$ and $\varphi_q$, $q=1,2,$  defined by \eqref{2.64-w},
  solves  LBDIE system \eqref{2.71-w}--\eqref{2.76-w}.

(ii)  Vice versa, if  a vector $(u_2, \psi_2, \varphi_2,  \psi_1, \varphi_1, u_1)\in \mathbf{H}$
solves LBDIE  system \eqref{2.71-w}--\eqref{2.76-w},  then the pair
$(u_2, u_1)\in H^{1,\,0}({\Omega_1};A_1)\times \big( H^{1,\,0}_{loc}({\Omega_1};A_1)\cap Z({\Omega_1})\big)$
solves   transmission problem \eqref{2.16-w}--\eqref{2.19-w} and
 relations \eqref{2.64-w} hold true.
\end{theorem}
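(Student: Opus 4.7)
The forward direction (i) is a direct verification. Given a solution $(u_2,u_1)$ of the transmission problem \eqref{2.16-w}--\eqref{2.19-w} and the definitions \eqref{2.64-w}, equation \eqref{2.71-w} is exactly Green's third identity \eqref{2.23-w} with $A_2 u_2=f_2$ substituted; \eqref{2.72-w} is its boundary trace \eqref{2.35-ww}; \eqref{2.76-w} is Green's third identity \eqref{2.36-w} for $u_1$ with $A_1 u_1=f_1$; equation \eqref{2.73-w} is the combined-field identity \eqref{2.59-w} for $u_1$ after the transmission conditions \eqref{2.18-w}--\eqref{2.19-w} have been used to rewrite $T^-_1 u_1$ and $\gamma^- u_1$ as $\psi_2-\psi_0$ and $\varphi_2-\varphi_0$; and \eqref{2.74-w}--\eqref{2.75-w} are those transmission conditions themselves.

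For the nontrivial direction (ii) I would first treat the exterior unknowns. Subtracting \eqref{2.76-w} from \eqref{2.36-w}, which is legitimate since $u_1\in H^{1,0}_{loc}(\Omega_1;A_1)\cap Z(\Omega_1)$, produces $V_\omega(T^-_1 u_1-\psi_1)-W_\omega(\gamma^- u_1-\varphi_1)={\cal P}_\omega(A_1 u_1-f_1)$ in $\Omega_1$. Applying $A_1$ to both sides and invoking \eqref{2.57-ww-1}--\eqref{2.57-ww-2} kills the layer potentials and yields $A_1 u_1=f_1$ in $\Omega_1$; the identity then reduces to $V_\omega g-W_\omega h=0$ in $\Omega_1$, with $g:=T^-_1 u_1-\psi_1$ and $h:=\gamma^- u_1-\varphi_1$. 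Substituting the transmission equations \eqref{2.74-w}--\eqref{2.75-w} into \eqref{2.73-w} and comparing the result with the Steklov--Poincar\'e relation \eqref{2.63-w} applied to $u_1$ (now available thanks to $A_1 u_1=f_1$) produces ${\cal K}_\omega g-{\cal M}_\omega h=0$ on $S$, which by \eqref{2.61-w}--\eqref{2.62-w} is the impedance trace $(T^+_1-i\alpha\gamma^+)v=0$ on $S$ of the auxiliary field $v:=V_\omega g-W_\omega h$. Viewed in $\Omega_2$, $v$ satisfies $A_1 v=0$ by \eqref{2.57-ww-1} together with the Robin condition above; taking the imaginary part of the first Green identity forces $\gamma^+ v=0$, hence also $T^+_1 v=0$, so $v$ has vanishing Cauchy data on $S$ and by Holmgren/Calder\'on uniqueness $v\equiv 0$ in $\Omega_2$ for every real $\omega$ (this is precisely why the combined-field operator ${\cal K}_\omega$ was introduced). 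With $v=0$ on both sides of $S$, the standard jump relations for $V_\omega$ and $W_\omega$ force $g=h=0$, i.e.\ $\psi_1=T^-_1 u_1$ and $\varphi_1=\gamma^- u_1$.

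For the interior unknowns, subtracting \eqref{2.71-w} from \eqref{2.23-w} for $u_2\in H^{1,0}(\Omega_2;A_2)$ yields $-V_\chi\Psi_2+W_\chi\Phi_2={\cal P}_\chi F_2$ in $\Omega_2$, with $\Psi_2:=T^+_2 u_2-\psi_2$, $\Phi_2:=\gamma^+ u_2-\varphi_2$, and $F_2:=A_2 u_2-f_2$. Taking the trace of \eqref{2.71-w} on $S$ (which via \eqref{2.23-w} contributes a $\beta\gamma^+ u_2$ term from the free term) and subtracting the independent system equation \eqref{2.72-w} (which via \eqref{2.35-ww} contributes $(\beta-\mu)\varphi_2$) isolates the pointwise relation $\beta(\gamma^+ u_2-\varphi_2)=0$ on $S$; since $\beta>0$ by ellipticity \eqref{2.4-w}, this gives $\Phi_2=0$, i.e.\ $\gamma^+ u_2=\varphi_2$. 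The residual identity $V_\chi\Psi_2+{\cal P}_\chi F_2=0$ in $\Omega_2$ together with its surface trace ${\cal V}_\chi\Psi_2+{\cal P}^+_\chi F_2=0$ on $S$ would then be analysed through the auxiliary function $w:=V_\chi\Psi_2+{\bf P}_\chi\widetilde F_2$ in $\R^3$, where $\widetilde F_2$ is the extension of $F_2$ by zero. Using the distributional identity $\Delta P_\chi=\delta+R_\chi$ with $R_\chi$ weakly singular (since $\chi(0)=1$), $w$ lies in $H^1(\R^3)$, has compact support, vanishes on $\overline\Omega_2$, and its normal-derivative jump across $S$ is proportional to $\Psi_2$; combining these jumps with the invertibility of the localized harmonic single-layer operator ${\cal V}_\chi$ and the injectivity of the Fredholm operator obtained by applying $\Delta$ to the reduced volume identity forces $\Psi_2=0$ and $F_2=0$. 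The transmission conditions \eqref{2.18-w}--\eqref{2.19-w} finally follow from \eqref{2.74-w}--\eqref{2.75-w} together with the four identifications $\psi_q=T^\pm_q u_q$, $\varphi_q=\gamma^\pm u_q$.

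The main obstacle is this very last step for the interior unknowns. Unlike the exterior case, where applying $A_1$ to the auxiliary volume identity killed $V_\omega$ and $W_\omega$ in one stroke because they are built from a \emph{genuine} fundamental solution of $A_1$, here $P_\chi$ is only a \emph{quasi}-parametrix for $A_2$: applying $A_2$ to the interior identity reinstates the strongly singular operator ${\cal N}_\chi$ with non-trivial principal symbol \eqref{2.34-2}, so one cannot read off $A_2 u_2=f_2$ directly. One is therefore forced to marry the $\Delta$-image of the reduced identity with its boundary trace and to exploit invertibility of ${\cal V}_\chi$ together with a uniqueness argument for an associated interior Dirichlet-type problem, rather than obtaining the PDE $A_2 u_2=f_2$ from a single application of the differential operator.
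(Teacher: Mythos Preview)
Your overall architecture is exactly the one the paper uses: part~(i) by direct verification from \eqref{2.23-w}, \eqref{2.35-ww}, \eqref{2.36-w}, \eqref{2.59-w}; part~(ii) by first recovering $\gamma^+u_2=\varphi_2$ from the comparison of the trace of \eqref{2.71-w} with \eqref{2.72-w}, reducing to $V_\chi\Psi_2+\mathcal P_\chi F_2=0$ in $\Omega_2$, and separately handling the exterior data via the auxiliary interior Robin problem for $A_1$. The order (exterior first versus interior first) and the precise choice of auxiliary function ($V_\omega g-W_\omega h$ versus the paper's $V_\omega\psi_1-W_\omega\varphi_1-\mathcal P_\omega f_1$) are cosmetic differences; the jump-relation endgame is the same.

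Where you diverge from the paper is exactly where you yourself flag the obstacle. The paper does \emph{not} reprove the implication
\[
V_\chi\Psi_2+\mathcal P_\chi F_2=0\ \text{in }\Omega_2\quad\Longrightarrow\quad \Psi_2=0,\ F_2=0;
\]
it simply invokes Lemma~6.3 of Chkadua et al.\ \cite{CMN3}, which is precisely this injectivity statement for the localized single-layer/volume pair. Your sketch of a direct proof is headed in the right direction (extend by zero, apply $\Delta$, use $\Delta P_\chi=\delta+R_\chi$, read off jumps) but contains a genuine gap: you appeal to ``the invertibility of the localized harmonic single-layer operator $\mathcal V_\chi$''. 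The paper establishes only that $\mathcal V_\chi:H^{s-\frac12}(S)\to H^{s+\frac12}(S)$ is Fredholm of index zero (Theorem~\ref{tB.6}), not that it is invertible, so this step as written is unjustified. The actual proof of the cited lemma uses instead the positivity of the full symbol of ${\bf P}_\chi$ encoded in the class $X_+^k$ (Definition~\ref{dA.1}), which is what ultimately forces $F_2=0$ first and then $\Psi_2=0$; your $w$-argument does not access this ingredient. In short: everything else is fine, but for the interior step you should either cite Lemma~6.3 of \cite{CMN3} as the paper does, or reproduce its proof properly using the $X_+^k$ positivity rather than an unproved invertibility of $\mathcal V_\chi$.
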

\begin{proof} (i) The first part of the theorem directly follows form
the formulation of the transmission problem  \eqref{2.16-w}--\eqref{2.19-w} and relations
\eqref{2.23-w}, \eqref{2.35-ww}, \eqref{2.36-w}, \eqref{2.59-w}.

(ii) Now let a vector  $(u_2, \psi_2, \varphi_2,  \psi_1, \varphi_1, u_1)\in \mathbf{H}$  solve
 system  \eqref{2.71-w}--\eqref{2.76-w}.
Taking the trace of \eqref{2.71-w} on  $S$  and  comparing  with  \eqref{2.72-w} lead to the equation
\begin{eqnarray}
\label{2.79-w}
\gamma^+u_2=\varphi_2 \;\;\text {on}\;\; S.
\end{eqnarray}
Further, since $u_2\in H^{1,\,0}(\Omega_2;A_2)$ we can write  Green's third identity  \eqref{2.23-w} which in view of \eqref{2.79-w}
can be rewritten as
\begin{align}
\label{2.80-w}
 \beta\,u_2 + {\cal N}_\chi \,u_2- V_\chi T^+u_2 + W_\chi\varphi_2
= {\cal P}_\chi  A_2u_2 \;\;\;\; \mbox{in}\;\;{\Omega_2}.
\end{align}
From \eqref{2.71-w} and \eqref{2.80-w}, it follows that
\[
 V_\chi  (T^+u_2-\psi_2)+  {\cal P}_\chi  \big(A_2u_2-f_2\big)=0\;\; \text{in} \;\; \Omega_2.
\]
Whence by Lemma 6.3 in Chkadua et al\cite{CMN3} we deduce
\begin{align}
\label{2.82-ww-1}
A_2u_2=f_2\;\;\text{in}\;\;\Omega_2,\qquad
T^+ u_2=\psi_2\;\;\text{on}\;\;S.
\end{align}
From equation \eqref{2.76-w} it follows that
\begin{align}
\label{2.82-ww-2}
A_1u_1=f_1\;\;\text{in}\;\;\Omega_1.
\end{align}
From \eqref{2.73-w}, \eqref{2.74-w}, and \eqref{2.75-w} we derive
\begin{align}
\label{2.82-ww-3}
{ \mathcal K}_\omega   \psi_1 -  {\mathcal M}_\omega \varphi_2-  \Psi_\omega f_1=0
 \;\;\;\text{on}\;\;\;S.
\end{align}
Now, let us  consider the function
\begin{align}
\label{2.83-w}
  w  :=   V_\omega \,\psi_1-  W_\omega \,\varphi_1 - {\cal P}_\omega    f_1  \;\;\;\; \text{in}\;\;\; \Omega_2 .
\end{align}
  In  view of the inclusion ${ \cal P}_\omega  \,f_1 \in H^2_{loc}({ \mathbb{R}}^3)$
it follows that $ \gamma^+\mathcal P_\omega  f_1= \gamma^-{\cal P}_\omega f_1$ and
$T^+_1{ \cal P}_\omega   f_1=T^-_1 {\cal P}_\omega f_1$ on $S$.
Whence due to \eqref{2.61-w}--\eqref{2.60-w}, \eqref{2.82-ww-3},  and Lemma \ref{L.C1}, we have
$w\in H^{1,0}({\Omega_2}; A_1)$ and
\begin{align*}
 \big(T_1^+
- i\, \alpha \,\gamma^+ \big) w  =&\,
\Big( \frac{1}{2}I + {\mathcal W}_\omega'  - i\,\alpha\, \mathcal V_\omega\Big) \psi_1 -
 \Big[ \mathcal L_\omega -i\,\alpha\,\Big(-\frac{1}{2} I+ {\mathcal W}_\omega \Big)\Big]\varphi_1 -
 \big(   T^-  - i\, \alpha \,\gamma^{-}\big) {\cal P}_\omega  f_1
 \\
  = &
 \mathcal K_\omega   \psi_1 -  \mathcal M_\omega  \varphi_1 -  \Psi_\omega  f_1 =0\;\; \text{on}\;\; S.
\end{align*}
Consequently, in view of  \eqref{2.57-ww-1} and \eqref{2.57-ww-2} we see that the function $  w $ solves
the homogeneous Robin type interior boundary value problem,
\[
  A_1 w =0\;\;\text{in}\;\; {\Omega_2}, \qquad T_1^+ w - i\, \alpha \,\gamma^+  w =0
\;\;\text{on}\;\;S.
\]
By Green's first identity \eqref{2.14-wG} for the operator $A_1$ we have
\[
 \int_{\Omega_2} \overline{  w (x)} A_1w(x)\ dx=
-\int_{\Omega_2} \big[\,a^{(1)}_{kj} \,\pa_{j}w (x)\;\overline{\pa_{k} w  (x)}-\omega^2 \kappa_1|w(x)|^2\,\big]\,dx
+\big\langle  T^+_1w\,,\, \overline{\gamma^+ w}\big\rangle_S,
\]
  and since for the real symmetric matrix $a^{(1)}_{kj}$ the function
  $  a^{(1)}_{kj} \,\pa_{j} w  (x)
  \; \overline{\pa_{k}  w   (x)}$ is also real-valued,
it follows that  $\gamma^+ w =0$ and $T^+_1 w =0$ on $S$  for real $\alpha\neq 0$.
Consequently, the function $w$ defined in \eqref{2.83-w} vanishes identically in ${\Omega_2}$ in view of the corresponding Green's third identity.
Due to the jump relations for the  layer potentials presented in Lemma \ref{L.C1}(ii) and
since ${\cal P}_\omega   f_1 \in H^2_{loc}({ \mathbb{R}}^3)$,
we have  from \eqref{2.76-w} and \eqref{2.83-w}  the following relations,
\begin{align}
\label{2.86-w}
\gamma^-u_1=\gamma^-u_1 +\gamma^+ w =\varphi_1, \;\;\;\;\;    T^-_1u_1=  T^-_1u_1 + T_1^+  w =\psi_1.
\end{align}
From  equations \eqref{2.75-w}--\eqref{2.74-w} and relations \eqref{2.79-w}, \eqref{2.82-ww-1},  \eqref{2.82-ww-2}, and \eqref{2.86-w}
it follows that  the pair $(u_2,u_1)$
solves the transmission problem \eqref{2.16-w} and
 relations \eqref{2.64-w} hold true.
\end{proof}

From uniqueness Theorem \ref{T2.4} and the equivalence Theorem \ref{T2.3}, the following assertion follows directly.

\begin{corollary}
\label{C2.6}
Let conditions \eqref{2.20-w1} be fulfilled.
Then the LBDIE  system \eqref{2.71-w}--\eqref{2.76-w} possesses at most one solution in the space $\mathbf{H}$
defined in \eqref{2.78-w}.
\end{corollary}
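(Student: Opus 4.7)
The plan is to exploit linearity of the LBDIE system together with the two results already at our disposal: the equivalence Theorem \ref{T2.3} and the uniqueness Theorem \ref{T2.4}. Since the system \eqref{2.71-w}--\eqref{2.76-w} is linear in the unknown six-tuple $(u_2,\psi_2,\varphi_2,\psi_1,\varphi_1,u_1)$, the difference of any two solutions in $\mathbf{H}$ solves the homogeneous version of the system, corresponding to $f_1=0$, $f_2=0$, $\varphi_{_0}=0$, $\psi_{_0}=0$. Hence it suffices to show that the only solution of the homogeneous LBDIE system in $\mathbf{H}$ is the zero vector.

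So let $(u_2,\psi_2,\varphi_2,\psi_1,\varphi_1,u_1)\in \mathbf H$ satisfy the homogeneous version of \eqref{2.71-w}--\eqref{2.76-w}. By Theorem \ref{T2.3}(ii), the pair $(u_2,u_1)\in H^{1,0}(\Omega_2;A_2)\times\bigl(H^{1,0}_{loc}(\Omega_1;A_1)\cap Z(\Omega_1)\bigr)$ solves the corresponding homogeneous transmission problem \eqref{2.16-w}--\eqref{2.19-w}, and moreover the identifications \eqref{2.64-w} hold, namely
\begin{align*}
\varphi_1=\gamma^- u_1,\quad \varphi_2=\gamma^+ u_2,\quad \psi_1=T^-_1 u_1,\quad \psi_2=T^+_2 u_2.
\end{align*}

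The uniqueness Theorem \ref{T2.4} then forces $u_1=0$ in $\Omega_1$ and $u_2=0$ in $\Omega_2$. Applying the trace operators $\gamma^\pm$ and the canonical co-normal derivative operators $T^\pm_q$ (which are continuous on the relevant spaces), the identifications above immediately yield $\varphi_1=\varphi_2=0$ in $H^{\frac12}(S)$ and $\psi_1=\psi_2=0$ in $H^{-\frac12}(S)$. Thus the whole six-tuple vanishes in $\mathbf{H}$, proving uniqueness.

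There is no genuine obstacle here: the entire argument is a clean composition of the two preceding theorems, and the only point deserving attention is to invoke Theorem \ref{T2.3}(ii) correctly so that the auxiliary components $\psi_q,\varphi_q$ are pinned down as traces/co-normal derivatives of $u_1,u_2$, which is exactly what allows the uniqueness of $(u_1,u_2)$ to propagate to uniqueness of the full LBDIE unknown.
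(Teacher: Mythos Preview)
Your argument is correct and is precisely the approach the paper takes: the corollary is stated as a direct consequence of Theorems \ref{T2.4} and \ref{T2.3}, and you have simply spelled out the obvious details (reduction to the homogeneous system by linearity, application of Theorem \ref{T2.3}(ii) to obtain both $u_1=u_2=0$ and the identifications \eqref{2.64-w}, which then kill the remaining components).
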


\section{Analysis of the LBDIO}
\label{S-4}

Let us rewrite the LBDIE system  \eqref{2.71-w}--\eqref{2.76-w}  in a more convenient form for our further purposes
\begin{align}
\label{4.1-w}
&
(\beta \, I +  {\bf N}_\chi)\,\mathring{E}\,u_2 -  V_\chi \,\psi_2 +  W_\chi \,\varphi_2
={ \cal P}_\chi    f_2 \;\;\;\text{in}\;\;\; {\Omega_2},\\
\label{4.2-w}
&
 {\bf N}_\chi^+ \,\mathring{E}\,u_2 - \mathcal V_\chi \psi_2 +[(\beta - \mu)\,I + \mathcal W_\chi ]\varphi_2
 = {\cal P}_\chi^+  f_2  \; \;\;\text{on}\;\; \;  S,\\
&
\label{4.3-w}
 \mathcal K_\omega   \psi_2 -  \mathcal M_\omega  \varphi_2 = \Psi_\omega  f_1
+ \mathcal K_\omega   \psi_0 -  \mathcal M_\omega  \varphi_0   \;\;\;\text{on}\;\;\;S,\\
&
\label{4.4-w}
\psi_2-\psi_1=\psi_0 \;\;\;\text{on}\;\;\;S,\\
&
\label{4.5-w}
\varphi_2-\varphi_1=\varphi_0 \;\;\;\text{on}\;\;\;S,\\
&
\label{4.6-w}
u_1 +  V_\omega \,\psi_1-  W_\omega \,\varphi_1 = {\cal P}_\omega  f_1 \;\;\;\; \text{in}\;\;\; {\Omega_1},
\end{align}
where $\mathring{E}=\mathring{E}_{\Omega_2}$ denotes the extension operator by zero from ${\Omega_2}$ onto ${\Omega_1}$, ${\bf N}_\chi$ is a
pseudodifferential operator given in \eqref{2.28-w},
 ${\bf N}_\chi^+  =\gamma^+  {\bf N}_\chi $ and ${\cal P}^+_\chi =\gamma^+ {\cal P}_\chi $.
Note that for a function $u_2\in H^1(\Omega_2)$ we have
$ \beta \, u_2  +   {\cal N}_\chi \,u_2= \big(\beta \, I +  {\bf N}_\chi \big)\mathring{E} u_2$ in ${\Omega_2}.$

It can easily be seen that if the unknowns $(u_2, \psi_2, \varphi_2)$  are   determined  from the first
three equations  of  system   \eqref{4.1-w}--\eqref{4.6-w}, then the unknowns $(\psi_1, \varphi_1, u_1)$
are  determined  explicitly from the last three equations   of the same system.
Therefore the main   task  is to investigate the matrix integral operator generated
by the left had side expressions in  \eqref{4.1-w}--\eqref{4.3-w}.

Let us rewrite the first three equations of the LBDIE system  \eqref{4.1-w}--\eqref{4.6-w} in  matrix form
$$
   \mathbf{M}  \,U =F,
$$
where
 $ U  :=(u_2, \psi_2, \varphi_2)^\top,$
$ F :=\big(F _1, F _2, F _3\big)^\top,$
$F _1:= {\cal P}_\chi  f_2 ,$
$F _2:= {\cal P}_\chi^+   f_2 ,$
$F _3:= \Psi_\omega  f_1 + \mathcal K_\omega  \psi_0 -  \mathcal M_\omega\, \varphi_0 ,$
\begin{align}
\label{4.12-w}
&
 \mathbf M :=\left[
\begin{array}{ccc}
r_{\Omega_2}( \beta\,I+{\bf N}_\chi )\mathring{E} \;\;\;\; & -r_{\Omega_2}  V_\chi  \;\;\;\;& r_{ \Omega_2}  W_\chi   \\
 {\bf N}_\chi^+  \mathring{E}     \;\;\;\; & - \mathcal{V}_\chi \;\;\;\;& (\beta - \mu)I + \mathcal{W}_\chi   \\
0 \;\;\;\; &   \mathcal K_\omega  \;\;\;\; & -  \mathcal M_\omega
\end{array}
\right]\,.
\end{align}
Let us introduce the spaces
\begin{align*}
&
 \mathbb{H} :=H^{1,\,0}({\Omega_2}; A_2)\times H^{-\frac{1}{2}}(S)\times H^{\frac{1}{2}}(S),
 &&
 \mathbb{F} :=H^{1,\,0}({\Omega_2}; \Delta)\times H^{\frac{1}{2}}(S)\times H^{-\frac{1}{2}}(S),\\
&
 \mathbb{X}  :=H^{ 1 }({\Omega_2})\times H^{ -\frac{1}{2}}(S)\times H^{ \frac{1}{2}}(S),
 &&
 \mathbb{Y}:=H^{ 1 }({\Omega_2})\times H^{ \frac{1}{2}}(S)\times H^{ -\frac{1}{2}}(S),
\end{align*}
Recall that for $\chi\in X_+^4$ the principal homogeneous symbol ${\mathfrak{S}}_0( {\bf N}_\chi ;y, \xi)$
of the operator
$ {\bf N}_\chi $ given by \eqref{2.34-2} is a rational homogeneous function of order zero in $\xi$.
 Therefore, applying the inclusion  \eqref{2.35-ww-3} and
 the mapping properties of the pseudodifferential operators
with rational type symbols (see, e.g.,  Hsiao \& Wendland\cite{HW}, Theorem 8.4.13) and
using Theorems  \ref{tB.4} and   \ref{tB.6}  we deduce that the operators
\begin{align}
&
\label{4.17-ww-1}
  \mathbf M \;:\; \mathbb H  \to   \mathbb F ,
  \\
&
\label{4.17-w}
  \mathbf M \;:\; \mathbb X  \to  \mathbb Y
\end{align}
are continuous for $\chi\in X_+^4$.
 Now we prove the  main theorem of this section.

\begin{theorem}
\label{T4.1}
Let $\chi\in X_+^4$. Operator \eqref{4.17-w} is invertible.
\end{theorem}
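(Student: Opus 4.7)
The plan is to deduce invertibility of $\mathbf M:\mathbb X\to\mathbb Y$ from three ingredients: (a) $\mathbf M$ is Fredholm, (b) $\ker\mathbf M=\{0\}$, and (c) $\mathrm{ind}\,\mathbf M=0$. Item (a) will come from the Vishik-Eskin pseudodifferential calculus applied to the bulk operator $\beta I+{\bf N}_\chi$ together with the standard calculus for boundary pseudodifferential operators on $S$; (b) will come from a reduction to the homogeneous transmission problem via Lemma~\ref{L2.3} and the equivalence Theorem~\ref{T2.3}; (c) will come from a homotopy argument.

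For (a), by \eqref{2.34-2} the principal symbol of $\beta I+{\bf N}_\chi$ equals $A_2(y,\xi)/|\xi|^2$, a rational even homogeneous symbol of order zero which is nonvanishing on $\xi\neq 0$ by the uniform positive-definiteness \eqref{2.4-w}. Hence $r_{\Omega_2}(\beta I+{\bf N}_\chi)\mathring E$ is elliptic on $\Omega_2$, and Hsiao-Wendland's Theorem 8.4.13 (which is tailored to rational-symbol pseudodifferential operators on manifolds with boundary) yields a Fredholm operator on $H^1(\Omega_2)$. The $2\times 2$ pure-boundary subblock is a Douglis-Nirenberg elliptic system on $S$: its principal symbols are those of a localized single-layer operator, a weakly singular double-layer operator, and the combined-field operators $\mathcal K_\omega,\mathcal M_\omega$, whose principal symbols coincide with those of their Laplace counterparts, and whose invertibility is already recorded in Lemma~\ref{L.C2}. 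Combining the bulk ellipticity, the boundary ellipticity, and the continuity of the coupling entries ${\bf N}_\chi^+\mathring E$, $\mathcal V_\chi$, $\mathcal W_\chi$, one obtains Fredholmness of $\mathbf M$.

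For (b), assume $\mathbf M U=0$ with $U=(u_2,\psi_2,\varphi_2)\in\mathbb X$. Equation \eqref{4.1-w} is then exactly the hypothesis of Lemma~\ref{L2.3} with $\Phi=0$, whence $u_2\in H^{1,0}(\Omega_2;A_2)$ and so $U\in\mathbb H$. I would then extend $U$ to a six-tuple in $\mathbf H$ by setting $\psi_1:=\psi_2$, $\varphi_1:=\varphi_2$ (the homogeneous versions of \eqref{4.4-w}, \eqref{4.5-w}) and $u_1:=-V_\omega\psi_1+W_\omega\varphi_1$ in $\Omega_1$ (the homogeneous version of \eqref{4.6-w}); the radiation properties of $V_\omega$ and $W_\omega$ guarantee $u_1\in H^{1,0}_{loc}(\Omega_1;A_1)\cap Z(\Omega_1)$. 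The extended tuple satisfies the homogeneous LBDIE system \eqref{2.71-w}--\eqref{2.76-w}, so Theorem~\ref{T2.3}(ii) shows that $(u_2,u_1)$ solves the homogeneous transmission problem, and Theorem~\ref{T2.4} then forces $u_1=0$ and $u_2=0$. The identities $\varphi_2=\gamma^+u_2$ and $\psi_2=T_2^+u_2$ from the proof of Theorem~\ref{T2.3}(ii) yield $\varphi_2=\psi_2=0$, i.e.\ $U=0$.

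For (c), I would use a homotopy that continuously deforms $a^{(2)}_{kj}(x)$ to $\delta_{kj}$ and $\kappa_2(x)$ to $\kappa_1$ while preserving \eqref{2.4-w} and keeping $\omega$ fixed. Along the homotopy the bulk symbol $A_2(y,\xi)/|\xi|^2$ stays elliptic and the boundary principal symbols stay nonsingular, so the family $\mathbf M_t$ is a norm-continuous family of Fredholm operators and hence has constant index; at the isotropic constant-coefficient endpoint the system decouples into a classical harmonic integral representation plus the boundary block, from which the index is seen to vanish. Combined with (b), this gives the invertibility of \eqref{4.17-w}. The step I expect to be the main obstacle is (a), specifically the verification of the Shapiro-Lopatinskii/boundary ellipticity condition that couples the bulk pseudodifferential operator ${\bf N}_\chi$ with the trace operator ${\bf N}_\chi^+\mathring E$ and the interface layer operators; this requires a careful symbol computation using the rational form of $A_2(y,\xi)/|\xi|^2$ near boundary points, but once it is in place the Vishik-Eskin theorem delivers the Fredholm property automatically.
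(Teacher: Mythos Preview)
Your overall scheme (Fredholm + trivial kernel + index zero) matches the paper's, and your argument for injectivity in (b) is correct and in fact slightly cleaner than the paper's route: the paper uses a left regularizer of the Fredholm operator to lift regularity, whereas your use of Lemma~\ref{L2.3} does the job directly.

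There is, however, a genuine gap in (a). Knowing that the interior operator $r_{\Omega_2}(\beta I+{\bf N}_\chi)\mathring E$ is Fredholm on $H^1(\Omega_2)$ and that the $2\times 2$ ``pure boundary'' block is Douglis--Nirenberg elliptic (which you have not actually verified, and which in fact fails in the isotropic case $a^{(2)}_{kj}=\delta_{kj}$, $a^{(1)}_{kj}=\delta_{kj}$, where the principal-symbol determinant of that block vanishes) does \emph{not} give Fredholmness of $\mathbf M$. Continuity of the coupling entries is irrelevant; what is required is the \v{S}apiro--Lopatinski\u{\i} condition for the full Green operator, which couples the bulk symbol, the trace symbol of ${\bf N}_\chi^+\mathring E$, the Poisson symbols of $V_\chi,W_\chi$, and the boundary symbols together. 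You acknowledge this at the end, but the paragraph presented as a proof of (a) does not establish it.

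The paper deals with this by first factoring $\mathbf M=\mathbf B\,\mathbf C$ (using the invertibility of $\mathcal M_\omega$ from Lemma~\ref{L.C2}) so that invertibility of $\mathbf M$ reduces to invertibility of a $2\times 2$ operator $\mathbf D$ whose second column already contains $\mathcal M_\omega^{-1}\mathcal K_\omega$. For $\mathbf D$ the \v{S}apiro--Lopatinski\u{\i} symbol is a single scalar function ${\bf S}_{\mathbf D}(\xi')$. The paper then carries out the full Vishik--Eskin computation: factorizing ${\bf R}_{11}=A_2/|\xi|^2$ through the roots $\tau,\bar\tau$ of $A_2(\xi',\cdot)=0$, evaluating the operators $\Pi^+$ and $\Pi'$ via residues, and finally obtaining $\mathrm{Re}\,{\bf S}_{\mathbf D}(\xi')<0$ from positivity of $\widetilde\beta$, $\widetilde a^{(2)}_{33}$, $\alpha_2$, and $\mathfrak S_0(\mathcal V_\omega)$. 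This computation is the heart of the proof and cannot be bypassed; without it there is no Fredholm property.

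For (c), the paper's homotopy is different from yours: it keeps the coefficients fixed and instead scales the second row of $\mathbf D$ by $t$, adding $(t-1)\beta I$ to the $(2,2)$ entry. This is designed so that the boundary symbol becomes $t\,{\bf S}_{\mathbf D}(\xi')-(1-t)\widetilde\beta$, whose real part stays negative for all $t\in[0,1]$ by the same inequality, and so that the endpoint $\mathbf D_0$ is upper triangular with diagonal entries $\mathbf D_{11}$ (already shown to be Fredholm of index zero) and $-\beta I$. Your coefficient-deformation homotopy may also work, but you would still need the explicit \v{S}apiro--Lopatinski\u{\i} inequality along the deformation, and identifying the index at the isotropic endpoint is not as immediate as you suggest, since ${\bf N}_\chi^+\mathring E$ does not vanish there.
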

\begin{proof} Using  Lemma \ref{L.C2}, we can represent the matrix operator $  \mathbf M $ defined in \eqref{4.12-w} as a composition of two operators
\[
 \mathbf M = {\bf B}{\bf C} ,
\]
where
\begin{align}
\label{4.18-w}
&
  {\bf B }:=\left[
\begin{array}{ccc}
r_{ \Omega_2} ( \beta\,  I  +   {\bf N}_\chi )\mathring{E}\;\;\;  & r_{ \Omega_2} \big[- V_\chi
+  W_\chi  \mathcal M_\omega  ^{-1}   \mathcal K_\omega \big] \;\; & r_{ \Omega_2}   W_\chi   \\
 {\bf N}_\chi^+  \mathring{E}   \;\;\;     & - \mathcal V_\chi + \big[(\beta - \mu)I + \mathcal W_\chi \big]
 { \mathcal M}_\omega^{-1}  { \mathcal K_\omega} \;\;& (\beta - \mu)I +{ \mathcal W_\chi}  \\
0\;\;\;&  0\;\;& - { \mathcal M_\omega}
\end{array}
\right]\,,
\quad
 {\bf C}:=\left[
\begin{array}{ccc}
 I \;\; & 0 \;\;& 0  \\
                            0 & I  & 0    \\
0\;\;&   - {\mathcal{M}}^{-1}_\omega\,{ \mathcal K_\omega} \;\;& I
\end{array}
\right]\,.
\end{align}
Evidently, the   triangular matrix  operator
\[
  {\bf C }\;:\; H^{ 1 }({\Omega_2} )\times H^{ -\frac{1}{2}}(S)\times H^{ \frac{1}{2}}(S)
  \to H^{ 1 }({\Omega_2} )\times H^{ -\frac{1}{2}}(S)\times H^{ \frac{1}{2}}(S)
\]
is invertible.
Since the operator ${ \mathcal M_\omega} :  H^{\frac{1}{2}}(S)\to H^{-\frac{1}{2}}(S)$
is also invertible due to Lemma \ref{L.C2},
  from  \eqref{4.18-w} it follows that the  block-triangular matrix  operator
\[
  {\bf B }\;:\; H^{ 1 }({\Omega_2} )\times H^{ -\frac{1}{2}}(S)\times H^{ \frac{1}{2}}(S)
  \to H^{ 1 }({\Omega_2} )\times H^{ \frac{1}{2}}(S)\times H^{- \frac{1}{2}}(S)
\]
and, consequently  operator \eqref{4.17-w}  is invertible if and only if the following operator is invertible
\begin{align}
\label{4.22-w}
 & {\bf D }\;:\; H^{ 1 }({\Omega_2} )\times H^{ -\frac{1}{2}}(S)
  \to H^{ 1 }({\Omega_2} )\times H^{ \frac{1}{2}}(S),
\\
 \label{4.23-w}
&
  {\bf D }
=\big[{\bf D } _{kj} { \big]_{k,j=1}^2}
:=\left[
\begin{array}{cc}
r_{ \Omega_2} ( \beta\,  I  + { {\bf N}_\chi })\mathring{E}  \;\;\;& r_{ \Omega_2} \big[-{  V_\chi}+{ W_\chi}
{\mathcal M}_\omega^{-1}  { \mathcal K_\omega}\big]    \\
 { {\bf N}_\chi^+} \mathring{E}       \;\;\; & -{ \mathcal V_\chi}+ \big[(\beta - \mu)I +{ \mathcal W_\chi}\big]
 {\mathcal M}_\omega^{-1}  { \mathcal K_\omega}
\end{array}
\right] \,.
\end{align}
Further we apply the Vishik-Eskin approach, developed in Eskin\cite{Esk},  and establish that operator
\eqref{4.22-w} is invertible.
\\
The proof is performed  in  four steps.

 \textsf{Step 1.} Here we show that the operator
\begin{align}
\label{4.24-w}
{ {\bf D}_{11}=} r_{_{\!\Omega_2}}( \beta\,  I  + {  {\bf N}_\chi })\mathring{E} \,:\,H^{1}({\Omega_2})\to H^{1}({\Omega_2})
 \end{align}
 is Fredholm with zero index.

 In view of  \eqref{2.34-2}   the principal homogeneous symbol of the operator $ \beta\,  I  + {  {\bf N}_\chi }$ can be written  as
\begin{eqnarray}
\label{ssss}
{\mathfrak{S}}_0({ {\bf D}_{11}};y,\xi)=
{\mathfrak{S}}_0( \beta\, I  + { {\bf N}_\chi };y,\xi)=\frac{A_2(y,\xi)}{ \Delta(\xi)}=\frac{a^{(2)}_{kl}(y)\xi_k \xi_l}{|\xi|^2}> 0,
\;\;\;\Delta(\xi) :=|\xi|^2,\;\;\;\;y\in \overline{\Omega}_2,\;\;\;\xi\in \mathbb R^3\setminus{\{0\}}.
\end{eqnarray}
 Since  the  symbol ${\mathfrak{S}}_0({ {\bf D}_{11}};y,\xi)$ given by \eqref{ssss} is an even  rational homogeneous function of order $0$  in $\xi$ it follows that its  factorization index $\varkappa$   equals to zero
(see Eskin\cite{Esk}, $\S\, 6$ ).
Moreover,  the operator $ \beta\, I  +  {\bf N}_\chi $ possesses
the transmission property.
Therefore we can apply the theory of pseudodifferential operators  satisfying the transmission property
to deduce that operator \eqref{4.24-w} is Fredholm
(see  Eskin\cite{Esk}, Theorem 11.1 and  Lemma 23.9;   Boutet de Monvel\cite{BdM}).

To show that Ind$\,{ {\bf D}_{11}}  =0$
we use the fact that the operators  ${ {\bf D}_{11}}$  and  ${ {\bf D}_{11,t}}$, where
$$
{ {\bf D}_{11,t}}=r_{_{\!\Omega_2}}[\,(1-t)\, I +t\,( \beta\,  I +{ {\bf N}_\chi^+} )\,]\,\mathring{E},\quad t\in [0,1],
$$
  are homotopic.  Evidently ${ {\bf D}_{11,0}}= I$ and ${ {\bf D}_{11,1}}= { {\bf D}_{11}}$.
In view of  \eqref{2.34-2} and \eqref{ssss},
\begin{eqnarray*}
&
\dst
{\mathfrak{S}}_0({ {\bf D}_{11,t}};y,\xi)=\frac{(1-t)\Delta(\xi)+t\;A_2(y,\xi)}{\Delta(\xi)}>0
\end{eqnarray*}
for all $t\in [0,1]$, for all $y\in \overline{\Omega}_2$, and for all $\xi\in \mathbb R^3\setminus{\{0\}}$,
and consequently the operator  ${ {\bf D}_{11,t}}$ is elliptic.

Since ${\mathfrak{S}}_0({ {\bf D}_{11,t}};y,\xi)$ is rational, even, and homogeneous of order zero in $\xi$,
  we conclude that the operator
$
{ {\bf D}_{11,t}} \; : \; H^{1}({\Omega_2})\to H^{1}({\Omega_2})
$
is continuous Fredholm operator for all $t\in[0,1]$. Therefore Ind$\,{ {\bf D}_{11,t}}$ is the same for all $t\in[0,1]$.
On the other hand, due to the equality ${ {\bf D}_{11,0}}= I$ we get
$
  {\rm{Ind}} \,{ {\bf D}_{11}}={\rm{Ind}}\, { {\bf D}_{11,1}}= {\rm{Ind}}\, { {\bf D}_{11,t}}=\,
  {\rm{Ind}}\, { {\bf D}_{11,0}}=0.
$

 \textsf{Step 2.} Now we show that the  operator ${\bf D}$ defined by \eqref{4.22-w}--\eqref{4.23-w}  is Fredholm.   To this end,
we apply the local principle (see, e.g., Eskin\cite{Esk}, $\S\, 19$ and  $\S\, 22$).

Let $U_j$  be an open  neighbourhood  of a fixed point  ${\widetilde{y}}\in {\mathbb{R}}^3$
and let   $\psi_0^{(j)},\varphi_0^{(j)}\in \mathcal{D}(U_j)$ be such that
${\rm{supp}}\,\psi_0^{(j)}\cap {\rm{supp}}\,\varphi_0^{(j)}\neq {\varnothing}$
contains some open neighbourhood $U'_j\subset U_j$ of the point $y_0$. \\
Consider the operator $\psi_0^{(j)} {\bf D}\,\varphi_0^{(j)}.$\\
We separate two possible cases: 1) ${\widetilde{y}}\in {{\Omega_2}}$ and 2) ${\widetilde{y}}\in S$.\\
In the first case, when ${\widetilde{y}}\in {{\Omega_2}}$ we can choose a
 neighbourhood   $\overline{U}_j$ of the point  ${\widetilde{y}}$ such that
  $\overline{U}_j\subset{{\Omega_2}}$. Then the operator $\psi_0^{(j)} {\bf D}\,\varphi_0^{(j)}$ is equivalent
to the  operator $\psi_0^{(j)}{ {\bf D}_{11}}\,\varphi_0^{(j)}$,
where ${ {\bf D}_{11}}$ is defined by \eqref{4.24-w}. As we have already shown  in Step 1,
this operator is Fredholm with zero index.

In the second case, when ${\widetilde{y}}\in S$, we need to check that  the \v{S}apiro-Lopatinski\u{i} type
condition for the operator $ {\bf D}$ is fulfilled, i.e., we have to show that the so-called
{\it boundary symbol} which is constructed by means of  the principal homogeneous symbols of the
pseudodifferential operators involved in \eqref{4.23-w}
is nonsingular (see Eskin\cite{Esk}, $\S$12).
To write   the boundary symbol function explicitly,
we assume that the symbols  are ``frozen"  at the point ${\widetilde{y}}\in S$ considered as the origin ${O}\,'$ of some local
coordinate system. Denote by  ${\widetilde{a}^{(2)} _{kl}}({\widetilde{y}})$ the corresponding ``frozen" coefficients of the principal part of
 the differential operator $A_2(y,\pa_y)$ subjected to
a translation and an orthogonal transformation related to the  local co-ordinate system.
If the matrix of the transformation of the original co-ordinate system $Oy_1y_2y_3$
to the new one $O\,'\eta_1\eta_2\eta_3$ with $O\,'={\widetilde{y}}$
is an orthogonal matrix $\Lambda({\widetilde{y}}):=[\lambda_{kl}({\widetilde{y}})]_{3\times 3}$,
which transforms the outward unit normal vector $n^\top({\widetilde{y}})$
into the vector $\mathbf{e}_3=(0,0,-1)^\top$ (the outward unit normal vector to $\mathbb{R}^3_{+}$), i.e., $n^\top({\widetilde{y}})=\Lambda({\widetilde{y}})\,\mathbf{e}_3$,
then
$y={\widetilde{y}}+\Lambda({\widetilde{y}})\,\eta,$ $\nabla _y=\Lambda({\widetilde{y}}) \, \nabla_\eta$, and
\begin{align}
\label{d-3-0}
&
\lambda_{k3}({\widetilde{y}})=- n_k({\widetilde{y}}),
\quad
  \widetilde{a}^{(2)} _{kl}({\widetilde{y}})=\lambda_{pk}({\widetilde{y}}) \,
  a^{(2)} _{pq}({\widetilde{y}})\,\lambda_{ql}({\widetilde{y}})=\{\Lambda^\top({\widetilde{y}}) \,\mathbf{a}_2({\widetilde{y}}) \,\Lambda({\widetilde{y}})\}_{kl}, \;\;\;\;k,l=1,2,3.
  \end{align}
Evidently the matrix
$
\widetilde{\mathbf{a}}_2({\widetilde{y}})=[\widetilde{a}^{(2)} _{kl}({\widetilde{y}})]_{k,j=1}^3:= \Lambda^\top({\widetilde{y}}) \,\mathbf{a}_2({\widetilde{y}}) \,\Lambda({\widetilde{y}})
$
 is positive definite, since $\mathbf{a}_2({\widetilde{y}})$ is positive definite and  for arbitrary ${\widetilde{y}}\in S$ we have
\begin{eqnarray*}
&
\widetilde{{\beta}} ({\widetilde{y}}) =\frac{1}{3}\big[\,{\widetilde{a}^{(2)} _{11}}({\widetilde{y}})+
{\widetilde{a}^{(2)} _{22}}({\widetilde{y}})+{\widetilde{a}^{(2)} _{33}}({\widetilde{y}})\,\big]>0,
\qquad
  {\widetilde{a} ^{(2)} _{33}}({\widetilde{y}})   =\lambda_{p3} \,a^{(2)} _{pq}({\widetilde{y}})\,\lambda_{q3}=
a^{(2)} _{pq}({\widetilde{y}})\, n_p({\widetilde{y}}) \,n_q({\widetilde{y}})=2\,\widetilde{\mu}({\widetilde{y}})>0,
\\
&
 T_2({\widetilde{y}}, \pa_y) =a^{(2)} _{pl}({\widetilde{y}})\,n_p({\widetilde{y}})\,\pa_{y_l}
  =
n_p({\widetilde{y}})\,{a}^{(2)} _{pl}({\widetilde{y}})\,\lambda_{\,lq}({\widetilde{y}}) \, \pa_{\eta_q}
=
-\lambda_{\,p3}({\widetilde{y}}) \,{a}^{(2)} _{pl}({\widetilde{y}})\,\lambda_{\,lq}({\widetilde{y}}) \, \pa_{\eta_q}=-  {\widetilde{a}^{(2)} _{3q}}({\widetilde{y}}) \, \pa_{\eta_q}\,,
\end{eqnarray*}
due to  \eqref{d-3-0}  and \eqref{3.8j}.

Further  let us note that the layer potentials can be represented by means of the volume potential
(see, e.g. Chkadua et al\cite{CMN6})
\begin{align}
&
 \label{4.26-w}
  {  V_\chi} \,\psi(y)=-{ {\bf P}_\chi}(\gamma^*\psi)(y), \;\; y\in \mathbb{R}^3\setminus S,\\
  &
  \label{4.27-w}
  { W_\chi}\,\varphi(y)
 =- \pa_{y_j}\,{  V_\chi}(a^{(2)}_{kj} \,n_k \,\varphi)=\pa_{y_j}\,
 { {\bf P}_\chi}\big(\gamma^*(a^{(2)}_{kj} \,n_k \,\varphi) \big)(y)\,,
 \;\; y\in \mathbb{R}^3\setminus S,
\end{align}
 where  $\gamma^*: H^{\ha-t}(S)\to H^{-t}_S$, $t>1/2$, is the adjoint operator to the trace operator $\gamma$, i.e.,
$
\langle \gamma^*\psi \,,\, h\rangle_{\mathbb{R}^3}:= \langle \psi \,,\,
 \gamma \,h\rangle_{_{S}}
$
for all $h\in{\cal D}(\mathbb{R}^3)$.
Here $H^{-t}_S:=\{f\in H^{-t}(\R^3):{\rm supp} f\subset S\}$,
and $H^{-t}_S$ does not contain non-zero elements,  when $t\leqslant\ha$ (see Lemma 3.39 in McLean\cite{McL}, Theorem 2.10(i) in Mikhailov\cite{Mik3}).

In view of \eqref{4.26-w}--\eqref{4.27-w}, the operator $\mathbf{D}_{12}$ in \eqref{4.23-w} can be represented as
\begin{align}
\label{4.30-w}
\mathbf{D}_{12}&=-{  V_\chi}(\psi_2) +{ W_\chi}\big({\mathcal M}_\omega^{-1}  { \mathcal K_\omega}\psi_2\big)
= { {\bf P}_\chi}(\gamma^*\psi_2 )
+\pa_{y_j}\,{ {\bf P}_\chi}\big(\gamma^*(a^{(2)}_{kj} \,n_k \,{\mathcal M}_\omega^{-1}  { \mathcal K_\omega}\psi_2 )\big)
\end{align}
and its  principal homogeneous symbol   due to the above formulas
and Remark \ref{rC.3} in Appendix \ref{Appendix C} can be written as
 \begin{eqnarray}
  \dst
\label{4.31-w}
\mathfrak{S}({\bf D }_{12}; {\widetilde{y}}, \xi)\equiv {\bf R}_{12}({\widetilde{y}}, \xi):=
 -\frac{1}{|\xi|^2}+
\frac{i\xi_l {\widetilde{a}^{(2)} _{3l}}({\widetilde{y}})} {|\xi|^2} \,
 {2 \,{\mathfrak{S}}_0\big({\mathcal V_\omega};{\widetilde{y}}, \xi'\big)},
\;\;  \xi=(\xi',\xi_3),
 \;\;\xi'=(\xi_1,\xi_2)\in {\mathbb{R}}^2\setminus \{0\},
 \end{eqnarray}
since the principal homogeneous symbol of the operator ${ {\bf P}_\chi}$  reads as
$\mathfrak{S}_0({\bf P};\xi)=-{\cal F}_{z\to \xi}[(4\pi|z|)^{-1}]=-|\xi|^{-2}.$

Due to the Vishik-Eskin approach, now we have to construct the following matrix  associated with the principal
homogeneous symbols of the operators involved in  ${\bf D}$  at the  local  co-ordinate
system introduced above
\begin{eqnarray}
\label{4.32-w}
{\bf R}( {\widetilde{y}},\xi):=
\left[
\begin{array}{ll}
{\bf R}_{11}({\widetilde{y}}, \xi) \;\;\;\;& {\bf R}_{12}({\widetilde{y}}, \xi)\\
{\bf R}_{21}({\widetilde{y}}, \xi) \;\;\;\;& {\bf R}_{22}({\widetilde{y}}, \xi')
\end{array}
\right]\,,
\end{eqnarray}
where ${\bf R}_{11}( {\widetilde{y}}, \xi)$ is the principal homogeneous symbol
of the operator ${ {\bf D}_{11}}=\beta\, I  + {  {\bf N}_\chi }$,
\begin{align}
\label{4.33-w}
{\bf R}_{11}( {\widetilde{y}}, \xi)=   {\mathfrak{S}}_0({ {\bf D}_{11}};{\widetilde{y}},\xi)
\equiv  {\mathfrak{S}}_0( \beta\, I  + { {\bf N}_\chi };{\widetilde{y}},\xi)
=\frac{A_2(\xi)}{\Delta(\xi)}=\frac{ {\widetilde{a}^{(2)} _{kl}}({\widetilde{y}})\xi_k \xi_l}{|\xi|^2}> 0,
 \;\;\;\xi\in {\mathbb{R}}^3\setminus \{0\},
\end{align}
${\bf R}_{12}({\widetilde{y}}, \xi)$ is the principal homogeneous symbol
of   operator \eqref{4.30-w} and is given by \eqref{4.31-w},
${\bf R}_{21}({\widetilde{y}}, \xi)$ is the principal homogeneous symbol
of the operator $  {\bf N}_\chi $,
 \begin{equation}
\label{4.35-w}
 {\bf R}_{21}( {\widetilde{y}}, \xi) :=
 {\mathfrak{S}}_0(  {\bf N}_\chi ;{\widetilde{y}},\xi)
 =\frac{A_2(\widetilde{y},\xi)}{\Delta(\xi)}-\widetilde{\beta}(\widetilde{y})=
 \frac{ {\widetilde{a}^{(2)} _{kl}}({\widetilde{y}})\xi_k \xi_l-\widetilde{\beta}(\widetilde{y})\,|\xi|^2}{|\xi|^2}\,,
 \end{equation}
 ${\bf R}_{22}({\widetilde{y}}, \xi)$ is the principal homogeneous symbol
of the boundary operator $ {\bf D}_{22}$,
which due to  \eqref{4.23-w},  \eqref{B-d-1}, \eqref{B-d-2-1}, and \eqref{C-2} is written as
\begin{align}
&
\label{4.36-w}
    {\bf R}_{22}( {\widetilde{y}},\xi') :={\mathfrak{S}}_0\big( -{ \mathcal V_\chi}+
     [(\beta - \mu)\,I +{ \mathcal W_\chi} ]{\mathcal M}_\omega^{-1}
      { \mathcal K_\omega};{\widetilde{y}},\xi'\big) \nonumber\\
&
\hskip3mm
= -{\mathfrak{S}}_0 \big({ \mathcal V_\chi};{\widetilde{y}},\xi')+ \frac{1}{2}\,{\mathfrak{S}}_0
\big( (\beta - \mu)\,I +{ \mathcal W_\chi} ;{\widetilde{y}},\xi'\big)
{\mathfrak{S}}_0 \big({\mathcal M}_\omega^{-1};{\widetilde{y}},\xi'\big)  \nonumber\\
&
\hskip3mm
=-\frac{1}{2\,|\xi'|}-\Big[
 2\widetilde{\beta}({\widetilde{y}})- {\widetilde{a}^{(2)} _{33}}({\widetilde{y}})-i
 \sum\limits_{l=1}^{2} {\widetilde{a}^{(2)} _{3l}}({\widetilde{y}})\,\frac{\xi_l}{|\xi'|}
 \Big] {\mathfrak{S}}_0\big({ \mathcal V_\omega};{\widetilde{y}}, \xi'\big).
 \end{align}
Below we drop the arguments ${\widetilde{y}}$   and  $\xi$ when it does not lead to misunderstanding.

Now we show that the \v{S}apiro-Lopatinski\u{i} type condition for the operator $ {\bf D}$ is satisfied,
i.e., the boundary symbol (see Eskin\cite{Esk}, $\S$12, formulas (12.25), (12.27))
\begin{eqnarray}
\label{4.39-w}
{\bf S}_{\mathbf{D}}( \xi')=-\Pi\,'\Big[\,\frac{{\bf R}_{21}}
{{\bf R}_{11}^{^{(+)}}}\;\Pi^+\Big(\frac{{\bf R}_{12}}
{{\bf R}_{11}^{^{(-)}}}\Big)\Big]( \xi'  )
+{\bf R}_{22}( \xi' )
\end{eqnarray}
associated with the operator ${\bf D}$
does not vanish for  $\xi' \neq 0$. Here,  ${\bf R}_{11} ^{^{(+)}}(\xi',\xi_3)$ and
${\bf R}_{11} ^{^{(-)}}(\xi',\xi_3)$ denote the ``plus" and ``minus"
factors respectively in the factorization of the symbol ${\bf R}_{11}(\xi',\xi_3)$ with respect
to the  variable $\xi_3$ in the complex $\xi_3$ plane, while $\Pi^+$
is  a Cauchy type integral operator
\[
\Pi^+(h)(\xi)=\frac{i}{2\pi}\;\lim\limits_{t\to 0+}\;
\int_{-\infty}^{+\infty}\frac{h(\xi',\eta_3)\,d\eta_3}{\xi_3+i\,t-\eta_3},
\]
and $\Pi\,'$ is the operator defined  on the set of rational functions
\[
\Pi'(g)(\xi')=-\frac{1}{2\pi}\int_{\ell^-}g(\xi',\xi_3)\,d\xi_3,
\]
where $\ell^-$ is a contour in the lower complex half-plane orientated counterclockwise and enclosing
all  poles of the rational  function $g$ with respect to $\xi_3$.

Denote the roots of the equation $A_2(\xi)\equiv {\widetilde{a}^{(2)} _{kl}}\xi_k\xi_l=0$ with respect to $\xi_3$
by  $\tau(\xi')=\alpha_1-i\,\alpha_2$ and
$\overline{\tau(\xi')}=\alpha_1+i\,\alpha_2$, where we assume that  $\alpha_2>0$.
Then
\begin{align}
\label{4.40-w}
&
A_2(\xi)= {\widetilde{a}^{(2)} _{kl}}\xi_k\xi_l={\widetilde{a}^{(2)} _{33}}\,[\xi_3-\tau(\xi')]\,[\xi_3-\overline{\tau (\xi')}]
=A^{^{(+)}}_2(\xi)\,A^{^{(-)}}_2(\xi),\\
 \label{4.41-w}
&
A^{^{(+)}}_2(\xi):={\widetilde{a}^{(2)} _{33}}\,[\xi_3-\tau(\xi')],\;\;
A^{^{ (-)}}_2(\xi):=\xi_3-\overline{\tau(\xi')}, \\
&
\label{4.41-w-1}
\overline{\tau(\xi')}= \alpha_1(\xi')+i\,\alpha_2(\xi'),\;\;\;\;\alpha_2(\xi')>0,
 \;\;\;\xi'\in \mathbb{R}^2\setminus \{0\}.
  \end{align}
Since
 $
  \Delta(\xi)=|\xi|^2= \Delta\!{^{^{(+)}}}(\xi)\, \Delta\!{^{^{(-)}}}(\xi)$ with
  $\Delta\!{^{^{(\pm)}}}(\xi):=\xi_3 \pm i\,|\xi'|, $
 we get the following factorization of the symbol ${\bf R}_{11}(\xi)$,
\begin{equation}
\label{4.43-w}
{\bf R}_{11}(\xi)={\bf R}_{11}^{^{(+)}}(\xi)\,{\bf R}_{11}^{^{(-)}}(\xi),\;\; \;\;
 {\bf R}_{11}^{^{(+)}}(\xi):=\frac{A_2^{^{(+)}}(\xi)}{\Delta\!{^{^{(+)}}}(\xi)},\;\;
{\bf R}_{11}^{^{(-)}}(\xi):=\frac{A_2^{^{(-)}}(\xi)}{\Delta\!{^{^{(-)}}}(\xi)} .
\end{equation}
Using formulas \eqref{4.33-w}--\eqref{4.36-w} and  \eqref{4.40-w}--\eqref{4.43-w}, we rewrite \eqref{4.39-w}  as
  \begin{align}
{\bf S}_{\bf D}( \xi')=&\,-\Pi\,'\Big\{\,\Big(  \frac{A_2 (\xi)}{\Delta (\xi)}-\widetilde{\beta} \Big) \frac{\Delta\!{^{^{(+)}}}(\xi)}{A_2^{^{(+)}}(\xi)}
\;\Pi^+\Big[\Big(-\frac{1}{\Delta (\xi)}-\frac{i\,\,\xi_l {\widetilde{a}^{(2)} _{3l}}}{\Delta (\xi)}[-2{\mathfrak{S}}_0\big({\mathcal V_\omega};\xi'\big)]\Big)
\frac{\Delta\!{^{^{(-)}}}(\xi)}{A_2^{^{(-)}}(\xi)}\Big]  \Big\}
\nonumber\\
\label{4.44-w}
&\,-\frac{1}{2\,|\xi'|}+\frac{1}{2}\Big( [2\widetilde{\beta}-{\widetilde{a}^{(2)} _{33}}]-i\sum\limits_{l=1}^{2}
{\widetilde{a}^{(2)} _{3l}}({\widetilde{y}})\,\frac{\xi_l}{|\xi'|}
 \Big)\,[-2 {\mathfrak{S}}_0\big({\mathcal V_\omega}; \xi'\big)]
 = {\bf S}_{\bf D}^{(1)}( \xi')+{\bf S}_{\bf D}^{(2)}( \xi')\;[-2{\mathfrak{S}}_0\big({ \mathcal V_\omega};\xi'\big)],
\end{align}
where
  \begin{align}
  \label{4.45-w}
{\bf S}_{\bf D}^{(1)}( \xi'):=&\,
 -\Pi\,'\Big[\,\Big(  \frac{A_2 (\xi)}{\Delta (\xi)}-\widetilde{\beta} \Big)
 \frac{\Delta\!{^{^{(+)}}}(\xi)}{A_2^{^{(+)}}(\xi)}
\;\Pi^+\Big(-\frac{1}{\Delta (\xi)}\,\frac{\Delta\!{^{^{(-)}}}(\xi)}{A_2^{^{(-)}}(\xi)}\Big)  \Big]
-\frac{1}{2\,|\xi'|}
 \nonumber\\
=&\,
 \Pi\,'\Big[\,\Big( \frac{A_2^{^{(-)}}(\xi)}{\Delta\!{^{^{(-)}}}(\xi)}-\widetilde{\beta}  \frac{\Delta\!{^{^{(+)}}}(\xi)}{A_2^{^{(+)}}(\xi)}\Big)
\;\Pi^+\Big(\frac{1}{\Delta\!{^{^{(+)}}}(\xi) A_2^{^{(-)}}(\xi)}\Big)  \Big]
-\frac{1}{2\,|\xi'|} \,,\\
  \label{4.46-w}
 {\bf S}_{\bf D}^{(2)}( \xi'):=&\, -\Pi\,'\Big[\,\Big(  \frac{A_2(\xi)}{\Delta (\xi)}-
 \widetilde{\beta} \Big) \frac{\Delta\!{^{^{(+)}}}(\xi)}{A_2^{^{(+)}}(\xi)}
\;\Pi^+\Big(-\frac{i\,\,\xi_l {\widetilde{a}^{(2)} _{3l}}}{\Delta
(\xi)}\,\frac{\Delta\!{^{^{(-)}}}(\xi)}{A_2^{^{(-)}}(\xi)}\Big)  \Big]
+\frac{1}{2} \Big[(2\widetilde{\beta}-{\widetilde{a}^{(2)} _{33}})
-i\sum\limits_{l=1}^{2}{\widetilde{a}^{(2)} _{3l}}\,\frac{\xi_l}{|\xi'|} \Big]
\nonumber\\
=&\,  \Pi\,'\Big[\,\Big(  \frac{A_2^{^{(-)}}(\xi)}{\Delta\!{^{^{(-)}}}(\xi)}-\widetilde{\beta } \frac{\Delta\!{^{^{(+)}}}(\xi)}{A_2^{^{(+)}}(\xi)}\Big)
\;\Pi^+\Big( \frac{i\,\,\xi_l {\widetilde{a}^{(2)}_{3l}}} {\Delta\!{^{^{(+)}}}(\xi) A_2^{^{(-)}}(\xi)}\Big)  \Big]
+\frac{1}{2} \Big[(2\widetilde{\beta}-{\widetilde{a}^{(2)} _{33}})
-i\sum\limits_{l=1}^{2}{\widetilde{a}^{(2)}_{3l}}\,\frac{\xi_l}{|\xi'|} \Big]
\end{align}
With the help of residue theorem, by direct calculations we find
 \begin{align}
\label{4.47-w}
&\Pi^+\Big(\frac{1}{\Delta\!{^{^{(+)}}} \,A_2^{^{(-)}}}  \Big)(\xi)
=\frac{i}{2 \pi}\;\lim\limits_{t\to 0+}
\int_{-\infty}^{+\infty}\frac{d\eta_3}{\Delta^{^{(+)}}(\xi',\eta_3) A_2^{^{(-)}}(\xi',\eta_3) (\xi_3+i\,t-\eta_3)}
 = \frac{i}{2 \pi} \lim\limits_{t\to 0+}
\int_{-\infty}^{+\infty}\frac{d\eta_3}{(\eta_3+i |\xi'|)\,(\eta_3-\overline{\tau(\xi')}) (\xi_3+i\,t-\eta_3)}
\nonumber\\
&
\hskip1cm
= -\frac{i}{2\,\pi} \lim\limits_{t\to 0+}
\int_{\ell^-}\frac{d\zeta}{(\zeta+i\,|\xi'|) (\zeta-\overline{\tau(\xi')})\,(\xi_3+i\,t-\zeta)}
  =-\frac{i}{2\,\pi} \lim\limits_{t\to 0+}
\frac{2\,\pi i}{(-i\,|\xi'|-\overline{\tau(\xi')}) (\xi_3+i\,t+i\,|\xi'|)}
\nonumber\\
&
\hskip1cm =-\frac{1}{(i\,|\xi'|+\overline{\tau(\xi')})\,(\xi_3+i\,|\xi'|)}\,;
\\
\label{4.48-w}
 & \Pi'\Big[\Big(\frac{A_2^{^{(-)}}}{\Delta\!{^{^{(-)}}}}\Big)\,\Pi^+
\Big(\frac{1}{\Delta\!{^{^{(+)}}}\,A_2^{^{(-)}}} \Big)\Big](\xi')
 =\frac{1}{2\,\pi}\int_{\ell^-}\frac{\xi_3-\overline{\tau(\xi')}}{\xi_3-i\,|\xi'|}\,
\frac{d\xi_3}{(i\,|\xi'|+\overline{\tau(\xi')})\,(\xi_3+i\,|\xi'|)}
=\frac{1}{2\,\pi\,[i\,|\xi'|+\overline{\tau(\xi')}]}
\int_{\ell^-}\frac{\xi_3-\overline{\tau(\xi')}}{\xi_3^2+|\xi'|^2}\,d\xi_3
\nonumber\\
&
\hskip1cm
=\frac{1}{2\,\pi\,[i\,|\xi'|+\overline{\tau(\xi')}]}\int_{\ell^-}\Big[\frac{1}{\xi_3-i\,|\xi'|}
-\frac{i\,|\xi'|}{\xi_3^2+|\xi'|^2}
-\frac{\overline{\tau(\xi')}}{\xi_3^2+|\xi'|^2} \Big]\,d\xi_3
=-\frac{1}{2\,\pi}\int_{\ell^-}\frac{d\xi_3}{\xi_3^2+|\xi'|^2}=\frac{1}{2\,|\xi'|},
\\
&
\label{4.49-w}
\widetilde{\beta}\,\Pi'\Big[\Big(\frac{\Delta^{^{(+)}}}{A_2^{^{(+)}}}\Big) \Pi^+
\Big(\frac{1}{ \Delta^{^{(+)}} A_2^{^{(-)}}} \Big)\Big](\xi')
=\frac{\widetilde{\beta}}{2 \pi}\int_{\ell^-}\frac{\xi_3+i\,|\xi'|}
{{\widetilde{a}^{(2)}_{33}} [\xi_3-\tau(\xi')]}\;
\frac{d\xi_3}{(i |\xi'|+\overline{\tau(\xi')})\,(\xi_3+i\,|\xi'|)}
 =\frac{\widetilde{\beta}}{2 \pi {\widetilde{a}^{(2)}_{33}}
[i |\xi'|+\overline{\tau(\xi')}]}\int_{\ell^-}\frac{d\xi_3}{\xi_3-\tau(\xi')}
\nonumber
\\
&
\hskip1cm=\frac{\widetilde{\beta}}{2\,\pi\,{\widetilde{a}^{(2)}_{33}}\,[i\,|\xi'|+\overline{\tau(\xi')}]}
\int_{\ell^-}\Big[\frac{1}{\xi_3+\tau(\xi')}+
\frac{2\,\tau(\xi')}{\xi_3^2-\tau^2(\xi')}\Big]\,d\xi_3
=\frac{\widetilde{\beta}}{2\,\pi\,{\widetilde{a}^{(2)}_{33}}\,[i\,|\xi'|+\overline{\tau(\xi')}]}
\int_{\ell^-}\frac{2\,\widetilde{\beta}\,\tau(\xi')}{\xi_3^2-\tau^2(\xi')}\;d\xi_3
\nonumber
\\
&
\hskip1cm=\frac{2\,\,\widetilde{\beta}\,\tau(\xi')}{2\,\pi\,
{\widetilde{a}^{(2)}_{33}}\,[i\,|\xi'|+\overline{\tau(\xi')}]}\;\;
\frac{2\,\pi\,i}{2\,\tau(\xi')}
=\frac{i\,\widetilde{\beta}}{{\widetilde{a}^{(2)}_{33}}\,[i\,|\xi'|+\overline{\tau(\xi')}]}\,.
\end{align}
Therefore from \eqref{4.45-w} in view of \eqref{4.47-w}--\eqref{4.49-w}  and \eqref{4.41-w-1} we get
\begin{align}
\label{4.50-w}
{\bf S}_{\bf D}^{(1)}( \xi')=-\frac{i\,\,\widetilde{\beta}}{{\widetilde{a}^{(2)}_{33}}\, [i\,|\xi'|+\overline{\tau(\xi')}]}
=-\frac{\,\widetilde{\beta}\,(\alpha_2+|\xi'|)+i\,\alpha_1\, \,\widetilde{\beta}}{{\widetilde{a}^{(2)}_{33}}\,[\alpha_1^2+(\alpha_2+|\xi'|)^2]}\;\;\;\text{for}\;\;\; \xi'\neq 0\,.
\end{align}
Now we  evaluate the function ${\bf S}_{\bf D}^{(2)}$.
Let
 $
  \vartheta(\xi'):=\sum\limits_{l=1}^{2}{\widetilde{a}^{(2)}_{3l}}\,\xi_l\,.
$
 Since $\tau$ and $\overline{\tau}$ are roots of the quadratic  equation
 $$
{ A_2(\xi)}\equiv \sum\limits_{k,l=1}^{3}{\widetilde{a}^{(2)} _{kl}}\xi_k\xi_l =
{\widetilde{a}^{(2)} _{33}} \xi_3^2+2\, \vartheta(\xi')\,\xi_3+
\sum\limits_{k,l=1}^{2}{\widetilde{a}^{(2)} _{kl}}\xi_k\xi_l=0,
$$
 we have
\begin{eqnarray}
 \label{4.52-w}
 2\, \vartheta(\xi') =-{\widetilde{a}^{(2)}_{33}}\,(\tau+\overline{\tau}).
 \end{eqnarray}
Again by direct calculations we find
\begin{align*}
 \Pi^+\Big( \frac{i\,\,\xi_l {\widetilde{a}^{(2)} _{3l}}}
{\Delta\!{^{^{(+)}}}(\xi) A_2^{^{(-)}}(\xi)}\Big)  \Big]
&=
\Pi^+\Big( \frac{i\,{\widetilde{a}^{(2)} _{3l}}\xi_l}{\xi_3+i\,|\xi'|}\,
\frac{1}{\xi_3-\overline{\tau}}\Big)(\xi')
=\frac{i}{2\,\pi}\;\lim\limits_{t\to 0+}\;
\int_{-\infty}^{+\infty}
\frac{[i\,\vartheta(\xi')+i\,{\widetilde{a}^{(2)} _{33}}\,\eta_3]\,d\eta_3}
 {(\eta_3+i\,|\xi'|)\,(\eta_3-\overline{\tau})\,(\xi_3+i\,t-\eta_3)}
\nonumber
\\
&
= -\frac{i\,\vartheta(\xi')+{\widetilde{a}^{(2)}_{33}}|\xi'|}{(\overline{\tau}+i\,|\xi'|)\,(\xi_3+i\,|\xi'|)}\,.
\end{align*}
Further we have
\begin{align}
&
 \Pi\,'\Big[\,\Big(  \frac{A_2^{^{(-)}}(\xi)}{\Delta\!{^{^{(-)}}}(\xi)}-\widetilde{\beta}  \frac{\Delta\!{^{^{(+)}}}(\xi)}{A_2^{^{(+)}}(\xi)}\Big)
\;\Pi^+\Big( \frac{i\,\,\xi_l {\widetilde{a}^{(2)}_{3l}}}
{\Delta\!{^{^{(+)}}}(\xi) A_2^{^{(-)}}(\xi)}\Big)  \Big]
  =\frac{1}{2\,\pi}\int _{\ell^-}
\Big(  \frac{A_2^{^{(-)}}(\xi)}{\Delta\!{^{^{(-)}}}(\xi)}-\widetilde{\beta}  \frac{\Delta\!{^{^{(+)}}}(\xi)}{A_2^{^{(+)}}(\xi)}\Big)
 \,\frac{i\,\vartheta(\xi')+{\widetilde{a}^{(2)}_{33}}|\xi'|}{(\overline{\tau}+i\,|\xi'|)\,(\xi_3+
 i\,|\xi'|)}\,d\xi_3
 \nonumber
 \\[2mm]
 \label{4.54-w}
 &
 \hskip1cm
=\frac{1}{2\,\pi}\,\frac{i\,\vartheta(\xi')+{\widetilde{a}^{(2)}_{33}}|\xi'|}{\overline{\tau}+i\,|\xi'|}
 \int_{\ell^-}\Big[
 \frac{\xi_3-\overline{\tau}}{\xi_3^2+|\xi'|^2}-\frac{\widetilde{\beta}}
 {{\widetilde{a}^{(2)}_{33}}\,(\xi_3-\tau)} \Big]\,d\xi_3
    =
      \frac{i\,\vartheta(\xi')+{\widetilde{a}^{(2)}_{33}}|\xi'|}{2\,|\xi'|}
         - \frac{i\,\widetilde{\beta}}{{\widetilde{a}^{(2)}_{33}}} \,\frac{i\,\vartheta(\xi')+{\widetilde{a}^{(2)}_{33}}|\xi'|}{\overline{\tau}+i\,|\xi'|}\,.
 \end{align}
Now, from \eqref{4.46-w}, \eqref{4.52-w}, and \eqref{4.54-w} we  get
\begin{align}
\label{4.55-w}
\dst
{\bf S}_{\bf D}^{(2)}( \xi')&\,=   \frac{i\,\vartheta(\xi')+
{\widetilde{a}^{(2)}_{33}}|\xi'|}{2\,|\xi'|}
-\frac{i\,\widetilde{\beta}}{{\widetilde{a}^{(2)}_{33}}}
\,\frac{i\,\vartheta(\xi')+{\widetilde{a}^{(2)}_{33}}|\xi'|}{\overline{\tau}+i\,|\xi'|}+
\frac{1}{2}\,\Big[ 2\,\widetilde{\beta}-{\widetilde{a}^{(2)}_{33}}-
i \,\frac{\vartheta(\xi')}{|\xi'|} \Big]
\nonumber
\\[2mm]
\dst
&\,=\frac{\widetilde{\beta}(\vartheta(\xi')+  {\widetilde{a}^{(2)}_{33}}\overline{\tau})}
{{\widetilde{a}^{(2)}_{33}} (\overline{\tau}+i\,|\xi'|)} = \frac{ \widetilde{\beta}(\overline{\tau}-  \tau)}
{2\,(\overline{\tau}+i\,|\xi'|)} =\frac{i \,\widetilde{\beta}\, \alpha_2 }
{\overline{\tau}+i\,|\xi'|}\;\;\text{for}\;\; \xi'\neq 0\,.
\end{align}
Finally, from \eqref{4.44-w} in view of \eqref{4.50-w}  and \eqref{4.55-w} we have
\begin{align*}
\dst
{\bf S}_{\bf D}( \xi')&\,=-\frac{\,\widetilde{\beta}\,(\alpha_2+|\xi'|)+
i\,\alpha_1\, \,\widetilde{\beta}}{{\widetilde{a}^{(2)}_{33}}\,
[\alpha_1^2+(\alpha_2+|\xi'|)^2]}
+\frac{i \,\widetilde{\beta}\, \alpha_2 } {\overline{\tau}+i\,|\xi'|}
\;\big[-2{\mathfrak{S}}_0\big({\mathcal V_\omega} ;\xi'\big)\big]
 \nonumber
 \\[3mm]
\dst
&\,=- \frac{\widetilde{\beta}\,(\alpha_2+|\xi'|)[1+2\,\alpha_2\,
{\widetilde{a}^{(2)}_{33}}\,{\mathfrak{S}}_0 ({\mathcal V_\omega} ;\xi')]+
 i\,\alpha_1\, \,\widetilde{\beta} \,[1-2\,\alpha_2\,
 {\widetilde{a}^{(2)}_{33}}\, {\mathfrak{S}}_0 ({\mathcal V_\omega} ;\xi')]}
{{\widetilde{a}^{(2)}_{33}}\,[\alpha_1^2+(\alpha_2+|\xi'|)^2]},
\end{align*}
whence the following inequality follows
\begin{align}
\label{4.57-w}
\dst
\mbox{\rm Re}\,\,{\bf S}_{\bf D}( \xi')
 =- \frac{\widetilde{\beta}\,(\alpha_2+|\xi'|)\,[1+2 \,\alpha_2\,{\widetilde{a}^{(2)}_{33}}
 \,{\mathfrak{S}}_0 ({\mathcal V_\omega} ;\xi')]}
{{\widetilde{a}^{(2)}_{33}}\,[\alpha_1^2+(\alpha_2 + |\xi'|)^2]}<0 \;\;\text{for}\;\; \xi'\neq 0\,
\end{align}
due to the relations (see \eqref{C-2})
\begin{eqnarray}
\label{4.58-w}
\widetilde{\beta}>0,\;\;\;{\widetilde{a}^{(2)}_{33}}>0,\;\;\; |\xi'|>0,\;\;\;\alpha_2>0, \;\;\;\;
\mathfrak{S}_0 ({\mathcal V_\omega};\xi')>0 \;\;\;\; \forall\, \xi'\neq 0.
\end{eqnarray}
Thus the \v{S}apiro-Lopatinski\u{i} type condition for the  ``boundary symbol" ${\bf S}_{\bf D}$ defined by
\eqref{4.39-w} is satisfied and  the operator $ {\bf D}$ in \eqref{4.22-w}--\eqref{4.23-w}   is Fredholm.

 \textsf{Step 3.} Here we prove that the index of the operator ${\bf D}$ equals to zero. To this end let us consider the operator
 \begin{align}
\label{4.59-w}
&
  {\bf D }_t:=\left[
\begin{array}{cc}
r_{_{\!\Omega_2}}( \beta\,  I  + { {\bf N}_\chi })\mathring{E}
&\;\;\; r_{_{\!\Omega_2}}\big[-{ V_\chi}+{  W_\chi}{\mathcal M}_\omega^{-1}  { \mathcal K_\omega}\big]
 \\[2mm]
t\; { {\bf N}_\chi^+} \mathring{E}
&\;\;\; (t-1)\beta\,I+t\,\big\{ - { \mathcal V_\chi}+ \big[(\beta - \mu)\,I +{ \mathcal W_\chi}\big]
{\mathcal M}_\omega^{-1}  { \mathcal K_\omega}\big\}
\end{array}
 \right]
\end{align}
with   $t\in [0,1]$, and  establish  that it is homotopic to the operator ${\bf D}$.

 Evidently, $ {\bf D }_1={\bf D}$ and
$ {\bf D}_t\; : H^{1}(\Omega_2)\times H^{-\frac{1}{2}}(S)\to H^{1}(\Omega_2)\times {H}^{\frac{1}{2}}(S)$.
First we show  that for the operator $ {\bf D }_t$ the \v{S}apiro-Lopatinski\u{i}
condition is satisfied for all $t\in [0,1]$.
The counterpart of the matrix \eqref{4.32-w} now reads as
\begin{eqnarray*}
{\bf R}_t( {\widetilde{y}},\xi):=
\left[
\begin{array}{cc}
{\bf R}_{11}({\widetilde{y}}, \xi) & \;\;\;\;{\bf R}_{12}({\widetilde{y}}, \xi)\\
t\;{\bf R}_{21}({\widetilde{y}}, \xi) &\;\;\;\; {\bf R} _{22,t}({\widetilde{y}}, \xi')
\end{array}
\right]\,,
\end{eqnarray*}
where ${\bf R}_{11}$, ${\bf R}_{12}$, and ${\bf R}_{21}$ are defined by formulas \eqref{4.33-w},
\eqref{4.31-w}, and \eqref{4.35-w}
respectively, while  in accordance with  \eqref{4.59-w} and \eqref{4.36-w}
\begin{align*}
    {\bf R} _{22,t}( {\widetilde{y}},\xi') :=&{\mathfrak{S}}_0\Big( (t-1)\beta\,I+t\,
    \big\{-{ \mathcal V_\chi}+ \big[(\beta - \mu)\,I +{ \mathcal W_\chi}\big]
    {\mathcal M}_\omega^{-1}  { \mathcal K_\omega}\big\};{\widetilde{y}},\xi'\Big)
=  t\, {\bf R}_{22}( {\widetilde{y}},\xi')+(t-1)\beta \,.
 \end{align*}
The corresponding boundary symbol associated with the  \v{S}apiro-Lopatinski\u{i} condition, the counterpart of \eqref{4.39-w}, has the form
\begin{align*}
{\bf S}_{{\bf D}, t}( \xi')&\,=-\Pi\,'\Big[\,\frac{t\,{\bf R}_{21}}
{{\bf R}_{11}^{^{(+)}}} \Pi^+\Big(\frac{{\bf R}_{12}}
{{\bf R}_{11}^{^{(-)}}}\Big)\Big]( \xi'  )
+{\bf R}^{(t)}_{22}( \xi' )
=-t \Pi\,'\Big[\,\frac{{\bf R}_{21}}
{{\bf R}_{11}^{^{(+)}}} \Pi^+\Big(\frac{{\bf R}_{12}}
{{\bf R}_{11}^{^{(-)}}}\Big)\Big]( \xi'  )
+t  {\bf R}_{22}( {\widetilde{y}},\xi') -(1-t) \widetilde{\beta}
 \nonumber\\
&
=t {\bf S}_{\bf D}( \xi')-(1-t) \widetilde{\beta} ,
\end{align*}
and due to the inequalities \eqref{4.57-w} and \eqref{4.58-w} we have
\[
\mbox{\rm Re}\,\,{\bf S}_{{\bf D},\,t}( \xi') = t\,\mbox{\rm Re}\,\,
{\bf S}_{\bf D} (\xi')-(1-t) \widetilde{\beta} <0 \;\;\;\;
 \forall\, \xi'\neq 0\;\;\;\;\forall \,t\in[0,\,1].
\]
Thus the  \v{S}apiro-Lopatinski\u{i}  condition for the operator $ {\bf D}_t$  is satisfied for all $t\in[0,\,1]$.
Therefore, as in the case of the operator ${\bf D}$, it follows that the operator
$
 {\bf D}_t\; : H^{1}(\Omega_2)\times H^{-\frac{1}{2}}(S)
\to
H^{1}(\Omega_2)\times {H}^{\frac{1}{2}}(S)
$
is Fredholm and has the same index  for all $t\in [0,1]$.

On the other hand, the upper triangular matrix operator $ {\bf D}_0$ has zero index since one of the operators in the main diagonal,
$
- \beta\, I  :
 H^{-\frac{1}{2}}(S)\to  H^{-\frac{1}{2}}(S)
$
is invertible, while the second operator,
 $
{ {\bf D}_{11}}=r_{_{\!\Omega_2}}( \beta\,  I  + {  {\bf N}_\chi })\mathring{E}    : H^{1}(\Omega_2)\to H^{1}(\Omega_2)
$
is Fredholm with zero index as it has been shown in Step 1.
Consequently,
$
{\rm{Ind}}\, {\bf D}={\rm{Ind}}\, {\bf D}_{\,1}={\rm{Ind}}\, {\bf D}_t={\rm{Ind}}\, {\bf D}_0=0\,.
$

 \textsf{Step 4.} Now we show that the operator ${\bf D}$ is injective which will imply its invertibility.

Let  $\widetilde{U}=(\widetilde{u}_2,\widetilde{\psi}_2)^\top\in H^{1}(\Omega_2)\times H^{-\frac{1}{2}}(S)$ be
a solution to the homogeneous equation
\begin{eqnarray}
\label{4.64-w}
 {\bf D}\,\widetilde{U}=0.
\end{eqnarray}
Since the operator ${\bf D}$ is Fredholm with zero index, there exists a left
regularizer $\mathfrak{R}_{\bf D}$ such that
\[
\mathfrak{R}_{\bf D} \;: H^{ 1}(\Omega_2)\times H^{ \frac{1}{2}}(S)
\to H^{ 1}(\Omega_2)\times {H}^{ -\frac{1}{2}}(S)
\]
and
$
\mathfrak{R}_{\bf D}\,  {\bf D}=I+\mathfrak{T}_{\bf D}\, ,
$
where    $\mathfrak{T}_{\bf D}$  is the operator of order  $-1$   (cf., e.g., the proof of
 Theorems 22.1 and 23.1 in Eskin\cite{Esk}),
\begin{eqnarray}
\label{4.67-w}
\mathfrak{T}_{\bf D}  \;: H^{ 1}(\Omega_2)\times H^{ -\frac{1}{2}}(S)
\to
H^{ 2}(\Omega_2)\times {H}^{ \frac{1}{2}}(S).
\end{eqnarray}
Therefore, for $\widetilde{U}=(\widetilde{u}_2,\widetilde{\psi}_2)^\top\in H^{1}(\Omega_2)\times H^{-\frac{1}{2}}(S)$ from  \eqref{4.64-w} we have
\begin{eqnarray}
\label{4.68-w}
 \mathfrak{R}_{\bf D}\,  {\bf D}\,\widetilde{U}=\widetilde{U}+\mathfrak{T}_{\bf D} \widetilde{U}=0
\end{eqnarray}
and, in view of   \eqref{4.67-w} and \eqref{4.68-w}, we deduce
$$
\widetilde{U}=(\widetilde{u}_2,\widetilde{\psi}_2)^\top\in H^{2}(\Omega_2)\times H^{\frac{1}{2}}(S).
$$
Clearly, by   $\widetilde{u}_2$ and $\widetilde{\psi}_2$ we can construct the vector
$U^{(0)}=(\widetilde{u}_2, \widetilde{\psi}_2, \widetilde{\varphi}_2,  \widetilde{\psi}_1, \widetilde{\varphi}_1, \widetilde{u}_1)\in \mathbf{H}$, a solution to the homogeneous system
\eqref{4.1-w}--\eqref{4.6-w}. Here $\mathbf{H}$ is defined in \eqref{2.78-w}.

Therefore by   equivalence Theorem \ref{T2.3} and    uniqueness Theorem \ref{T2.4} we conclude that
$U^{(0)}$ is  a zero vector. Thus the null space of the operator ${\bf D} $ is trivial
 in  the   class    $H^{1}(\Omega_2)\times H^{-\frac{1}{2}}(S)$.  Consequently,
the operator
$
 {\bf D}\;  : H^{1}(\Omega_2)\times H^{-\frac{1}{2}}(S)
\to
H^{1}(\Omega_2)\times {H}^{\frac{1}{2}}(S)
$
is invertible implying that the operator \eqref{4.17-w} is invertible as well which completes the proof.
\end{proof}

For a cut off function $\chi$ of infinite smoothness we have the following result.
\begin{corollary}
\label{cor-C4.2}
Let a cut-off function $\chi\in X_+^\infty$ .   Then the  operators
\begin{align}
&
\label{4.69-w}
 {\bf D}\;: \; H^{r+1}(\Omega_2)\times H^{r-\frac{1}{2}}(S) \to H^{r+1}(\Omega_2)\times {H}^{r+\frac{1}{2}}(S),\\
 &
 \label{4.70-w}
 {\bf M}\;: \; H^{r+ 1 }(\Omega_2)\times H^{r -\frac{1}{2}}(S)\times H^{r+ \frac{1}{2}}(S) \to
               H^{ r+1 }(\Omega_2)\times H^{ r+\frac{1}{2}}(S)\times H^{ r-\frac{1}{2}}(S),
\end{align}
where the ${\bf D}$ and  ${\bf M}$ are defined by \eqref{4.23-w} and \eqref{4.12-w}, respectively,
are invertible for all $r>-\ha$.
\end{corollary}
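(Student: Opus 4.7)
The plan is to bootstrap from Theorem~\ref{T4.1} by rerunning the Vishik--Eskin analysis in the higher-order scale and then transferring invertibility from $\mathbf{D}$ to $\mathbf{M}$ through the factorization $\mathbf{M}=\mathbf{B}\mathbf{C}$ established in~\eqref{4.18-w}. Since $\chi\in X_+^\infty$, the localized potentials $V_\chi$, $W_\chi$, $\mathcal{V}_\chi$, $\mathcal{W}_\chi$, the pseudodifferential operators $\mathbf{N}_\chi$, $\mathbf{N}_\chi^+$, and their compositions with the classical zero-order PsDO $\mathcal{M}_\omega^{-1}\mathcal{K}_\omega$ on $S$ satisfy standard mapping properties on the full Sobolev scale (see Theorems~\ref{tB.4}, \ref{tB.6} and the results of Eskin\cite{Esk}, Hsiao \& Wendland\cite{HW}, Boutet de Monvel\cite{BdM} on PsDOs with rational, transmission-type principal symbols). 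Continuity of \eqref{4.69-w} follows entry-by-entry.

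Next, I would observe that the Fredholm analysis of Theorem~\ref{T4.1} is \emph{Sobolev-scale independent}: the principal homogeneous symbols $\mathbf{R}_{11},\mathbf{R}_{12},\mathbf{R}_{21},\mathbf{R}_{22}$ computed in \eqref{4.33-w}--\eqref{4.36-w}, as well as the \v{S}apiro--Lopatinski\u{\i} boundary symbol $\mathbf{S}_{\mathbf{D}}(\xi')$ computed explicitly in \eqref{4.44-w}--\eqref{4.57-w}, depend only on the point $\widetilde{y}\in S$ and $\xi'$, not on $r$. Since $\mathrm{Re}\,\mathbf{S}_{\mathbf{D}}(\xi')<0$ for $\xi'\neq 0$, the operator \eqref{4.69-w} is elliptic in the Boutet de Monvel sense and hence Fredholm for every $r>-\frac{1}{2}$. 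The index equals zero by the same homotopy $\mathbf{D}_t$ of~\eqref{4.59-w}, which remains \v{S}apiro--Lopatinski\u{\i} elliptic for all $t\in[0,1]$ at any $r>-\frac{1}{2}$, and whose endpoint $\mathbf{D}_0$ is upper triangular with one invertible diagonal entry and the other of index zero by Step~1 of Theorem~\ref{T4.1}.

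To conclude invertibility of \eqref{4.69-w}, I would prove triviality of the kernel by a regularity bootstrap. If $\widetilde{U}\in H^{r+1}(\Omega_2)\times H^{r-\frac{1}{2}}(S)$ satisfies $\mathbf{D}\widetilde{U}=0$, then applying a left parametrix $\mathfrak{R}_{\mathbf{D}}$ as in Step~4 of Theorem~\ref{T4.1} gives $\widetilde{U}=-\mathfrak{T}_{\mathbf{D}}\widetilde{U}$ with $\mathfrak{T}_{\mathbf{D}}$ of order $-1$; iterating yields $\widetilde{U}\in H^1(\Omega_2)\times H^{-\frac{1}{2}}(S)$, where Theorem~\ref{T4.1} forces $\widetilde{U}=0$. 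Combined with the vanishing index, this gives invertibility of~\eqref{4.69-w}. Finally, \eqref{4.70-w} follows from the factorization $\mathbf{M}=\mathbf{B}\mathbf{C}$: the matrix $\mathbf{C}$ is triangular with identity diagonal and is invertible on the full scale, while $\mathcal{M}_\omega:H^{r+\frac{1}{2}}(S)\to H^{r-\frac{1}{2}}(S)$ is invertible by the scale-independent extension of Lemma~\ref{L.C2}, so the block-triangular structure of $\mathbf{B}$ reduces the invertibility of $\mathbf{M}$ to that of $\mathbf{D}$.

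The main technical obstacle I expect is the careful verification that each constituent operator — in particular the singular PsDO $\mathbf{N}_\chi^+$ at the boundary and the compositions involving $\mathcal{M}_\omega^{-1}\mathcal{K}_\omega$ — retains the correct mapping properties, symbol class, and transmission property in the higher-regularity scale for infinitely smooth $\chi\in X_+^\infty$; once these are in place (which is standard for $\chi\in X_+^\infty$ but requires patient book-keeping at the boundary), every scale-independent symbolic computation from Theorem~\ref{T4.1} carries over mechanically.
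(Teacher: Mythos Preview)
Your proposal is correct and follows essentially the same route as the paper: rerunning the scale-independent symbolic calculus of Theorem~\ref{T4.1} with the improved mapping properties available for $\chi\in X_+^\infty$, and handling injectivity via a parametrix-based regularity bootstrap. The paper phrases the latter step as the existence of a common regularizer for all $r>-\tfrac{1}{2}$ (citing Agranovich\cite{Ag1}), which forces the kernel to be $r$-independent and hence trivial by the base case; your explicit iteration $\widetilde{U}=-\mathfrak{T}_{\mathbf{D}}\widetilde{U}$ is precisely the content of that statement.
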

 \begin{proof} It can be carried out by the word for word arguments applied in the proof of Theorem \ref{T4.1}
and using the counterparts of Theorems  \ref{tB.4} and \ref{tB.6} describing
the mapping and smoothness properties of the localized potentials for a cut off function  of infinite smoothness which
actually coincide with the properties of usual potentials without localization.
\\
\indent
In the final part, Step 4, one needs to apply the fact that the operator \eqref{4.69-w}
possesses a common regularizer for all $r> -\ha$ (see, e.g., Agranovich\cite{Ag1}) implying that the null space of the operator
$ {\bf D}$ is trivial for  all   $r>-\ha$
which yields that the  operators \eqref{4.69-w} and
\eqref{4.70-w} are invertible   for   all   $r> -\ha$.
\end{proof}

 From  Theorem \ref{T4.1} and Lemma \ref{L2.3}, we derive also the  invertibility result for operator \eqref{4.17-ww-1}.
\begin{corollary}
\label{cor-C4.3}
Let a cut-off function $\chi\in X_+^4$.
Then the  operator $\mathbf M :\mathbb H  \to  \mathbb F $
is invertible.
\end{corollary}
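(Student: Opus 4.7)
The plan is to bootstrap the invertibility of $\mathbf M$ on the $\mathbb X \to \mathbb Y$ scale (Theorem~\ref{T4.1}) up to the $\mathbb H \to \mathbb F$ scale by invoking Lemma~\ref{L2.3}. Continuity of $\mathbf M:\mathbb H\to\mathbb F$ has already been noted in \eqref{4.17-ww-1}, so I only need to establish bijectivity; boundedness of the inverse then follows by the Banach open mapping theorem.

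\textbf{Injectivity.} Since $X_+^4\subset X^3$ and the natural embeddings $\mathbb H \hookrightarrow \mathbb X$ and $\mathbb F \hookrightarrow \mathbb Y$ hold (the spaces $H^{1,0}(\Omega_2;A_2)$ and $H^{1,0}(\Omega_2;\Delta)$ are subspaces of $H^1(\Omega_2)$, while the boundary components coincide), the action of $\mathbf M$ on $\mathbb H$ is simply the restriction of its action on $\mathbb X$. Therefore the null space of $\mathbf M:\mathbb H\to\mathbb F$ is contained in the null space of $\mathbf M:\mathbb X\to\mathbb Y$, which is trivial by Theorem~\ref{T4.1}.

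\textbf{Surjectivity.} Given $F=(F_1,F_2,F_3)\in\mathbb F$, the inclusion $\mathbb F\subset\mathbb Y$ and Theorem~\ref{T4.1} supply a unique solution $U=(u_2,\psi_2,\varphi_2)\in\mathbb X$ of $\mathbf M U = F$. It only remains to show that this solution actually belongs to $\mathbb H$, that is, that $u_2\in H^{1,0}(\Omega_2;A_2)$. The first row of the system reads
\begin{equation*}
\beta\,u_2 + \mathcal N_\chi u_2 - V_\chi\psi_2 + W_\chi\varphi_2 = F_1\quad\text{in }\Omega_2,
\end{equation*}
which is precisely the hypothesis of Lemma~\ref{L2.3} with $\Phi:=F_1\in H^{1,0}(\Omega_2;\Delta)$, $\psi:=\psi_2\in H^{-\frac12}(S)$, $\varphi:=\varphi_2\in H^{\frac12}(S)$, and $\chi\in X_+^4\subset X^3$. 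Lemma~\ref{L2.3} then yields $u_2\in H^{1,0}(\Omega_2;A_2)$ together with the corresponding norm estimate, so $U\in\mathbb H$.

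Combining these two points with the continuity \eqref{4.17-ww-1}, the operator $\mathbf M:\mathbb H\to\mathbb F$ is a continuous bijection between Banach spaces, hence invertible. No step is really an obstacle here: the heavy lifting (the \v{S}apiro--Lopatinski\u{\i} analysis and the index-zero homotopy) was already carried out in Theorem~\ref{T4.1}, and Lemma~\ref{L2.3} is exactly tailored to pass from the $H^1$-regularity of the $\mathbb X$-solution to the $A_2$-regularity demanded by $\mathbb H$; the only point that requires a brief check is that the first row of $\mathbf M U = F$ fits the hypotheses of Lemma~\ref{L2.3}, which is immediate.
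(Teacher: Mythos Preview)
Your proof is correct and follows exactly the route indicated by the paper, which simply says the corollary is derived ``from Theorem~\ref{T4.1} and Lemma~\ref{L2.3}''. You have supplied precisely the details the paper leaves implicit: injectivity by restriction from $\mathbb X$, surjectivity by solving in $\mathbb X$ and then upgrading $u_2$ to $H^{1,0}(\Omega_2;A_2)$ via Lemma~\ref{L2.3} applied to the first row, and boundedness of the inverse by the open mapping theorem.
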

Summarizing the above obtained results we can make the following conclusions.\\
 Consider LBDIE system \eqref{2.71-w}-\eqref{2.76-w} with arbitrary right-hand sides,
\begin{align}
\label{4.71-w}
&
\beta \,u_2 + { {\cal N}_\chi }\,u_2 -{  V_\chi}\,\psi_2 +{  W_\chi}\,\varphi_2 =h_1 \;\;\;\text{in}\;\;\; \Omega_2,\\
\label{4.72-w}
&
  {\cal N}_\chi^+ \,u_2 -{ \mathcal V_\chi}\psi_2 +[(\beta - \mu)\,I +{ \mathcal W_\chi}]\varphi_2 =h_2 \; \;\;\text{on}\;\; \;  S,\\
&
\label{4.73-w}
{ \mathcal K_\omega}  \psi_2 - { \mathcal M_\omega} \varphi_2 =h_3  \;\;\;\text{on}\;\;\;S,\\
&
\label{4.74-w}
\psi_2-\psi_1=h_4 \;\;\;\text{on}\;\;\;S,\\
&
\label{4.75-w}
\varphi_2-\varphi_1=h_5 \;\;\;\text{on}\;\;\;S,\\
&
\label{4.76-w}
u_1+ {  V_\omega}\,\psi_1-{  W_\omega}\,\varphi_1 = h_6 \;\;\;\; \text{in}\;\;\; \Omega_1.
\end{align}
Theorem \ref{T4.1} and Corollaries  \ref{cor-C4.2} and \ref{cor-C4.3} imply the following assertion.
\begin{corollary}\label{C4.4}

(i) LBDIE system \eqref{4.71-w}-\eqref{4.76-w} with arbitrary right hand side data
\begin{align}
\label{4.79-w}
(h_1,\cdots,h_6) \in \mathbf{Y}:=H^{1 }(\Omega_2)\times H^{\frac{1}{2}}(S)\times H^{-\frac{1}{2}}(S)\times H^{-\frac{1}{2}}(S)\times H^{\frac{1}{2}}(S)
\times   H^{1}_{comp}(\Omega_1)
\end{align}
is uniquely solvable in the space
\begin{align}
\label{4.78-w}
\mathbf{X}:=H^{1 }(\Omega_2)\times H^{-\frac{1}{2}}(S)\times H^{\frac{1}{2}}(S)\times H^{-\frac{1}{2}}(S)
\times
H^{\frac{1}{2}}(S)
\times \big( H^{1}_{loc}(\Omega_1) \cap Z(\Omega_1)\big).
\end{align}

(ii) LBDIE system \eqref{4.71-w}-\eqref{4.76-w} with arbitrary right hand side data
\[
(h_1,\cdots,h_6) \in \mathbf{F}:=H^{1, \,0}({\Omega_2};\Delta)\!\times \!H^{\frac{1}{2}}(S)\!\times \!
H^{-\frac{1}{2}}(S)\!\times\! H^{-\frac{1}{2}}(S)\!\times\! H^{\frac{1}{2}}(S)
\times   H^{1,\,0}_{comp}(\Omega_1; A_1)
\]
is uniquely solvable in the space $\mathbf{H}$ defined in \eqref{2.78-w},
\begin{align*}
\mathbf{H}=H^{1,0}({\Omega_2}; A_2)\times H^{-\frac{1}{2}}(S)\times H^{\frac{1}{2}}(S)\times H^{-\frac{1}{2}}(S)
\times H^{\frac{1}{2}}(S) \times \big( H^{1,0}_{loc}({\Omega_1}; A_1) \cap Z({\Omega_1})\big).
\end{align*}
In particular, under conditions \eqref{2.20-w1}, system \eqref{2.71-w}-\eqref{2.76-w} is uniquely solvable in the space
$\mathbf{H}$.

In both cases, (i) and (ii), the solution continuously depends on the right hand side data provided $ {\rm supp}\, h_6 \subset \overline{\Omega}_0$, where
$\overline{\Omega}_0$ is a fixed compact subset of $\Omega_1$.
\end{corollary}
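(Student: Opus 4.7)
The plan is to exploit the block-triangular structure of the LBDIE system \eqref{4.71-w}--\eqref{4.76-w}: the first three equations form the closed subsystem $\mathbf{M}(u_2,\psi_2,\varphi_2)^\top=(h_1,h_2,h_3)^\top$, while \eqref{4.74-w}--\eqref{4.76-w} express $(\psi_1,\varphi_1,u_1)$ explicitly in terms of the first triple and of $(h_4,h_5,h_6)$. For part (i) I would apply Theorem \ref{T4.1} to obtain a unique $(u_2,\psi_2,\varphi_2)\in\mathbb{X}$ from $(h_1,h_2,h_3)\in\mathbb{Y}$, and for part (ii) I would instead apply Corollary \ref{cor-C4.3} to obtain a unique $(u_2,\psi_2,\varphi_2)\in\mathbb{H}$ from $(h_1,h_2,h_3)\in\mathbb{F}$. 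In either case, \eqref{4.74-w}--\eqref{4.75-w} yield $\psi_1=\psi_2-h_4\in H^{-\frac{1}{2}}(S)$ and $\varphi_1=\varphi_2-h_5\in H^{\frac{1}{2}}(S)$, and \eqref{4.76-w} defines $u_1:=h_6-V_\omega\psi_1+W_\omega\varphi_1$ in $\Omega_1$.

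The substance of the proof then lies in verifying that this $u_1$ belongs to the target space. The inclusion $u_1\in H^1_{loc}(\Omega_1)$ is inherited from the mapping properties of the radiating layer potentials $V_\omega$, $W_\omega$ collected in Appendix \ref{Appendix C} together with the assumption $h_6\in H^1_{comp}(\Omega_1)$. In case (ii), the identity $A_1u_1=A_1h_6$ in $\Omega_1$, which follows from \eqref{2.57-ww-1}, upgrades this to $u_1\in H^{1,0}_{loc}(\Omega_1;A_1)$ since $h_6\in H^{1,0}_{comp}(\Omega_1;A_1)$. Because $V_\omega\psi_1$ and $W_\omega\varphi_1$ are built from the radiating fundamental solution $\Gamma(\cdot,\omega)$ in \eqref{2.40-w} and thus belong to $Z(\Omega_1)$, while $h_6$ has compact support in $\Omega_1$, the function $u_1$ coincides with $-V_\omega\psi_1+W_\omega\varphi_1$ outside any ball containing ${\rm supp}\,h_6$ and therefore satisfies the Sommerfeld conditions \eqref{2.12-w}; hence $u_1\in Z(\Omega_1)$, as required by Definition 2.1.

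Uniqueness is immediate: vanishing right-hand sides force $(u_2,\psi_2,\varphi_2)=0$ by injectivity of $\mathbf{M}$, and the explicit formulas then force $\psi_1=\varphi_1=u_1=0$. The particular claim that system \eqref{2.71-w}--\eqref{2.76-w} is uniquely solvable in $\mathbf{H}$ under \eqref{2.20-w1} is the specialisation of (ii) to the right-hand sides ${\cal P}_\chi f_2$, ${\cal P}_\chi^+ f_2$, $\Psi_\omega f_1+\mathcal{K}_\omega\psi_0-\mathcal{M}_\omega\varphi_0$, $\psi_0$, $\varphi_0$, ${\cal P}_\omega f_1$, whose membership in $\mathbf{F}$ is read off from the mapping properties of the localized and radiating potentials listed in Appendices \ref{Appendix B} and \ref{Appendix C}. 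Continuous dependence on the data under the hypothesis ${\rm supp}\,h_6\subset\overline{\Omega}_0$ then follows from the Banach open mapping theorem applied to the continuous bijection between the data space and the solution space, once both are endowed with Banach norms adapted to the fixed compact $\overline{\Omega}_0$. The only step requiring genuine care is the simultaneous verification of the radiation condition and, in case (ii), of the $A_1$-regularity of the constructed $u_1$; however, both reduce at once to cited properties of $V_\omega$, $W_\omega$ and to the hypothesis on $h_6$, so no new estimates are needed beyond those already established.
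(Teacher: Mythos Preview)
Your proof is correct and takes essentially the same approach as the paper, which simply states that the corollary follows from Theorem~\ref{T4.1} and Corollaries~\ref{cor-C4.2} and~\ref{cor-C4.3}; you have merely unpacked the block-triangular structure the paper left implicit and supplied the routine verifications for $u_1$ and for continuous dependence.
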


Finally, Corollary \ref{C4.4}(ii), equivalence Theorem \ref{T2.3}, and uniqueness Theorem \ref{T2.4}
lead to the following assertion.
\begin{theorem}
\label{tDDD}
Let conditions \eqref{2.20-w1} hold.
Transmission problem \eqref{2.16-w}--\eqref{2.19-w} is uniquely solvable and the solution continuously depends on the right hand side data provided ${\rm supp}\, f_1 \subset \overline{\Omega}_0$, where
$\overline{\Omega}_0$ is a fixed compact subset of $\Omega_1$.
\end{theorem}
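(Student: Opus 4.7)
The plan is to deduce Theorem~\ref{tDDD} as a direct corollary of three earlier results: the equivalence Theorem~\ref{T2.3}, the uniqueness Theorem~\ref{T2.4}, and the unique solvability Corollary~\ref{C4.4}(ii) for the LBDIE system with general right-hand sides. Since all the analytical work (Fredholm theory, \v{S}apiro-Lopatinski\u\i\ condition, injectivity via uniqueness) has already been carried out in the construction of the LBDIO~$\mathbf{M}$, the present theorem is essentially an assembly statement.

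First I would set up the canonical map from transmission data to LBDIE data: given $(f_1,f_2,\varphi_0,\psi_0)$ satisfying \eqref{2.20-w1}, define
\begin{equation*}
h_1 := \mathcal{P}_\chi f_2,\quad h_2 := \mathcal{P}_\chi^+ f_2,\quad h_3 := \Psi_\omega f_1 + \mathcal{K}_\omega\psi_0 - \mathcal{M}_\omega\varphi_0,\quad h_4 := \psi_0,\quad h_5 := \varphi_0,\quad h_6 := \mathcal{P}_\omega f_1.
\end{equation*}
These are precisely the right-hand sides appearing in \eqref{4.1-w}--\eqref{4.6-w}. Using the mapping properties of the localized Newtonian potential $\mathcal{P}_\chi$ (Theorem~\ref{tB.4}), the boundary operators $\mathcal{K}_\omega,\mathcal{M}_\omega$ and the operator $\Psi_\omega$ from Appendix~\ref{Appendix C}, and the radiating volume potential $\mathcal{P}_\omega:H^0_{comp}(\Omega_1)\to H^{1,0}_{loc}(\mathbb{R}^3;A_1)$ whose support is controlled by $\mathrm{supp}\,f_1\subset\overline{\Omega}_0$, one checks that the assignment $(f_1,f_2,\varphi_0,\psi_0)\mapsto(h_1,\dots,h_6)$ is continuous into the space $\mathbf{F}$ of Corollary~\ref{C4.4}(ii).

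Next I would invoke Corollary~\ref{C4.4}(ii) to obtain a unique solution $(u_2,\psi_2,\varphi_2,\psi_1,\varphi_1,u_1)\in\mathbf{H}$ of the LBDIE system \eqref{4.71-w}--\eqref{4.76-w} depending continuously on $(h_1,\dots,h_6)\in\mathbf{F}$. Composing with the continuous data-to-data map of the previous paragraph yields continuous dependence of this vector on the original transmission data. The equivalence Theorem~\ref{T2.3}(ii) then asserts that the pair $(u_2,u_1)\in H^{1,0}(\Omega_2;A_2)\times(H^{1,0}_{loc}(\Omega_1;A_1)\cap Z(\Omega_1))$ solves the transmission problem \eqref{2.16-w}--\eqref{2.19-w}, and that $\psi_q,\varphi_q$ are the traces prescribed in \eqref{2.64-w}. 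Continuous dependence of $(u_2,u_1)$ on the data is a coordinate projection of the continuous dependence of the full vector.

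Finally, uniqueness is furnished by Theorem~\ref{T2.4}; alternatively, any two transmission-problem solutions would yield, via Theorem~\ref{T2.3}(i), two solutions of the LBDIE system with the same right-hand side, hence coincide by Corollary~\ref{C4.4}(ii). There is no real obstacle here, since the hard analysis, namely the invertibility of the operator $\mathbf{D}$ in \eqref{4.23-w} through Vishik--Eskin factorization and the \v{S}apiro-Lopatinski\u\i\ inequality \eqref{4.57-w}, has already been completed; the only point requiring minor care is the verification that $h_6=\mathcal{P}_\omega f_1$ has support in a fixed compact subset of $\Omega_1$ whenever $\mathrm{supp}\,f_1\subset\overline{\Omega}_0$, which is immediate from the explicit form of the radiating fundamental solution $\Gamma(\cdot,\omega)$ only up to the trivial enlargement of $\overline{\Omega}_0$ by $\overline{\Omega}_2$; this is harmless because the space $H^{1,0}_{loc}(\Omega_1;A_1)\cap Z(\Omega_1)$ in which $u_1$ is sought is of local type.
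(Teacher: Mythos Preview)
Your overall strategy matches the paper's exactly: Theorem~\ref{tDDD} is obtained by combining Corollary~\ref{C4.4}(ii), the equivalence Theorem~\ref{T2.3}, and the uniqueness Theorem~\ref{T2.4}, with no further analysis needed.

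However, your final paragraph contains a genuine error. The function $h_6=\mathcal{P}_\omega f_1$ is \emph{not} compactly supported in $\Omega_1$. The radiating fundamental solution $\Gamma(\cdot,\omega)$ in \eqref{2.40-w} decays only like $|x|^{-1}$ and never vanishes identically outside a ball, so the convolution $\mathcal{P}_\omega f_1$ is a nontrivial radiating function at infinity; by Lemma~\ref{L.C1}(i) one has $\mathcal{P}_\omega f_1\in H^2_{loc}(\mathbb{R}^3)\cap Z(\mathbb{R}^3)$, but membership in $Z$ describes Sommerfeld-type asymptotics, not compact support. Consequently the data vector $(h_1,\dots,h_6)$ does \emph{not} lie in the space $\mathbf{F}$ of Corollary~\ref{C4.4}(ii), and your claimed verification ``immediate from the explicit form of $\Gamma$'' is false. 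You appear to have momentarily confused the radiating kernel $\Gamma$ with the localized parametrix $P_\chi$, which does have compact support.

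The repair is straightforward: invoke directly the second clause of Corollary~\ref{C4.4}(ii) (``In particular, under conditions \eqref{2.20-w1}, system \eqref{2.71-w}--\eqref{2.76-w} is uniquely solvable in $\mathbf{H}$''), which is precisely the statement needed here and does not require $h_6$ to be compactly supported. That clause holds because equation \eqref{4.76-w} is not coupled back into the first five and simply defines $u_1:=\mathcal{P}_\omega f_1 - V_\omega\psi_1 + W_\omega\varphi_1$, which lies in $H^{1,0}_{loc}(\Omega_1;A_1)\cap Z(\Omega_1)$ by Lemma~\ref{L.C1}(i). Continuous dependence on $f_1$ then comes from the continuity of $f_1\mapsto\mathcal{P}_\omega f_1$ as a map from $H^0(\Omega_0)$ (with $\overline{\Omega}_0$ fixed) into $H^2_{loc}(\mathbb{R}^3)$, together with the continuity of $f_1\mapsto\Psi_\omega f_1$ into $H^{-\frac{1}{2}}(S)$.
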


\section{Coupling of variational and non-local BIE approach}
\label{S3}

Here we present an alternative approach for investigation of transmission problem \eqref{2.16-w}-\eqref{2.20-w1}.
We apply the non-local approach and reformulate the transmission problem
in variational form. To this end, we  recall
the first Green identity \eqref{2.14-wG} in $\Omega_2$,
\begin{align}
\label{2.90-w}
\int\limits_{{\Omega_2} }  [a^{(2)}_{kj} \,\pa_{j}u_2 \, \overline{\pa_{k} {v}} -
  \omega^2\kappa_2  u_2  \overline{v }\,]dx-\langle { T^+_2}u_2 \,,\,\overline{\gamma^+v}\rangle_{S }
  =
- \int\limits_{{\Omega_2} } A_2 u_2 \  \overline{v }\,dx,\;\;
   \forall\ u_2\in H^{1,\,0}(\Omega_2; A_2),\ v\in H^{1}(\Omega_2).
\end{align}
Assuming that a pair  $ (u_2, u_1)\in H^{1,\,0}(\Omega_2; A_2)\times
\big( H^{1,0}_{loc}({\Omega_1}; A_1) \cap Z({\Omega_1})\big) $ solves transmission
problem \eqref{2.16-w}-\eqref{2.20-w1} and implementing the Steklov-Poincar\'{e}
type relation  \eqref{2.63-w}, we reduce \eqref{2.90-w} to equation
\begin{align}
&
\label{2.91-w}
\mathfrak{B}(u_2,v)=\mathfrak{F}(v)\quad
\forall\  v\in H^{1}(\Omega_2),
\end{align}
where  $\mathfrak{B}$ is a sesquilinear form  and $\mathfrak{F}$ is
an antilinear functional defined, respectively, as
\begin{align}
&
\label{2.92-w}
\mathfrak{B}(u_2,v):=\int\limits_{{\Omega_2} }  [a ^{(2)} _{kj}(x)\,\pa_{j}u_2 (x)\;
 \overline{\pa_{k} {v}(x)}-   \omega^2\kappa_2(x) u_2 (x)\, \overline{v(x)}\,]dx-
 \langle { \mathcal K_\omega^{-1} \,\mathcal M_\omega}(\gamma^+u_2) \,,\,\overline{\gamma^+v}\rangle_{S },\\
 &
\label{2.93-w}
\mathfrak{F}(v):=- \int\limits_{{\Omega_2}} f_2(x)\,\overline{v(x)}\,dx+\langle \Phi_\omega \,,\,\overline{\gamma^+v}\rangle_{S},
\end{align}
with
$
{ \Phi_\omega}:={\mathcal K_\omega} ^{-1} \,[{ \Psi_\omega}f_1 -{ \mathcal M_\omega}\varphi_0]+\psi_0\in H^{-\frac{1}{2}}(S).
$
Here the operators  ${\mathcal K_\omega}$, ${\mathcal M_\omega}$ and  ${\Psi_\omega}$
are defined by  relations \eqref{2.61-w}--\eqref{2.60-w}.

We associate with equation \eqref{2.91-w} the following variational problem (in a wider
space):\\
$\bullet$ \textsl{
 Find a function $u_2\in H^1({\Omega_2})$ satisfying \eqref{2.91-w}.}

Let us first prove the following equivalence theorem.
\begin{theorem}
\label{T3.1}
Let conditions \eqref{2.20-w1} be fulfilled.

(i) If a pair $(u_2,u_1)\in   H^{1,\,0}({\Omega_2};A_2)\times \big( H^{1,\,0}_{loc}({\Omega_1};A_1)\cap Z({\Omega_1})\big)$
solves transmission problem \eqref{2.16-w}--\eqref{2.20-w1},
then the function $u_2$  solves  variational equation  \eqref{2.91-w}.

(ii) Vice versa, if a function $u_2 \in H^1({\Omega_2})$ solves   variational equation  \eqref{2.91-w}, then the pair $(u_2,u_1)$,
where
\begin{eqnarray}
\label{2.96-w}
u_1(y)={ {\cal P}_\omega }  f_1 (y)- {  V_\omega}(T^+_2u_2-\psi_0)(y)+{  W_\omega}(\gamma^+u_2-\varphi_0)(y),
\;\;\;\; y\in {\Omega_1},
\end{eqnarray}
belongs to the class $H^{1,\,0}({\Omega_2};A_2)\times \big( H^{1,\,0}_{loc}({\Omega_1};A_1)\cap Z({\Omega_1})\big)$ and
solves  transmission problem \eqref{2.16-w}--\eqref{2.20-w1}.
\end{theorem}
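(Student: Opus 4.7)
The plan is to handle Part (i) as a direct forward computation starting from Green's first identity in $\Omega_2$, and Part (ii) as the substantive reverse direction, where both the PDE $A_2 u_2 = f_2$ and the transmission conditions must be recovered from the variational equation together with the representation formula \eqref{2.96-w}.

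For Part (i), I would begin with Green's first identity \eqref{2.90-w} applied to $u_2\in H^{1,0}(\Omega_2;A_2)$, substitute $A_2 u_2=f_2$ on the right-hand side, and use the transmission conditions \eqref{2.18-w}--\eqref{2.19-w} to replace $\gamma^- u_1=\gamma^+u_2-\varphi_0$ and $T_1^- u_1=T_2^+u_2-\psi_0$ inside the Steklov-Poincar\'e relation \eqref{2.63-w}. Solving for $T_2^+u_2$ gives $T_2^+u_2=\mathcal{K}_\omega^{-1}\mathcal{M}_\omega(\gamma^+u_2)+\Phi_\omega$, and substituting this into \eqref{2.90-w} separates the bilinear part of the boundary term (which becomes the boundary contribution to $\mathfrak{B}$) from the data contributions (which become the boundary term of $\mathfrak{F}$), recovering \eqref{2.91-w}--\eqref{2.93-w} exactly.

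Part (ii) I would carry out in two stages. First, I would recover the PDE and a boundary identity from \eqref{2.91-w}. Taking test functions $v\in\mathcal{D}(\Omega_2)$ kills every boundary term in \eqref{2.92-w}--\eqref{2.93-w}, and the remaining identity yields $A_2 u_2=f_2$ distributionally in $\Omega_2$, so $u_2\in H^{1,0}(\Omega_2;A_2)$ and the canonical co-normal derivative $T_2^+u_2\in H^{-\frac{1}{2}}(S)$ is well-defined. Plugging Green's first identity \eqref{2.14-wG} back into \eqref{2.91-w} and using the surjectivity of $\gamma^+\colon H^1(\Omega_2)\to H^{\frac{1}{2}}(S)$ together with the duality pairing, I would deduce $T_2^+u_2-\mathcal{K}_\omega^{-1}\mathcal{M}_\omega(\gamma^+u_2)=\Phi_\omega$ on $S$, which after applying $\mathcal{K}_\omega$ and rearranging becomes
\begin{equation*}
\mathcal{K}_\omega(T_2^+u_2-\psi_0)-\mathcal{M}_\omega(\gamma^+u_2-\varphi_0)=\Psi_\omega f_1 \quad\text{on }S.
\end{equation*}

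In the second stage, I would define $u_1$ by \eqref{2.96-w} and verify the transmission conditions. Membership $u_1\in H^{1,0}_{loc}(\Omega_1;A_1)\cap Z(\Omega_1)$ with $A_1 u_1=f_1$ in $\Omega_1$ follows at once from \eqref{2.57-ww-1}--\eqref{2.57-ww-2} and the radiation properties of $\Gamma(\cdot,\omega)$ collected in Appendix \ref{Appendix C}. To obtain the transmission conditions I would extend the same representation formula across $S$ and set $w:=\mathcal{P}_\omega f_1-V_\omega(T_2^+u_2-\psi_0)+W_\omega(\gamma^+u_2-\varphi_0)$ restricted to $\Omega_2$. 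Since $\mathrm{supp}\,f_1\subset\Omega_1$, we have $A_1 w=0$ in $\Omega_2$ by \eqref{2.57-ww-1}--\eqref{2.57-ww-2}; and by the combined-field identities \eqref{2.60-w}--\eqref{2.62-w} together with the boundary identity above, $(T_1^+-i\alpha\gamma^+)w=0$ on $S$. Replicating the Green-identity argument from Step~(ii) of the proof of Theorem~\ref{T2.3} (taking imaginary parts in Green's first identity for $A_1$ on $\Omega_2$ and exploiting the positive definiteness of $\mathbf{a}_1$ together with the choice $\alpha>0$) forces $\gamma^+w=0$ and $T_1^+w=0$, whence the third Green identity gives $w\equiv 0$ in $\Omega_2$. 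Finally, the jump relations for the single and double layer potentials across $S$, combined with $w\equiv 0$ and the $H^2_{loc}$-smoothness of $\mathcal{P}_\omega f_1$ across $S$, yield $\gamma^-u_1=\gamma^+u_2-\varphi_0$ and $T_1^-u_1=T_2^+u_2-\psi_0$, i.e.\ precisely \eqref{2.18-w}--\eqref{2.19-w}. The main obstacle I expect is the uniqueness step for $w$: it must be carried out through the \emph{canonical} rather than classical co-normal derivative (as $w$ is a priori only in $H^{1,0}(\Omega_2;A_1)$), and one has to verify carefully that the condition $\alpha>0$ really does force the interior Robin-type problem for the constant-coefficient operator $A_1$ to have only the trivial solution even when the frequency $\omega$ coincides with a resonant value of the interior Dirichlet or Neumann problem.
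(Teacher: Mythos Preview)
Your proposal is correct and follows essentially the same route as the paper's proof: the paper too recovers $A_2 u_2 = f_2$ by testing with $v\in\mathcal{D}(\Omega_2)$, subtracts Green's first identity from the variational equation to isolate the boundary relation $T_2^+u_2-\mathcal{K}_\omega^{-1}\mathcal{M}_\omega(\gamma^+u_2)=\Phi_\omega$, and then introduces the auxiliary function $w$ in $\Omega_2$ (differing from yours only by an overall sign) to run the interior Robin uniqueness argument already carried out in Theorem~\ref{T2.3}. Your stated concern about the Robin uniqueness step working through canonical co-normal derivatives and for all frequencies $\omega$ is precisely what the combined-field choice $\alpha>0$ is designed to ensure, and the paper handles it identically by reference to the Green's-identity argument in Theorem~\ref{T2.3}.
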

\begin{proof} (i) The first part of the theorem follows from   the  derivation of  variational equation \eqref{2.91-w}.

(ii) To prove the second part we proceed as follows.
  If $u_2$   solves
\eqref{2.91-w}, then the equation particularly holds for
 $v\in \mathcal{D}(\Omega_2)$,  which implies that $u_2$
is a  solution of  equation \eqref{2.17-w} in the sense of distributions and
evidently $u_2\in H^{1,\,0} ({\Omega_2};A_2)$   since $f_2\in H^0(\Omega_2)$  in view of
\eqref{2.20-w1}. Therefore the canonical co-normal derivative $T^+_2u_2\in H^{-\frac{1}{2}}(S)$ is
well-defined in the sense of \eqref{2.14-w}.

Further, it is easy to see that   function \eqref{2.96-w} is well-defined, solves the differential equation \eqref{2.16-w}
due to \eqref{2.57-ww-1}--\eqref{2.57-ww-2}, and belongs to the space $H^{1,\,0}_{loc}({\Omega_1};A_1)\cap Z({\Omega_1})$
in view of \eqref{2.20-w1}. Therefore, the canonical co-normal derivative $T^-_1u_1\in H^{-\frac{1}{2}}(S)$ is
well-defined in the sense of \eqref{2.15-w1}
as well.

 In order to show that   transmission conditions \eqref{2.18-w}--\eqref{2.19-w} are also satisfied,
we write Green's identity \eqref{2.90-w} for $u_2$ and arbitrary $v\in H^1({\Omega_2})$ and subtract it from \eqref{2.91-w} to obtain:
\[
 \langle T^+_2 u_2- \mathcal K_\omega^{-1} \,\mathcal M_\omega(\gamma^+u_2) - \Phi_\omega\,,\,\overline{\gamma^+v}\rangle_S =0.
\]
Whence
  $   {  T^+_2}u_2- { \mathcal K_\omega^{-1} \,\mathcal M_\omega}(\gamma^+u_2) -{ \Phi_\omega}=0$ on $S$,
 i.e.,
$
   {  T^+_2}u_2-\psi_0- {\mathcal K}_\omega^{-1} \,{ \mathcal M_\omega}(\gamma^+u_2)
   -{\mathcal K}_\omega^{-1} \,({ \Psi_\omega}f_1-{ \mathcal M_\omega}\varphi_0)=0$ on $S$,
which is equivalent to the condition
\[
 { \mathcal K_\omega} ( {  T^+_2}u_2-\psi_0)-   \,{ \mathcal M_\omega}(\gamma^+u_2-\varphi_0)
   - { \Psi_\omega}f_1 =0 \;\;\;\text{on}\;\;\;S.
\]
In turn, in view of \eqref{2.61-w}--\eqref{2.60-w}, the latter implies
\begin{align}
\label{2.97-w}
  {  T_1^+\,w -i\,\alpha\gamma^+w }=0  \;\;\;\text{on}\;\;\;S,
\end{align}
where
\begin{align}
\label{2.98-w}
 w := {  V_\omega}({  T^+_2}u_2-\psi_0)-{ W_\omega}(\gamma^+u_2-\varphi_0) -{ {\cal P}_\omega }  f_1  \;\;\;\; \text{in}\;\;\;{\Omega_2}.
\end{align}
The function $w$ satisfies the homogeneous equation $A_1w =0$ in ${\Omega_2}$ in view of \eqref{2.98-w}
and the homogeneous Robin condition \eqref{2.97-w}.
As in the proof of Theorem \ref{T2.3},  we can deduce that
{$\gamma^+ w =0$ and $T^+_1 w =0$ on $S$ for real $\alpha\neq 0$}, implying $w=0$ in $\Omega_2$.
Therefore, for the function $u_1$ defined in \eqref{2.96-w} by Lemma \ref{L.C1} we have:
\[
\gamma^-u_1=\gamma^-u_1 +\gamma ^+w =\gamma^+u_2 -\varphi_0, \;\;\;\;\;
{  T^-_1}u_1={  T^-_1}u_1 + T_1^+w ={  T^+}u_2-\psi_0,
\]
which completes the proof.
\end{proof}

\begin{corollary}
\label{C3.2}
The homogeneous  variational  problem \eqref{2.91-w} (with $\mathfrak{F}=0$) possesses only the trivial solution.
\end{corollary}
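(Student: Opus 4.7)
The plan is to deduce the corollary directly from the equivalence Theorem~\ref{T3.1} combined with the uniqueness Theorem~\ref{T2.4} for the transmission problem. No new analytic work is required; the proof is essentially a two-line consequence of what has already been proved, and I would present it as such.

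Concretely, let $u_2\in H^1(\Omega_2)$ satisfy the homogeneous variational equation $\mathfrak{B}(u_2,v)=0$ for all $v\in H^1(\Omega_2)$. This is exactly the variational equation \eqref{2.91-w} associated with the data $f_1=0$, $f_2=0$, $\varphi_{_0}=0$, $\psi_{_0}=0$, since for these data $\Phi_\omega=0$ and hence $\mathfrak{F}\equiv 0$. First I would test with $v\in\mathcal{D}(\Omega_2)$ to obtain $A_2u_2=0$ distributionally in $\Omega_2$, so automatically $u_2\in H^{1,0}(\Omega_2;A_2)$ and its canonical co-normal derivative $T^+_2u_2\in H^{-\frac{1}{2}}(S)$ is well defined.

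Next I would invoke Theorem~\ref{T3.1}(ii) with the zero data specified above: defining $u_1$ by formula \eqref{2.96-w} (with $f_1=0$, $\varphi_0=0$, $\psi_0=0$), the pair $(u_2,u_1)$ belongs to $H^{1,0}(\Omega_2;A_2)\times\bigl(H^{1,0}_{loc}(\Omega_1;A_1)\cap Z(\Omega_1)\bigr)$ and solves the homogeneous transmission problem \eqref{2.16-w}--\eqref{2.19-w}. By the uniqueness Theorem~\ref{T2.4}, this forces $u_1\equiv 0$ in $\Omega_1$ and $u_2\equiv 0$ in $\Omega_2$, which is the desired conclusion.

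There is no real obstacle here; the only thing to be careful about is matching the homogeneity of the data: the identification $\mathfrak{F}=0$ with the choice $f_1=f_2=0$, $\varphi_{_0}=\psi_{_0}=0$ is immediate from \eqref{2.93-w} and the definition of $\Phi_\omega$, so Theorem~\ref{T3.1}(ii) applies without modification and delivers a solution of the homogeneous transmission problem, to which Theorem~\ref{T2.4} then yields the trivial solution.
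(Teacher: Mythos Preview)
Your proposal is correct and follows essentially the same approach as the paper, which simply states that the corollary follows from the uniqueness Theorem~\ref{T2.4} and the equivalence Theorem~\ref{T3.1}. Your version merely spells out the details (including the observation that $u_2\in H^{1,0}(\Omega_2;A_2)$, which is already contained in the proof of Theorem~\ref{T3.1}(ii)), so the argument is the same.
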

\begin{proof}
It follows from the uniqueness and  equivalence Theorems \ref{T2.4} and \ref{T3.1}, respectively.
\end{proof}

 Further we analyse  the coercivity properties of the sesquilinear form $\mathfrak{B}$.
\begin{lemma}
\label{L3.3}
For the sesquilinear form $\mathfrak{B}$ defined in \eqref{2.92-w}   there  are real constants
$C^*_1>0$, $C^*_2>0$, and $C^*_3$ such that
\begin{align*}
\begin{array}{ll}
|\mathfrak{B}(u,v)|\leq C^*_1 \,\|u  \|_{_{H^1({\Omega_2})}}\, \|v  \|_{_{H^1({\Omega_2})}} & \;\;\;\forall\, u,\,v\in H^1({\Omega_2}),
\\
\mbox{\rm Re}\,\mathfrak{B}(u,u)\geq  C^*_2 \,\|u  \|^2_{_{H^1({\Omega_2})}} -
C^*_3\,\|u  \|^2_{_{H^0({\Omega_2})}} &\;\;\; \forall\, u \in H^1({\Omega_2}).
\end{array}
\end{align*}
\end{lemma}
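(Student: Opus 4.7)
The plan is to handle the two inequalities separately: continuity follows from standard mapping properties of the operators appearing in $\mathfrak{B}$, while the G\aa rding-type estimate will be obtained by combining uniform ellipticity in $\Omega_2$ with a pseudodifferential analysis of the non-local boundary operator $\mathcal{K}_\omega^{-1}\mathcal{M}_\omega$.

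For the continuity inequality, the Cauchy--Schwarz inequality together with the $L_\infty$ bound on $a^{(2)}_{kj}$ from \eqref{2.4-w} and the smoothness of $\kappa_2$ from \eqref{2.3-w} immediately yield
\[
\Bigl|\int_{\Omega_2}\bigl[a^{(2)}_{kj}\partial_j u\,\overline{\partial_k v}-\omega^2\kappa_2 u\,\overline{v}\bigr]\,dx\Bigr|\le C\,\|u\|_{H^1(\Omega_2)}\|v\|_{H^1(\Omega_2)}.
\]
For the boundary term, I would chain the continuity of the trace $\gamma^+:H^1(\Omega_2)\to H^{1/2}(S)$, of $\mathcal{M}_\omega:H^{1/2}(S)\to H^{-1/2}(S)$ (Appendix \ref{Appendix C}), and of $\mathcal{K}_\omega^{-1}:H^{-1/2}(S)\to H^{-1/2}(S)$ (Lemma \ref{L.C2}), together with the $H^{\pm 1/2}(S)$ duality pairing, to bound $|\langle\mathcal{K}_\omega^{-1}\mathcal{M}_\omega\gamma^+u,\,\overline{\gamma^+v}\rangle_S|$ by $C\|u\|_{H^1(\Omega_2)}\|v\|_{H^1(\Omega_2)}$.

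For the G\aa rding inequality, I take the real part of $\mathfrak{B}(u,u)$ and estimate the volume and surface contributions separately. The uniform ellipticity \eqref{2.4-w} together with the symmetry of $a^{(2)}_{kj}$ gives
\[
\mathrm{Re}\int_{\Omega_2}a^{(2)}_{kj}\partial_j u\,\overline{\partial_k u}\,dx\ge c_1\|\nabla u\|^2_{L_2(\Omega_2)},
\]
while the zero-order contribution is bounded in modulus by $\omega^2\|\kappa_2\|_\infty\|u\|^2_{L_2(\Omega_2)}$ and may be absorbed into a term of the form $-C\|u\|^2_{L_2(\Omega_2)}$. For the surface contribution, the key observation is that by \eqref{2.63-w} the operator $\mathcal{T}_\omega:=\mathcal{K}_\omega^{-1}\mathcal{M}_\omega:H^{1/2}(S)\to H^{-1/2}(S)$ coincides with the exterior Dirichlet-to-Neumann map for the radiating problem associated with $A_1$, hence is a classical pseudodifferential operator of order one on $S$. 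Its principal symbol coincides, modulo operators of order zero, with that of the exterior DtN for the constant-coefficient elliptic operator $a^{(1)}_{kj}\partial_k\partial_j$, since the $\omega$-dependent corrections to the fundamental solution \eqref{2.40-w} and to $\mathcal{V}_\omega,\mathcal{W}_\omega,\mathcal{W}'_\omega,\mathcal{L}_\omega$ generate only lower-order (compact) perturbations. A local-coordinate computation in the spirit of Step 2 of the proof of Theorem \ref{T4.1} shows that the real part of this principal symbol is of definite sign and homogeneous of degree one in $|\xi'|$, so G\aa rding's inequality for the elliptic principal-type PsDO $-\mathcal{T}_\omega$ yields
\[
-\mathrm{Re}\langle\mathcal{T}_\omega\varphi,\overline{\varphi}\rangle_S\ge c\|\varphi\|^2_{H^{1/2}(S)}-C\|\varphi\|^2_{L_2(S)},\qquad\forall\,\varphi\in H^{1/2}(S).
\]
Setting $\varphi=\gamma^+u$ and combining with the volume estimate produces
\[
\mathrm{Re}\,\mathfrak{B}(u,u)\ge c_1\|\nabla u\|^2_{L_2(\Omega_2)}-C_1\|u\|^2_{L_2(\Omega_2)}-C_2\|\gamma^+u\|^2_{L_2(S)}.
\]
Since the trace $H^1(\Omega_2)\to L_2(S)$ factors through the compact embedding $H^{1/2}(S)\hookrightarrow L_2(S)$, an Ehrling-type estimate gives $\|\gamma^+u\|^2_{L_2(S)}\le\epsilon\|u\|^2_{H^1(\Omega_2)}+C_\epsilon\|u\|^2_{L_2(\Omega_2)}$ for every $\epsilon>0$; choosing $\epsilon$ so small that $C_2\epsilon\le c_1/2$ and then adding $(c_1/2)\|u\|^2_{L_2(\Omega_2)}$ to both sides yields the claimed inequality with $C^*_2=c_1/2$ and an appropriate $C^*_3>0$.

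The main obstacle is the principal-symbol analysis of $\mathcal{T}_\omega$: one must rigorously justify both its order and the sign of the real part of its principal symbol. As an alternative to the explicit symbol computation, one could proceed by Green's first identity in the truncated exterior $\Omega_1(R)=\Omega_1\cap B(R)$ applied to the radiating solution $v$ with $\gamma^-v=\varphi$ and $A_1v=0$, then pass to $R\to\infty$ using the asymptotic form of $T_1v$ on $\Sigma_R$ established in Remark \ref{r2.2-1} to dispose of the spherical boundary integral, and extract the sign of the real part of the boundary quadratic form from the resulting anisotropic Dirichlet energy in the exterior.
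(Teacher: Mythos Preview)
Your proposal is correct and follows essentially the same route as the paper: continuity via Cauchy--Schwarz, trace, and the mapping properties of $\mathcal{K}_\omega^{-1}$ and $\mathcal{M}_\omega$; the G\aa rding estimate via uniform ellipticity for the volume part, a G\aa rding inequality for the first-order boundary pseudodifferential operator $-\mathcal{K}_\omega^{-1}\mathcal{M}_\omega$ (which the paper simply invokes from Remark~\ref{rC.4}, using that its principal symbol \eqref{C-3-1} is real and negative), and an Ehrling-type interpolation to absorb the lower-order boundary remainder into $\|u\|^2_{H^0(\Omega_2)}$. Your sketch of the principal-symbol justification and the alternative via Green's identity in the exterior are more detailed than the paper's treatment, but the overall structure is the same.
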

\begin{proof} The first equality follows from \eqref{2.92-w} by the Cauchy-Schwartz inequality and the trace theorem.
To prove the second inequality, we use the positive definiteness of the matrix
 $\mathbf{a}_2=\big[\,a^{(2)}_{kj}\,\big]_{k,j=1}^3$,  Remark \ref{rC.4}, and  the trace theorem to obtain
 \begin{align*}
 \mbox{\rm Re}\,\mathfrak{B}(u,u) &\geq
 c_1\,\|u  \|^2_{_{H^1({\Omega_2})}} -c_2\,\|u\|^2_{_{H^0({\Omega_2})}} +
  C_1 \| \gamma^+ u \|^2_{H^{\frac{1}{2}}(S)}-C_2\| \gamma^+ u \|^2_{H^0(S)}
  \\
&
\geq c_1 \,\|u  \|^2_{_{H^1({\Omega_2})}} - c_2\,\|u\|^2_{_{H^0({\Omega_2})}}-
  C _2\,  \| \gamma^+ u  \|^2_{_{H^\delta({S})}}
\geq c_1 \,\|u  \|^2_{_{H^1({\Omega_2})}} - c_2\,\|u\|^2_{_{H^0({\Omega_2})}}-
c_3\,  \| u  \|^2_{_{H^{\ha+\delta}({\Omega_2})}},
 \end{align*}
 where $c_1>0$, $c_2=\omega ^2\, \max\limits_{\overline{\Omega}_2}\, \kappa_2(x)$,
 $C_1>0$ and $C_2\geqslant 0$ are the constants involved in \eqref{C-4-dd}, $c_3>0$,
  and $\delta $ is an arbitrarily small positive number.
 Now, by Ehrling's lemma,
  cf. e.g. Theorem 7.30 in Renardy et al\cite{Renardy-Rogers2004},
 for arbitrarily small positive number $\varepsilon $  there is a positive constant $C(\varepsilon)$, such that
 \[
 \| u  \| _{_{H^{\ha+\delta}({\Omega_2})}}\leq \varepsilon  \| u  \| _{_{H^{1}({\Omega_2})}} +
 C(\varepsilon) \| u  \| _{_{H^{0}({\Omega_2})}},
 \]
which  completes the proof.
\end{proof}

Now we prove the following existence results.
 \begin{theorem}
\label{T5.4}
Let $\mathfrak{F}$ be a bounded linear functional on $H^1({\Omega_2})$. Then variational equation \eqref{2.91-w} is uniquely solvable in the space $H^1({\Omega_2})$.
\end{theorem}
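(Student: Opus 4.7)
The plan is to combine the Gårding inequality of Lemma \ref{L3.3} with the Fredholm alternative and the uniqueness statement of Corollary \ref{C3.2}. Introduce the bounded antilinear operator $\mathcal{B}\colon H^1(\Omega_2)\to [H^1(\Omega_2)]^*$ defined by $\langle \mathcal{B}u,v\rangle=\mathfrak{B}(u,v)$; its continuity follows from the first estimate of Lemma \ref{L3.3}. The variational problem \eqref{2.91-w} is then equivalent to the operator equation $\mathcal{B}u_2=\mathfrak{F}$ in $[H^1(\Omega_2)]^*$.

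First I would split $\mathcal{B}=\mathcal{B}_0+\mathcal{C}$, where $\mathcal{B}_0$ is associated with the sesquilinear form $\mathfrak{B}(u,v)+C_3^*(u,v)_{H^0(\Omega_2)}$ and $\mathcal{C}$ is induced by the antilinear form $-C_3^*(u,v)_{H^0(\Omega_2)}$. The second inequality of Lemma \ref{L3.3} gives
\begin{equation*}
\mathrm{Re}\,\langle \mathcal{B}_0 u,u\rangle \ge C_2^*\,\|u\|^2_{H^1(\Omega_2)}\qquad\forall\, u\in H^1(\Omega_2),
\end{equation*}
so $\mathcal{B}_0$ is coercive, and by the Lax-Milgram lemma it is an isomorphism of $H^1(\Omega_2)$ onto $[H^1(\Omega_2)]^*$. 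The perturbation $\mathcal{C}$ factors through the embedding $H^1(\Omega_2)\hookrightarrow H^0(\Omega_2)$, which is compact by the Rellich-Kondrachov theorem since $\Omega_2$ is bounded with smooth boundary; hence $\mathcal{C}\colon H^1(\Omega_2)\to [H^1(\Omega_2)]^*$ is compact.

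Consequently, $\mathcal{B}=\mathcal{B}_0+\mathcal{C}=\mathcal{B}_0(I+\mathcal{B}_0^{-1}\mathcal{C})$ is a compact perturbation of an isomorphism, and the Fredholm alternative applies: $\mathcal{B}$ is Fredholm with index zero, so its surjectivity is equivalent to its injectivity. Injectivity is precisely Corollary \ref{C3.2}, which asserts that the homogeneous problem has only the trivial solution. Therefore $\mathcal{B}$ is an isomorphism, giving unique solvability of $\mathcal{B}u_2=\mathfrak{F}$ in $H^1(\Omega_2)$ for any $\mathfrak{F}\in [H^1(\Omega_2)]^*$, together with a continuous dependence estimate $\|u_2\|_{H^1(\Omega_2)}\le C\,\|\mathfrak{F}\|_{[H^1(\Omega_2)]^*}$.

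No step poses a serious obstacle: the Gårding inequality and compact embedding do all the analytic work, while uniqueness is already established. The only mild point to verify is that the antilinear functional $v\mapsto \langle \mathcal{K}_\omega^{-1}\mathcal{M}_\omega(\gamma^+u)\,,\,\overline{\gamma^+v}\rangle_S$ is well defined and bounded on $H^1(\Omega_2)\times H^1(\Omega_2)$; this is immediate from the mapping properties of $\mathcal{K}_\omega^{-1}\mathcal{M}_\omega\colon H^{1/2}(S)\to H^{-1/2}(S)$ stated earlier and the continuity of the trace operator $\gamma^+\colon H^1(\Omega_2)\to H^{1/2}(S)$.
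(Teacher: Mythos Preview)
Your proof is correct and follows essentially the same route as the paper's: shift the sesquilinear form by a multiple of the $L_2$ inner product to get coercivity via Lemma~\ref{L3.3}, invoke Lax--Milgram for the shifted operator, observe that the shift is a compact perturbation, and conclude by the Fredholm alternative together with the uniqueness Corollary~\ref{C3.2}. The only cosmetic difference is that the paper phrases this with a family $\mathbf{T}_\lambda:H^1(\Omega_2)\to\widetilde{H}^{-1}(\Omega_2)$ indexed by $\lambda$, while you fix $\lambda=C_3^*$ from the outset; also note that $\mathcal{B}$ is linear (not antilinear) in $u$, since $\mathfrak{B}$ is linear in its first argument.
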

\begin{proof}
By Lemma \ref{L3.3} the sesquilinear functional
$\mathfrak{B}_\lambda(u,v):=\mathfrak{B}(u,v)+ \lambda\,\langle u,v\rangle_{ \Omega_2 }$ with $\lambda>|C^*_3|$
is positive and bounded below on the space $H^1(\Omega_2)\times H^1(\Omega_2)$.
Due to the  Lax-Milgram lemma,
 $\mathfrak{B}_\lambda$ defines an invertible linear operator
$\mathbf{T}_\lambda:H^1(\Omega_2)\to \widetilde{H}^{-1}(\Omega_2)$ for $\lambda>|C^*_3|$.
Therefore for arbitrary $\lambda$ the operator $\mathbf{T}_\lambda$ is Fredholm with zero index (see, e.g.
  Theorem 2.33 in  McLean\cite{McL}), since the sesquilinear form  $\lambda\,\langle u,v\rangle_{\Omega_2}$
  defines a compact   imbedding  operator $\lambda \,{I}: H^1(\Omega_2)\to \widetilde{H}^{-1}(\Omega_2)$,
  where $ {I}$ is the identity operator.
By Corollary \ref{C3.2}  the operator
  $\mathbf{T}_0$ defined by the sesquilinear form $\mathfrak{B}(u,v)=\mathfrak{B}_0(u,v)$ possesses the trivial null-space, and consequently is invertible, which completes the proof.
\end{proof}
  \begin{theorem}
\label{T5.4-D}
Let conditions \eqref{2.20-w1} be fulfilled. Then
transmission problem \eqref{2.16-w}--\eqref{2.20-w1} is uniquely solvable in the space
 $H^{1,\,0}({\Omega_2};A_2)\times \big( H^{1,\,0}_{loc}({\Omega_1};A_1)\cap Z({\Omega_1})\big)$.
\end{theorem}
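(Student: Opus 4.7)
The plan is to combine the three previously-established results about the variational formulation: the equivalence Theorem \ref{T3.1}, the existence/uniqueness Theorem \ref{T5.4} for the variational equation, and the transmission-problem uniqueness Theorem \ref{T2.4}. Concretely, I would argue that variational problem \eqref{2.91-w} is uniquely solvable in $H^1(\Omega_2)$ and then lift that solution, via formula \eqref{2.96-w}, to a pair $(u_2,u_1)$ in the target space.

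First I would verify that the antilinear functional $\mathfrak{F}$ in \eqref{2.93-w} is bounded on $H^1(\Omega_2)$ under conditions \eqref{2.20-w1}. The volume term $-\int_{\Omega_2} f_2\,\overline{v}\,dx$ is controlled by $\|f_2\|_{H^0(\Omega_2)}\|v\|_{H^0(\Omega_2)}$. For the boundary term I use the trace theorem together with the fact that $\Phi_\omega=\mathcal{K}_\omega^{-1}[\Psi_\omega f_1-\mathcal{M}_\omega\varphi_0]+\psi_0\in H^{-\frac{1}{2}}(S)$: this follows because $\psi_0\in H^{-\frac{1}{2}}(S)$, $\varphi_0\in H^{\frac{1}{2}}(S)$, $f_1\in H^0_{comp}(\Omega_1)$, and the mapping properties of $\Psi_\omega$, $\mathcal{M}_\omega$ together with the invertibility of $\mathcal{K}_\omega:H^{-\frac{1}{2}}(S)\to H^{-\frac{1}{2}}(S)$ provided by Lemma \ref{L.C2}.

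Next I would apply Theorem \ref{T5.4} to obtain a unique $u_2\in H^1(\Omega_2)$ solving \eqref{2.91-w}. Testing \eqref{2.91-w} against $v\in\mathcal{D}(\Omega_2)$ shows $A_2 u_2=f_2$ in the distributional sense, and since $f_2\in H^0(\Omega_2)$ one obtains $u_2\in H^{1,0}(\Omega_2;A_2)$; hence $T_2^+u_2\in H^{-\frac{1}{2}}(S)$ is well defined via \eqref{2.14-w}. Then I define $u_1$ by \eqref{2.96-w}; by \eqref{2.57-ww-1}--\eqref{2.57-ww-2} and the radiation properties of the potentials collected in Appendix \ref{Appendix C}, $u_1\in H^{1,0}_{loc}(\Omega_1;A_1)\cap Z(\Omega_1)$. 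The second part of Theorem \ref{T3.1} then guarantees that $(u_2,u_1)$ satisfies the transmission conditions \eqref{2.18-w}--\eqref{2.19-w}, hence solves the full transmission problem. Uniqueness is immediate from Theorem \ref{T2.4}.

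The step that looks least mechanical is the boundedness of $\mathfrak{F}$, since it requires chaining the continuity of several boundary operators through $\mathcal{K}_\omega^{-1}$; however, since all of these mapping properties are already collected in Lemma \ref{L.C1}--\ref{L.C2} and the definitions \eqref{2.61-w}--\eqref{2.60-w}, this is just a bookkeeping estimate. There is no genuine obstacle remaining beyond assembling the equivalence, existence, and uniqueness statements in the right order; continuous dependence on the data follows from the open mapping theorem applied to the resulting solution operator on the data space indicated in \eqref{2.20-w1}.
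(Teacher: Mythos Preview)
Your proposal is correct and follows essentially the same route as the paper: verify that $\mathfrak{F}$ is a bounded functional on $H^1(\Omega_2)$ via the Cauchy--Schwarz inequality, the trace theorem, and the mapping properties of $\mathcal K_\omega$, $\mathcal M_\omega$, $\Psi_\omega$; then combine existence Theorem~\ref{T5.4}, equivalence Theorem~\ref{T3.1}, and uniqueness Theorem~\ref{T2.4}. The paper's proof is slightly terser (it invokes Theorem~\ref{T3.1}(ii) directly rather than re-deriving that $u_2\in H^{1,0}(\Omega_2;A_2)$ and that $u_1$ defined by \eqref{2.96-w} lies in the right space), and it does not mention continuous dependence, but otherwise the arguments coincide.
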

\begin{proof} If conditions \eqref{2.20-w1} are satisfied then the linear functional $\mathfrak{F}$ given by \eqref{2.93-w} is bounded,
\begin{align*}
|\mathfrak{F}(v)|\leq C \,\|v  \|_{_{H^1({\Omega_2})}} & \;\;\;\forall\, v\in H^1({\Omega_2}),
\end{align*}
which follows from the  Cauchy-Schwartz inequality, trace theorem, and properties of the operators
${\mathcal K_\omega}$, ${\mathcal M_\omega}$ and  ${\Psi_\omega}$
  defined by  relations \eqref{2.61-w}--\eqref{2.60-w}.
  \\
Therefore by equivalence Theorem \ref{T3.1} and existence Theorem \ref{T5.4} along with uniqueness Theorem \ref{T2.4}
 we conclude that the
transmission problem \eqref{2.16-w}--\eqref{2.20-w1} is uniquely solvable.
   \end{proof}
\begin{remark}
\label{r5.6}
From the equivalence Theorem  \ref{T2.3}  and  existence Theorem \ref{T5.4-D} it follows that
the LBDIE  system \eqref{2.71-w}--\eqref{2.76-w} possesses a unique solution in the space $\mathbf{H}$
defined by \eqref{2.78-w}.
However, this does not imply  the results obtained in Section \ref{S-4} concerning neither
  the invertibility of the localized
boundary-domain matrix integral operator
generated by the left hand side expressions in \eqref{2.71-w}--\eqref{2.76-w}    nor
  the solvability in the space $\mathbf{X}$ of system \eqref{4.71-w}-\eqref{4.76-w} with arbitrary
right hand side functions from the space $\mathbf{Y}$ (see \eqref{4.79-w}-\eqref{4.78-w}).
The case is that Theorems  \ref{T2.3}  and   \ref{T5.4-D} yield unique solvability of
system \eqref{2.71-w}--\eqref{2.76-w} with only special form right hand-side functions
represented by volume and surface integrals (see the right hand side functions in \eqref{2.71-w}--\eqref{2.76-w}).
\end{remark}
\appendix
\section{Classes of cut-off functions}
\label{Appendix A}

 Here we  present some  classes of  localizing cut off functions   (for details see Chkadua et al\cite{CMN3}).
\begin{definition}
\label{dA.1}
We say $\chi \in X^k$ for integer $k\geq 0$  if
$\chi
(x)=\breve{\chi}(|x|)$, $\breve{\chi} \in W_1^k(0,\infty)$ and
$\varrho\breve{\chi}(\varrho) \in L_1(0,\infty)$.

We say $\chi \in X^k_+$ for integer $k\geq 1$  if $\chi \in X^k$,
${\chi} (0)=1$, and  $\sigma_\chi(\omega)>0$ for all $\omega\in\R$, where
 \[
    \sigma_\chi(\omega):=\left\{
    \begin{array}{l}
  \dst  \frac{\hat{\chi}_s(\omega)}{\omega}>0\;\;\text{for}\;\; \omega\in\R \setminus\{0\}\,,\\[3mm]
   \dst \int _{0}^{\infty} \varrho \breve{\chi}\,(\varrho )  \,d\varrho\;\; \text{for}\;\;\omega=0\,,
    \end{array}
    \right.
\qquad\quad
  \hat{\chi}_s(\omega):=\int_{0}^{\infty}
  \breve{\chi}\,(\varrho ) \;\sin(\varrho\,\omega) \,d\varrho\,.
 \]
\end{definition}
The following lemma provides an easily verifiable sufficient condition for
non-negative non-increasing functions to belong to the class $X^k_+$.
\begin{lemma}[Chkadua et al\cite{CMN3}, Lemma 3.2]
\label{lA.2}
Let $k\ge 1$. If $\chi \in X^k$, $\breve{\chi} (0)=1$,
$\breve{\chi}(\varrho)\geq 0$ for all $\varrho\in (0,\infty)$, and
$\breve{\chi}$ is a non-increasing function on $[0,+\infty)$, then
${\chi} \in X^k_+$.
\end{lemma}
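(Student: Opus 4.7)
The plan is to reduce the claim to the single nontrivial point that $\sigma_\chi(\omega)>0$ for all $\omega\in\mathbb R$, since $\chi\in X^k$ and $\breve\chi(0)=1$ are given. The value at $\omega=0$ is immediate: $\chi\in X^k$ with $k\ge 1$ gives $\breve\chi\in W_1^k(0,\infty)\hookrightarrow C[0,\infty)$, so by $\breve\chi(0)=1$ there exists $\delta>0$ with $\breve\chi(\varrho)\ge 1/2$ on $[0,\delta]$; combined with $\breve\chi\ge 0$ on $(0,\infty)$ this yields $\int_0^\infty \varrho\,\breve\chi(\varrho)\,d\varrho\ge \int_0^\delta \varrho/2\,d\varrho>0$. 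By oddness of $\sin$, $\sigma_\chi(-\omega)=\sigma_\chi(\omega)$, so it suffices to treat $\omega>0$ and show $\hat\chi_s(\omega)>0$.

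First I would split the half-line into intervals of length $\pi/\omega$ and pair consecutive ones. Writing $\varrho=\tau+\pi/\omega$ on each odd interval gives
\begin{equation*}
\hat\chi_s(\omega)=\sum_{j=0}^\infty \int_{2j\pi/\omega}^{(2j+1)\pi/\omega}\bigl[\breve\chi(\varrho)-\breve\chi(\varrho+\pi/\omega)\bigr]\sin(\varrho\omega)\,d\varrho.
\end{equation*}
Since $\breve\chi$ is non-increasing the brackets are $\ge 0$, and $\sin(\varrho\omega)\ge 0$ on each integration interval, so each term is non-negative; in particular $\hat\chi_s(\omega)\ge 0$.

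The key step is promoting this to strict positivity. I would rescale $t=\varrho\omega-2j\pi$, interchange sum and integral (justified because $\breve\chi\in L_1(0,\infty)$ so the alternating series converges absolutely in the mean sense), and obtain
\begin{equation*}
\hat\chi_s(\omega)=\frac{1}{\omega}\int_0^\pi\sin(t)\,S(t)\,dt,\qquad S(t):=\sum_{k=0}^\infty(-1)^k\breve\chi\bigl((t+k\pi)/\omega\bigr).
\end{equation*}
The essential observation is that for non-negative non-increasing $\{a_k\}$ with $a_k\to 0$, the alternating sum $\sum(-1)^k a_k$ is strictly positive whenever $a_0>0$: if all pairwise differences $a_{2j}-a_{2j+1}$ vanished, then $\{a_k\}$ would be constant and, as $a_k\to 0$, one would have $a_0=0$. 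Here $a_k=\breve\chi((t+k\pi)/\omega)$; monotonicity is given, and $\breve\chi(\varrho)\to 0$ as $\varrho\to\infty$ because $\breve\chi\ge 0$ is non-increasing and lies in $L_1(0,\infty)$ (as $\chi\in X^k\subset W_1^1$). Moreover $a_0=\breve\chi(t/\omega)>0$ on an interval $(0,t_0)\subset(0,\pi)$ by continuity of $\breve\chi$ at $0$ and $\breve\chi(0)=1$. Hence $S(t)>0$ on $(0,t_0)$ and $\sin(t)>0$ on $(0,\pi)$, yielding $\hat\chi_s(\omega)>0$, as required.

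The only mildly delicate point I anticipate is justifying the termwise manipulation and establishing $\breve\chi(\varrho)\to 0$ at infinity; the former is handled by dominated convergence using the $L_1$ control $|S(t)|\le \breve\chi(t/\omega)$ and the fact that $t\mapsto\breve\chi(t/\omega)$ is integrable, and the latter is a standard consequence of monotonicity plus integrability. Every other step is linear, so putting these pieces together gives $\sigma_\chi(\omega)>0$ for all $\omega\in\mathbb R$, completing the verification that $\chi\in X^k_+$.
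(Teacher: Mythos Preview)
Your overall strategy is sound: reduce to $\hat\chi_s(\omega)>0$ for $\omega>0$, split into half-periods, and use monotonicity to get non-negativity of each summand. The paper itself does not reproduce a proof (it simply cites the external reference), and your argument is essentially the standard one. There is, however, a genuine gap in your strict-positivity step.

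You claim that for a non-negative non-increasing sequence $\{a_k\}$ with $a_k\to 0$ the alternating sum $\sum_{k\ge 0}(-1)^k a_k$ is strictly positive whenever $a_0>0$, on the grounds that ``if all pairwise differences $a_{2j}-a_{2j+1}$ vanished, then $\{a_k\}$ would be constant''. This inference is false: take $a_{2j}=a_{2j+1}=2^{-j}$. Then $a_0=1>0$, the sequence is non-increasing and tends to $0$, every difference $a_{2j}-a_{2j+1}$ vanishes, yet the sequence is not constant and the alternating sum equals $0$. So for a \emph{fixed} $t$ you cannot conclude $S(t)>0$ merely from $a_0=\breve\chi(t/\omega)>0$.

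The repair is easy and stays within your framework. If $\hat\chi_s(\omega)=0$, then every term in your first displayed sum vanishes (each integrand is continuous and non-negative). The $j=0$ term, together with $\sin(\varrho\omega)>0$ on $(0,\pi/\omega)$, gives $\breve\chi(\varrho)=\breve\chi(\varrho+\pi/\omega)$ for all $\varrho\in[0,\pi/\omega]$; combined with monotonicity this forces $\breve\chi\equiv\breve\chi(0)=1$ on $[0,2\pi/\omega]$. Feeding this into the $j=1$ term and iterating yields $\breve\chi\equiv 1$ on $[0,\infty)$, contradicting $\breve\chi\in L_1(0,\infty)$. Hence $\hat\chi_s(\omega)>0$, and the remainder of your argument goes through unchanged.
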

Here are some  particular examples of cut off functions,
\begin{eqnarray*}
& \chi_{ 1k } (x)=
 \left\{
 \begin{array}{lll}
   \displaystyle \Big[\,
 1-\frac{|x| }{ \varepsilon
 }\,\Big]^{k} & \text{for} & |x|< \varepsilon
 ,\\
0 & \text{for} & |x|\geq \varepsilon,
 \end{array}
 \right.
\;\;
  & \chi_{ 2k } (x)=
 \left\{
 \begin{array}{lll}
   \displaystyle \Big[\,
 1-\frac{|x|^2}{ \varepsilon^2
 }\,\Big]^{k} & \text{for} & |x|< \varepsilon
 ,\\
0 & \text{for} & |x|\geq \varepsilon,
 \end{array}
 \right.
\;\;
  \chi_{3} (x)=
 \left\{
 \begin{array}{lll}
    \displaystyle \exp \Big[\,{\frac{|x|^2}{|x|^2-\varepsilon^2}}\,\Big] &
    \text{for} & |x|< \varepsilon
  ,\\
0 & \text{for} & |x|\geq \varepsilon.
 \end{array}
 \right.
  \end{eqnarray*}
Due to Lemma \ref{lA.2} we have  $\chi_{1k} \in X^k_+$,  $\chi_{2k} \in X^k_+\cap C^{k-1}(\mathbb{R}^3)$,
and $\chi_{3} \in X^{\infty}_+\cap C^\infty (\mathbb{R}^3)$.

 \section{Properties of localized potentials}
 \label{Appendix B}

 Here we collect some theorems describing mapping properties of the localized potentials \eqref{2.25-w}
 and \eqref{2.27-w},  and    the localized boundary operators generated by them
 \begin{eqnarray}
\label{B-d1}
{\cal V}_\chi \,g(y):=-\int _{S} {  P_\chi}(x-y)\, g(x)\,dS_x,
\quad
{\cal W} _\chi\,g(y):=-\int _{S}
\big[\,{ T_2}(x, \partial_x)\,{  P_\chi}(x-y)\,\big]\,
\, g(x)\,dS_x,  \;\;\;y\in S.
\end{eqnarray}
 Note that  ${\cal V}_\chi$ is a weakly singular  integral operator
(pseudodifferential operator of order $-1$), while ${\cal W}_\chi$ is a
 singular integral operator (pseudodifferential operator of order $0$).

  Remark that if $S\in C^\infty$ and  a  cut off function $\chi$ is infinitely differentiable,
 then the localized potentials and the corresponding boundary operators have
 the same mapping properties as the corresponding harmonic potentials (see, e.g.,  Miranda\cite{Mir},
 Hsiao \& Wendland\cite{HW}). However, for
cut off functions of finite smoothness the localized potential operators  possess quite different properties,
in particular, their smoothness is reduced  and the smoothness exponents depend on the
smoothness of a cut off function  $\chi$.
Properties of the localized  potentials needed in our analysis in the main text are presented below
 (detailed proofs can be found in Chkadua et al \cite{CMN3,CMN6}).

\begin{theorem}[Chkadua et al\cite{CMN3}, Theorems 5.6 and 5.10]
\label{tB.4}
 The following operators are continuous
\begin{align*}
 {\cal P}_\chi
 :\;& H^s({\Omega_2}) \to H^{s+2,s}({\Omega_2};\Delta),\qquad
 -\ha<s<\ha, \qquad \chi  \in X^{1}.
\\
{  V_\chi}
   :\;& H^{s-\frac{3}{2}}(S) \to
 H^{s}( \Omega_2),\quad \ha<s<k+\ha\ ,\quad
\text{if}\;\; \chi  \in X^k, \quad   k= 1, 2, ...
\\
 :\;&  H^{s-\frac{3}{2}}(S) \to
 H^{s,s-1}(\Omega_2;\Delta) ,
 \quad \ha< s<\tha,  \quad\text{if}\;\; \chi  \in X^2,
 \\
 { W_\chi}   :\;&   H^{s-\frac{1}{2}}(S)\to H^{s}({\Omega_2}) ,
 \quad \ha< s<k-\ha\ ,\quad
\text{if}\;\;\chi  \in X^k, \quad   k=   2, 3,...
 \\
 :\;&  H^{s-\frac{1}{2}}(S)\to
 H^{s,s-1}(\Omega_2;\Delta) ,
 \quad \ha< s<\tha,  \quad\text{if}\;\; \chi  \in X^3,
\end{align*}
where
$
H^{t,\,r}(\Omega_2; \Delta ):=\{ u\in H^{t}(\Omega_2)\,:\, \Delta u\in H^{r}(\Omega_2) \}
$.
\end{theorem}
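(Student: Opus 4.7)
\textbf{Proof plan for Theorem \ref{tB.4}.}

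The plan is to reduce the analysis of the localized potentials to classical harmonic potential theory by exploiting the decomposition $P_\chi(x)=P_0(x)\chi(x)$ with $P_0(x)=-1/(4\pi|x|)$, and then track how the limited smoothness of the cut-off $\chi$ propagates into smoothness thresholds for the operators. A direct calculation based on the Leibniz rule gives, in the distributional sense,
\[
\Delta P_\chi(x)=\chi(0)\,\delta(x)+\rho_\chi(x)=\delta(x)+\rho_\chi(x),
\]
where $\rho_\chi$ is supported in the support of $\chi$ and its regularity is one level below the top derivative of $\chi$ available, i.e.\ roughly $\rho_\chi\in W_1^{k-2}$ for $\chi\in X^k$ (with appropriate weighted behaviour near the origin). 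This identity will be the workhorse throughout.

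\textbf{Step 1 (volume potential).} For $\mathcal{P}_\chi$, the representation $\mathcal{P}_\chi=\mathcal{P}_0+\mathcal{P}_{\chi-1}$, together with classical Calder\'on--Zygmund / Bessel-potential results for $\mathcal{P}_0$, already yields the $+2$ smoothness gain on the range $-\tfrac12<s<\tfrac12$ in $H^{s+2}(\Omega_2)$. The correction $\mathcal{P}_{\chi-1}$ has a kernel vanishing at $x=y$, and its Fourier symbol decays fast enough when $\chi\in X^1$ to make it a smoothing operator on this range. The $\Delta$-membership part, i.e.\ the inclusion into $H^{s+2,s}(\Omega_2;\Delta)$, is then immediate from the pointwise identity $\Delta\mathcal{P}_\chi h=h+\rho_\chi*h$, because $h\in H^s(\Omega_2)$ by hypothesis and the convolution $\rho_\chi* h$ preserves $H^s$-regularity (one needs only integrability of $\rho_\chi$, secured by $\chi\in X^1$).

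\textbf{Step 2 (single layer).} For $V_\chi$, I would split $V_\chi g=V_0 g+(V_\chi-V_0)g$. Classical potential theory (Costabel, McLean) gives $V_0:H^{s-3/2}(S)\to H^s(\Omega_2)$ for all admissible $s$. The localised correction has kernel $(\chi(x-y)-1)/(4\pi|x-y|)$, which is smooth away from the diagonal (compactly supported there) and whose near-diagonal Fourier symbol has decay governed by the number of derivatives of $\chi$ that are controllable in $L_1$. Precisely, $\chi\in X^k$ ensures boundedness from $H^{s-3/2}(S)$ into $H^s(\Omega_2)$ up to $s<k+\tfrac12$; this is the sharp threshold because beyond it the radial derivative $\breve\chi^{(k)}$ may fail to be integrable. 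For the refined statement with values in $H^{s,s-1}(\Omega_2;\Delta)$, I again use $\Delta P_\chi=\delta+\rho_\chi$ to compute $\Delta V_\chi g=-\gamma^*g+\widetilde\rho_\chi*(\gamma^*g)$ (where $\gamma^*$ denotes the adjoint of the trace), which lies in $H^{s-1}(\Omega_2)$ when $g\in H^{s-3/2}(S)$ and $\tfrac12<s<\tfrac32$, provided $\chi\in X^2$.

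\textbf{Step 3 (double layer).} For $W_\chi$, the argument is analogous, but one level of cut-off smoothness is consumed by the extra derivative inside $T_2(x,\partial_x)P_\chi$. Decomposing $W_\chi=W_0+(W_\chi-W_0)$ and tracking the identity $T_2(x,\partial_x)P_\chi(x-y)=T_2(x,\partial_x)[\chi(x-y)P_0(x-y)]$ via Leibniz, one obtains a singular kernel plus remainder whose regularity depends on $\nabla\chi$. This explains the shift $k-\tfrac12$ (versus $k+\tfrac12$ for $V_\chi$) and the requirement $\chi\in X^3$ for the $\Delta$-sensitive statement. The final $H^{s,s-1}(\Omega_2;\Delta)$ bound follows again from $\Delta W_\chi g=\widetilde{\widetilde\rho}_\chi*(\gamma^*(a^{(2)}_{kj}n_k g))_{,j}$ after using \eqref{4.27-w}.

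\textbf{Principal obstacle.} The core technical difficulty is not the $+2$-gain itself (harmonic potentials give it for free) but proving the \emph{sharpness} of the thresholds $s<k\pm\tfrac12$ for cut-offs of merely finite smoothness. This requires controlling the Fourier transform of $\chi(x)/|x|$ and of its derivatives in weighted $L_1$-spaces, showing that the correction convolution operator is bounded on $H^s$ precisely on the stated range. Once these weighted-Fourier estimates are in hand (the content of the auxiliary lemmas in Chkadua, Mikhailov and Natroshvili\cite{CMN3}), the rest of the proof is a routine combination of classical potential theory with the decomposition $P_\chi=P_0+P_0(\chi-1)$.
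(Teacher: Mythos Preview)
The paper does not supply its own proof of Theorem~\ref{tB.4}: the result is quoted verbatim from Chkadua, Mikhailov and Natroshvili\cite{CMN3} (Theorems~5.6 and~5.10), with the appendix explicitly stating that ``detailed proofs can be found in Chkadua et al~\cite{CMN3,CMN6}.'' There is therefore nothing in the present paper to compare your attempt against.

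As to your sketch itself, the strategy---decompose $P_\chi=P_0+P_0(\chi-1)$, invoke classical harmonic-potential mapping results for the principal part, and control the correction via Fourier decay of $\chi$---is the natural one and is indeed the route taken in \cite{CMN3}. A few small points deserve tightening. First, in Step~2 your formula $\Delta V_\chi g=-\gamma^*g+\widetilde\rho_\chi*(\gamma^*g)$ is not quite what you need in $\Omega_2$: since $\gamma^*g$ is a distribution supported on $S=\partial\Omega_2$, its restriction to the open set $\Omega_2$ vanishes, so the $H^{s-1}(\Omega_2)$ membership comes entirely from the convolution term (and the sign is opposite). Second, the phrase ``sharpness of the thresholds'' is a red herring: the theorem asserts only continuity on the stated ranges, not that these ranges are optimal, so you need boundedness \emph{on} the range, not failure outside it. Third, and most substantively, you correctly identify the ``principal obstacle'' as the weighted Fourier/$L_1$ estimates for $\chi(x)/|x|$ and its derivatives, but you then defer these to \cite{CMN3}; since those estimates are the entire analytic content of the theorem, your sketch is essentially a roadmap to the existing proof rather than an independent argument.
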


\begin{theorem}[Chkadua et al\cite{CMN3}, Corollary 5.12 and  Theorem 5.13]
\label{tB.7}
Let $\chi \in X^{2}$, $\psi\in H^{-\frac{1}{2}}(S)$, and
 $\varphi\in H^{\frac{1}{2}}(S)$.
Then there hold the following
relations on $S$
\begin{eqnarray}
&
 \label{3.8j}
\gamma ^+{  V_\chi}\psi={ \V_\chi}\psi,
 \qquad
 \gamma^+ {  W_\chi}\varphi = -\mu\, \varphi +{ \W_\chi} \varphi
  \;\;\;\text{with}\;\;
\mu(y):=\frac{1}{2}\,a^{(2)}_{kj}(y)\,n_k(y)\,n_j(y)>0,\;\;y\in S.
\end{eqnarray}
\end{theorem}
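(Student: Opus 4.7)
The plan is to reduce both trace identities to the classical jump relations for the harmonic single and double layer potentials by splitting the localized quasi--parametrix into its singular and regular parts. Concretely, I would write
\[
  P_\chi(x) \;=\; P_0(x)+ R_\chi(x), \qquad P_0(x) := -\frac{1}{4\pi|x|}, \qquad R_\chi(x) := \frac{1-\chi(x)}{4\pi|x|}.
\]
Because $\chi$ is radial with $\chi(0)=1$ (the normalization that is used throughout the paper), $1-\chi(x) = O(|x|)$ near the origin, so $R_\chi$ is of order $O(1)$ at $0$, and its first derivatives are of order $O(|x|^{-1})$. Consequently, writing $V_\chi = V_0+V_R$ and $W_\chi = W_0+W_R$ accordingly, the operators $V_R,W_R$ have weakly singular kernels (integrable on $S$), while $V_0,W_0$ are classical harmonic layer potentials.

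For the single layer identity $\gamma^+V_\chi\psi = \mathcal V_\chi\psi$, the kernel $P_\chi(x-y)$ is weakly singular of order $|x-y|^{-1}$, so for smooth $\psi$ the function $V_\chi\psi$ is continuous up to $S$ from both sides and its trace coincides with the boundary integral $\mathcal V_\chi\psi$. To extend this to $\psi\in H^{-\frac{1}{2}}(S)$, I would approximate by smooth densities and pass to the limit using the continuity of $V_\chi\colon H^{-\frac{1}{2}}(S)\to H^1(\Omega_2)$ and $\mathcal V_\chi\colon H^{-\frac{1}{2}}(S)\to H^{\frac{1}{2}}(S)$ supplied by Theorem~\ref{tB.4}, together with the trace continuity $\gamma^+\colon H^1(\Omega_2)\to H^{\frac{1}{2}}(S)$.

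For the double-layer identity the substantive step is to identify the jump coefficient as exactly $-\mu(y)$. The contribution from $W_R$ is continuous across $S$ (its kernel $T_2(x,\partial_x)R_\chi(x-y)$ is at most weakly singular of order $|x-y|^{-1}$ and its direct value is continuous in $y$), so it contributes only to $\mathcal W_\chi\varphi$. Hence it suffices to compute the jump of the operator with principal kernel $T_2(x,\partial_x)P_0(x-y) = a^{(2)}_{kj}(x)\,n_k(x)\,\partial_{x_j}P_0(x-y)$. Using the smoothness of $a^{(2)}_{kj}$ and of $S$, the substitutions $a^{(2)}_{kj}(x)=a^{(2)}_{kj}(y)+O(|x-y|)$ and $n_k(x)=n_k(y)+O(|x-y|)$ generate only weakly singular, jump-free perturbations; the true jump therefore equals that of the frozen-coefficient kernel
\[
   a^{(2)}_{kj}(y)\,n_k(y)\,\partial_{x_j}P_0(x-y).
\]
To evaluate this jump I would localize near $y$, flatten $S$ by an orthogonal change of variables sending $n(y)$ to $e_3$, and compute the limit as $y'\to y$ from inside of the model integral over the flattened piece of $S$. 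The resulting jump coefficient, coming from the single derivative $\partial_{x_j}(-1/(4\pi|x-y|))$ contracted against $n_k(y)a^{(2)}_{kj}(y)$ on a flat surface with outward normal $n(y)$, equals $\tfrac{1}{2}a^{(2)}_{kj}(y)n_k(y)n_j(y)=\mu(y)$, and the sign $-\mu$ results from the exterior-normal convention together with the minus sign in the definition $W_\chi\varphi = -\int_SF(x,y)\varphi\,dS_x$.

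The main obstacle will be handling the lower regularity hypothesis $\chi\in X^2$: for a merely $W_1^2$ radial profile one cannot simply differentiate under the integral sign, and one must establish carefully that (i) the decomposition $P_\chi=P_0+R_\chi$ really produces a $W_R$ whose trace from $\Omega_2$ is continuous and coincides with its direct value, and (ii) that the pointwise jump computation for smooth $\varphi$ extends by density to $\varphi\in H^{\frac{1}{2}}(S)$. Both points rest on the $L_1$-integrability of $\varrho\,\breve\chi(\varrho)$ built into the definition of $X^k$ and on the mapping properties collected in Theorem~\ref{tB.4}; once these are in place, the explicit identification of the jump coefficient as $\mu(y)$ is a routine anisotropic counterpart of the classical double-layer jump.
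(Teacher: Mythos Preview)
The paper does not supply a proof of this statement; Theorem~\ref{tB.7} is quoted verbatim from the reference \cite{CMN3} (Corollary~5.12 and Theorem~5.13), and Appendix~\ref{Appendix B} explicitly says ``detailed proofs can be found in Chkadua et al \cite{CMN3,CMN6}.'' There is therefore no in-paper argument to compare against.

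Your proposed route---separating $P_\chi=P_0+R_\chi$, observing that the $R_\chi$-contributions to both layer potentials have integrable kernels and hence no jump, freezing $a^{(2)}_{kj}(x)$ and $n_k(x)$ at $y$ to isolate the singular part, and then computing the frozen-coefficient jump of $a^{(2)}_{kj}(y)n_k(y)\partial_{x_j}P_0(x-y)$ by the flat-boundary model---is the standard argument and is essentially what is carried out in \cite{CMN3}. The density extension to $\psi\in H^{-1/2}(S)$ and $\varphi\in H^{1/2}(S)$ via the continuity results of Theorems~\ref{tB.4} and~\ref{tB.6} is also the right mechanism. One small point worth tightening: the stated hypothesis is $\chi\in X^{2}$, which by Definition~\ref{dA.1} does not itself enforce $\chi(0)=1$; the normalization enters because the paper assumes $\chi\in X^4_+$ globally (Section~\ref{ss2.2}), and without it the jump coefficient would acquire the factor $\chi(0)$. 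Your acknowledgement of this point is accurate, and the rest of the argument is sound.
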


\begin{theorem}[Chkadua et al\cite{CMN3}, Theorem 5.14]
\label{tB.6}
Let $-\ha < s < \ha$.
 The following  operators are continuous,
\begin{eqnarray}
\label{op1}
{\cal V}_\chi &:& H^{s -\ha}(S)\to
H^{s+\ha}(S), \quad  \chi  \in   X^1, \\
\nonumber
 {\cal W}_\chi  &:&  H^{s+\ha }(S)\to
 H^{s+\ha }(S), \quad  \chi  \in   X^2.
\end{eqnarray}
Moreover, operator \eqref{op1} is Fredholm with zero index.
\end{theorem}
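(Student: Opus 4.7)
The plan is to view $\mathcal{V}_\chi$ and $\mathcal{W}_\chi$ as perturbations of the classical harmonic single and double layer boundary operators on the smooth closed surface $S$. Writing $\chi(x-y) = 1 + [\chi(x-y)-1]$ and using $\chi(0)=1$, I would decompose $\mathcal{V}_\chi = \mathcal{V}_\Delta + \mathcal{R}_V$ and $\mathcal{W}_\chi = \mathcal{W}_\Delta + \mathcal{R}_W$, where $\mathcal{V}_\Delta$ and $\mathcal{W}_\Delta$ are the boundary operators associated with the full harmonic fundamental solution $-(4\pi|x-y|)^{-1}$, and the remainders $\mathcal{R}_V$, $\mathcal{R}_W$ come from the kernel factor $\chi(x-y)-1$, which vanishes on the diagonal.

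First I would deduce the continuity statements. On a $C^\infty$ surface, the classical operators $\mathcal{V}_\Delta$, $\mathcal{W}_\Delta$ are pseudodifferential operators of orders $-1$ and $0$ and satisfy the claimed continuity for every real $s$. The diagonal cancellation $\chi(0)-1=0$, together with the integrability condition $\varrho\breve{\chi}(\varrho)\in L_1$ built into $X^k$, makes the remainder kernels integrable with only mild singularities, so $\mathcal{R}_V$, $\mathcal{R}_W$ act continuously between the relevant Sobolev spaces for $\chi\in X^1$ (resp.\ $\chi\in X^2$) in the range $-\ha<s<\ha$. Summing the two contributions yields the continuity of $\mathcal{V}_\chi$ and $\mathcal{W}_\chi$.

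Next I would establish Fredholmness of $\mathcal{V}_\chi$. The decomposition above identifies $\mathcal{V}_\chi$ as an order $-1$ pseudodifferential operator on $S$ whose principal homogeneous symbol coincides with that of $\mathcal{V}_\Delta$, namely $\mathfrak{S}_0(\mathcal{V}_\chi;\tilde y,\xi') = (2|\xi'|)^{-1}>0$ for $\xi'\neq 0$. Ellipticity on the compact closed manifold $S$ then gives the Fredholm property on the Sobolev scale $H^{s-\ha}(S)\to H^{s+\ha}(S)$ within the stated range, by the same Vishik--Eskin / Boutet de Monvel machinery already invoked in the proof of Theorem~\ref{T4.1}.

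The main obstacle is establishing zero index. I would argue by homotopy: the family $\mathcal{V}_{\chi,t}:=(1-t)\mathcal{V}_\Delta + t\,\mathcal{V}_\chi$, $t\in[0,1]$, consists of elliptic order $-1$ pseudodifferential operators, since the principal symbol equals $(2|\xi'|)^{-1}>0$ for every $t$. Hence each $\mathcal{V}_{\chi,t}$ is Fredholm with index independent of $t$ by the homotopy invariance of the index. At $t=0$ the classical harmonic single layer $\mathcal{V}_\Delta$ on a smooth closed surface is known to have index zero, for instance via the Calder\'on projector calculus, or because it is a strongly elliptic self-adjoint pseudodifferential operator on a closed manifold with positive principal symbol. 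Consequently $\mathrm{Ind}\,\mathcal{V}_\chi = \mathrm{Ind}\,\mathcal{V}_\Delta = 0$, completing the proof.
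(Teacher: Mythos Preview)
The paper does not supply its own proof of this theorem: it is quoted verbatim from Chkadua--Mikhailov--Natroshvili (2009), Theorem~5.14, and the appendix merely records it for reference. So there is no in-paper argument to compare against; I can only assess your sketch on its own merits.

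Your overall strategy---split off the harmonic boundary operators and treat the localisation as a perturbation---is the natural one and is essentially what the cited source does. Two comments. First, the delicate part you gloss over is precisely the one that forces the restriction $-\tfrac12<s<\tfrac12$: for $\chi\in X^1$ one only has $\breve\chi\in W_1^1(0,\infty)$, so the remainder kernel $(\chi(x-y)-1)/|x-y|$ need not be bounded near the diagonal, and the remainder is not a classical smoothing operator. Establishing its mapping properties in that limited Sobolev range is the actual analytic content of the result, and your phrase ``integrable with only mild singularities'' hides this. Second, your zero-index argument is sound but longer than necessary: once you know $\mathcal R_V$ is compact from $H^{s-\frac12}(S)$ to $H^{s+\frac12}(S)$ (which is what one actually proves), the identity $\mathcal V_\chi=\mathcal V_\Delta+\mathcal R_V$ gives $\mathrm{Ind}\,\mathcal V_\chi=\mathrm{Ind}\,\mathcal V_\Delta=0$ directly, with no homotopy needed. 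The homotopy route you chose also works, but it presupposes that $\mathcal V_\chi$ is a classical pseudodifferential operator with a well-defined principal symbol, a statement whose justification for $\chi\in X^1$ again reduces to the same remainder analysis.
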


\begin{remark}
\label{rB.8}
The principal homogeneous symbols of the boundary pseudodifferential operators ${\cal V}_\chi$,
 $-\mu \, {I}+{\cal W}_\chi$, and $(\beta-\mu) \, {I}+{\cal W}_\chi$,
calculated in a local coordinate system with the origin at a point ${\widetilde{y}}\in S$ and the third axis coinciding with the normal
vector at the point ${\widetilde{y}}\in S$, read as
\begin{align}
&
\label{B-d-1}
{\mathfrak{S}}_0({ \cal V}_\chi;{\widetilde{y}}, \xi')=\frac{1}{2\,|\xi'|},
\qquad
 {\mathfrak{S}}_0\big(-\mu \, {I}+{ \cal W}_\chi;{\widetilde{y}}, \xi'\big)
 =-\frac{1}{2}\, \widetilde{a}^{(2)}_{33}({\widetilde{y}}) -
 \frac{i}{2}  \sum\limits_{l=1}^{2}\widetilde{a}^{(2)}_{3l}({\widetilde{y}})\,\frac{\xi_l}{|\xi'|}\,,
 \\
 \label{B-d-2-1}
 &
{\mathfrak{S}}_0\big((\beta -  \mu )\,{I}+{ \mathcal W_\chi};{\widetilde{y}},\xi'\big)=
  \frac{1}{2}\,\Big[
  2\widetilde{\beta}-\widetilde{a}^{(2)}_{33}({\widetilde{y}}) -i\sum\limits_{l=1}^{2}
  \widetilde{a}^{(2)} _{3l}({\widetilde{y}})\,\frac{\xi_l}{|\xi'|}
 \Big]\,,\quad \xi'\in \mathbb{R}^2\setminus \{0\},
\end{align}
where $\big[ \widetilde{a}^{(2)}_{kj}({\widetilde{y}})\,\big]_{k,j=1}^3= \big[ \lambda_{pk}({\widetilde{y}})\,  a^{(2)}_{pq}({\widetilde{y}})\,\lambda_{qj}({\widetilde{y}})\,\big]_{k,j=1}^3= \Lambda({\widetilde{y}})^\top\, \mathbf{a}_2({\widetilde{y}})\, \Lambda({\widetilde{y}})  $ is a positive definite matrix and
\[
 \widetilde{{\beta}} ({\widetilde{y}})=\frac{1}{3}\big [\, \widetilde{a}^{(2)} _{11}({\widetilde{y}})+
 \widetilde{a}^{(2)} _{22}({\widetilde{y}})+ \widetilde{a}^{(2)}_{33}({\widetilde{y}})\, \big]>0\,.
\]
Here $\mathbf{a}_2({\widetilde{y}})=[a^{(2)}_{kj}({\widetilde{y}})]_{k,j=1}^3$
and  $\Lambda({\widetilde{y}})=[\lambda_{kj}({\widetilde{y}})]_{3\times 3}$ is
an orthogonal matrix with the property $\Lambda^\top \, n({\widetilde{y}})=(0,0,-1)^\top$,
where $n({\widetilde{y}})$ is the outward unit normal vector at the point ${\widetilde{y}}\in S$.
Therefore  $\lambda_{p3}({\widetilde{y}})=-n_p({\widetilde{y}}),$ $p=1,2,3.$ In view of \eqref{3.8j}
it is evident that
$
\frac{1}{2}\, \widetilde{a} ^{(2)}_{33}({\widetilde{y}})=\frac{1}{2}\,
\lambda_{p3}({\widetilde{y}})\, a^{(2)} _{pq}({\widetilde{y}})\,\lambda_{q3}({\widetilde{y}})=\mu({\widetilde{y}})>0\,.
$
\end{remark}

 \section{Properties of radiating potentials}
 \label{Appendix C}
  \setcounter{equation}{0}

The layer potentials defined by \eqref{2.37-w}
and the volume potential (cf. \eqref{2.39-w})
\[
{\bf P}_\omega  \,f(y):=\int_{\mathbb{R}^3} \Gamma(x-y,\omega ) \,f(x)\,dx,\quad y\in {\mathbb{R}}^3,
\]
 have the following properties (for details see  Jentsch et al\cite{JN2}).
\begin{lemma}
\label{L.C1}
\mbox{\rm (i)} The following operators are continuous
\[
{\begin{array}{ccll}
{  V_\omega}  &\;:\;&  H^{-\frac{1}{2}}(S)\to H^{1}({{\Omega_2}}, A_1)
\qquad \big[\, H^{-\frac{1}{2}}(S)\to H^{1}_{loc}(\Omega_1, A_1)
                                       \cap Z({\Omega_1})\,\big],\\
{  W_\omega}   &\;:\;&  H^{\frac{1}{2}}(S)\to H^{1}({{\Omega_2}}, A_1)
\;\;\,\qquad\big[\, H^{\frac{1}{2}}(S)\to H^{1}_{loc}(\Omega_1, A_1)
                                       \cap Z({\Omega_1})\,\big],\\
{ {\bf P}_\omega } &\;:\;& H^0_{comp}(\mathbb{R}^3) \to  H^2_{loc}(\mathbb{R}^3)\cap Z(\mathbb{R}^3).
\end{array}}
\]
Moreover,
\[
A_1 { {\bf P}_\omega }f=f \;\;\text{in}\;\;\mathbb{R}^3\;\;\text{for}\;\;f\in H^0_{comp}(\mathbb{R}^3).
\]
\mbox{\rm (ii)} For $h\in H^{-\frac{1}{2}}(S)$ and $g\in H^{\frac{1}{2}}(S)$
the following jump relations  hold true
\begin{align}
&
\gamma^+ { V_\omega}\,h =\gamma^- {  V_\omega}(h)= { \mathcal V_\omega}(h),
\quad
T_1 ^\pm {  V_\omega}\,h =\Big(\pm\, \frac{1}{2}I+{ \mathcal W_\omega'}\Big)h \;\;\;\text{on}\;\;\; S,
\nonumber
\\
&
\gamma^\pm  W_\omega  \,g  =\Big( \mp\frac{1}{2}I+{ \mathcal W_\omega}\Big)g,
\quad
\label{2.45-w}
T_1^+{ W_\omega}\,g={  T_1^-}{  W_\omega}\,g=:{ \mathcal L_\omega}\,g   \;\;\;\text{on}\;\;\; S,
\end{align}
where  $I$ stands for the identity operator, and
\begin{align}
\label{2.46-w}
&
{ \mathcal V_\omega}\,h(y):=-\int_S\Gamma(x-y,\omega)\,h(x)\,dS_x,\quad y\in S,\\
&
\label{2.47-w}
{ \mathcal W_\omega}\,g(y):=-\int_S [{ T_1}(x,\pa_x)\Gamma(x-y,\omega))]\,g(x)\,dS_x, \quad y\in S,\\
 &
\label{2.48-w}
{ \mathcal W_\omega'}\,h(y):=-\int_S [{ T_1}(y,\pa_y)\Gamma(x-y,\omega))]\,h(x)\,dS_x, \quad y\in S,
\end{align}
$\Gamma(x,\omega)$ is the radiating fundamental solution defined by \eqref{2.40-w}.

\mbox{\rm (iii)} The following operators are continuous,
\begin{align*}
&
{ \mathcal V_\omega}: H^{-\frac{1}{2}}(S)\to H^{\frac{1}{2}}(S) , \quad
{ \mathcal W_\omega}: H^{\frac{1}{2}}(S)\to H^{\frac{1}{2}}(S), \quad
{ \mathcal W_\omega'}: H^{-\frac{1}{2}}(S)\to H^{-\frac{1}{2}}(S),  \quad
{ \mathcal L_\omega} : H^{\frac{1}{2}}(S)\to H^{-\frac{1}{2}}(S).
\end{align*}

\mbox{\rm (iv)} The operators ${ \mathcal W_\omega}$, and ${ \mathcal W_\omega'}$
  are compact, since they have weakly singular kernel-functions
of   the  type $O(|x-y|^{-1})$, ${ \mathcal V_\omega}$ is a pseudodifferential operator of order $-1$
with positive principal homogeneous symbol, ${\mathfrak{S}}_0\big({ \mathcal V_\omega}; y, \xi'\big)>0$,
and ${ \mathcal L_\omega} $ is a singular integro-differential operator
(pseudodifferential operator of order $1$) with negative principal homogeneous symbol,
${\mathfrak{S}}_0\big({ \mathcal L_\omega}; y, \xi'\big)<0$; moreover
$$
{\mathfrak{S}}_0\big({ \mathcal L_\omega}; y, \xi'\big)
 =-\big[{4 {\mathfrak{S}}_0\big({ \mathcal V_\omega};y, \xi'\big)}\big]^{-1} <0,
 \quad \xi'\in \mathbb{R}^2\setminus \{0\}, \;\;\;y\in S.
$$
\end{lemma}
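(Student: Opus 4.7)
The plan is to reduce every assertion to the classical theory of Laplace-type layer and volume potentials by exploiting that $\Gamma(x-y,\omega)$ has the same leading singularity at $x=y$ as the fundamental solution $\Gamma_0(x-y)$ of the constant-coefficient principal part $A_1^{(0)} := a^{(1)}_{kj}\partial_k\partial_j$, while their difference is smooth in any bounded region. Writing $\Gamma=\Gamma_0+R$ with $R\in C^\infty(\R^3\setminus\{0\})$ of strictly lower order at the origin (a direct consequence of Taylor expanding the exponential in \eqref{2.40-w}) splits each of the operators $V_\omega$, $W_\omega$, ${\bf P}_\omega$, $\mathcal{V}_\omega$, $\mathcal{W}_\omega$, $\mathcal{W}_\omega'$, $\mathcal{L}_\omega$ into a principal part (governed by $\Gamma_0$) plus a smoothing correction. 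This decomposition will carry the entire argument.

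For (i), continuity of $V_\omega$ and $W_\omega$ then follows from the well-known mapping properties of anisotropic single- and double-layer potentials associated with a constant-coefficient elliptic operator on a $C^\infty$ surface (cf.\ Hsiao \& Wendland\cite{HW}); inclusion in $H^{1,0}(\cdot;A_1)$ uses $A_1V_\omega=A_1W_\omega=0$ off $S$. Membership in $Z(\Omega_1)$ is read off from the asymptotic expansion \eqref{2.41-w}, differentiated once to verify \eqref{2.12-w} with $\xi(\eta)$ as in \eqref{2.11-w}. For ${\bf P}_\omega$, the identity $A_1{\bf P}_\omega f=f$ is exactly the defining property of the fundamental solution $\Gamma$, while the $H^0_{comp}\to H^2_{loc}$ continuity is interior elliptic regularity for $A_1$; the radiation behaviour again follows from \eqref{2.41-w}.

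For (ii), the jump relations follow by the classical Plemelj-type limiting argument applied to $\Gamma$ and its co-normal derivative. The weak singularity of $\Gamma$ yields continuity of $V_\omega$ across $S$, while the Cauchy-type singularity of $T_1(x,\pa_x)\Gamma(x-y,\omega)$ produces the $\pm\ha I$ jumps for $W_\omega$ and $T_1^\pm V_\omega$; the precise $\ha$ is fixed by the solid-angle calculation adapted to the anisotropic metric induced by $\mathbf{a}_1$. The Lyapunov-Tauber identity $T_1^+W_\omega=T_1^-W_\omega$ is immediate from (i): $W_\omega g$ lies in $H^{1,0}(\Omega_q;A_1)$ with $A_1W_\omega g=0$ on both sides of $S$, so the canonical co-normal derivatives are well defined through \eqref{2.14-w}--\eqref{2.15-w1} and coincide. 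Part (iii) is then obtained by composing $\gamma^\pm$ and $T_1^\pm$ with the mapping properties in (i) and identifying the limits via (ii).

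For (iv), compactness of $\mathcal{W}_\omega$ and $\mathcal{W}_\omega'$ holds because on the smooth compact surface $S$ the apparent $O(|x-y|^{-2})$ singularity of $T_1\Gamma$ is reduced to $O(|x-y|^{-1})$ by cancellation arising from the tangential geometry of $S$, making the kernels weakly singular and the operators Hilbert-Schmidt. The pseudodifferential orders and principal symbols of $\mathcal{V}_\omega$ and $\mathcal{L}_\omega$ are computed in a local frame at $\widetilde{y}\in S$ aligned so that the normal is the $\eta_3$-axis, by integrating the Fourier representation of $\Gamma_0$ in the normal variable $\xi_3$ via residues, exactly as carried out for $A_2$ in Section \ref{S-4}; positivity of $\mathfrak{S}_0(\mathcal{V}_\omega)$ and negativity of $\mathfrak{S}_0(\mathcal{L}_\omega)$ both stem from the positive definiteness of the pulled-back matrix $\widetilde{\mathbf{a}}_1=\Lambda^\top\mathbf{a}_1\Lambda$. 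The reciprocal identity $\mathfrak{S}_0(\mathcal{L}_\omega)=-[4\,\mathfrak{S}_0(\mathcal{V}_\omega)]^{-1}$ is a symbol-level form of the Calder\'on projector relation for the elliptic operator $A_1$. The main obstacle is precisely this last computation: executing the $\xi_3$-contour integral for the anisotropic kernel and extracting the reciprocal identity requires algebraic manipulation of the roots of the characteristic polynomial of $A_1^{(0)}$ in the local coordinates, mirroring the delicate calculation used for $A_2$ but made entirely explicit by the constant-coefficient nature of $A_1$.
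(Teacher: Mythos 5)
The paper does not actually prove Lemma \ref{L.C1}: it is presented as a list of known properties with a wholesale reference to Jentsch et al\cite{JN2}, so there is no in-paper argument to compare against. Your sketch reconstructs the standard proof that the cited source (and the classical potential-theory literature, e.g.\ Hsiao \& Wendland\cite{HW}) carries out: split $\Gamma(\cdot,\omega)$ into the static anisotropic fundamental solution $\Gamma_0$ of $a^{(1)}_{kj}\partial_k\partial_j$ plus a lower-order remainder, read off mapping properties and Plemelj jump relations from anisotropic layer-potential theory, get membership in $Z(\Omega_1)$ from the asymptotics \eqref{2.41-w}, and compute principal symbols in a boundary-adapted frame; this is the right route and I see no gap in substance. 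Two points deserve care. First, the remainder $R=\Gamma-\Gamma_0$ is \emph{not} smooth across the origin --- odd powers of the anisotropic distance $({\bf a}_1^{-1}x\cdot x)^{1/2}$ survive when you Taylor-expand the exponential in \eqref{2.40-w} --- so the phrase ``their difference is smooth in any bounded region'' overstates matters; the statement you actually use, that $R$ is of strictly lower singularity order at $x=0$ so the correction operators gain one order and are compact, is the correct one and suffices. Second, deducing ${\mathfrak{S}}_0(\mathcal L_\omega)=-\big[4\,{\mathfrak{S}}_0(\mathcal V_\omega)\big]^{-1}$ from the Calder\'on relation $\mathcal L_\omega\mathcal V_\omega=(\mathcal W_\omega')^2-\frac{1}{4} I$ needs the fact that the order-zero principal symbol of $\mathcal W_\omega'$ vanishes; this is precisely the weak-singularity assertion of part (iv) (the co-normal derivative $T_1$ is the one matched to $A_1$, so the double-layer kernel on the smooth surface $S$ is only $O(|x-y|^{-1})$, giving an operator of order $-1$), so your argument closes, but that dependence should be stated explicitly rather than left implicit.
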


\begin{lemma}
\label{L.C2}
Let ${\mathcal K_\omega}$ and ${\mathcal M_\omega}$ be defined by \eqref{2.61-w} and  \eqref{2.62-w} with
$\alpha>0$.   The  following operators are invertible
\[
{ \mathcal K_\omega} : H^{-\frac{1}{2}}(S)\to H^{-\frac{1}{2}}(S) , \qquad
{ \mathcal M_\omega} :  H^{\frac{1}{2}}(S)\to H^{-\frac{1}{2}}(S) .
\]

\end{lemma}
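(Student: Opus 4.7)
The plan is to prove invertibility for each operator by showing it is Fredholm of index zero and injective, following the classical combined-field integral equation strategy of Brakhage--Werner and Panich, adapted to the anisotropic operator $A_1$.

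For $\mathcal K_\omega : H^{-\frac12}(S)\to H^{-\frac12}(S)$, the starting observation is that $\mathcal K_\omega$ is a compact perturbation of $\tfrac12 I$: by Lemma \ref{L.C1}(iv), $\mathcal W_\omega'$ is compact on $H^{-\frac12}(S)$, while $\mathcal V_\omega : H^{-\frac12}(S)\to H^{\frac12}(S)$ is continuous and the embedding $H^{\frac12}(S)\hookrightarrow H^{-\frac12}(S)$ is compact; hence $\mathcal K_\omega$ is Fredholm with zero index. For injectivity, suppose $\mathcal K_\omega g=0$ and set $w:=V_\omega g$. Then $A_1 w=0$ in $\mathbb R^3\setminus S$, $w\in Z(\Omega_1)$, and $T_1^+w - i\alpha\gamma^+w=0$ on $S$ by definition of $\mathcal K_\omega$. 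Applying Green's first identity for $A_1$ in $\Omega_2$ (which is applicable since $w\in H^{1,0}(\Omega_2,A_1)$ by Lemma \ref{L.C1}(i)), using the real-valuedness and symmetry of $a^{(1)}_{kj}$ and $\kappa_1$, the left-hand side is real, so the imaginary part of the right-hand side $i\alpha\|\gamma^+w\|^2_{L_2(S)}$ must vanish; this gives $\gamma^+w=0$ and $T_1^+w=0$ on $S$. Since $w$ is real-analytic and solves the constant-coefficient equation $A_1w=0$ in $\Omega_2$ with vanishing Cauchy data on $S$, the Cauchy--Kovalevskaya--Holmgren theorem yields $w=0$ in $\Omega_2$. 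Continuity of the single-layer trace gives $\gamma^-w=\gamma^+w=0$, so Lemma \ref{L2.1} forces $w=0$ in $\Omega_1$; the jump relation $T_1^-w-T_1^+w=g$ then delivers $g=0$.

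For $\mathcal M_\omega : H^{\frac12}(S)\to H^{-\frac12}(S)$, Fredholmness with zero index is obtained by comparison with $\mathcal L_\omega$: the remainder $-i\alpha(-\tfrac12 I+\mathcal W_\omega)$ maps $H^{\frac12}(S)\to H^{\frac12}(S)$ and hence compactly into $H^{-\frac12}(S)$, so $\mathcal M_\omega-\mathcal L_\omega$ is compact; the operator $\mathcal L_\omega$ is scalar elliptic of order $1$ on the closed smooth manifold $S$ with non-vanishing (negative) scalar principal symbol (Lemma \ref{L.C1}(iv)), which gives it zero index by the standard elliptic theory on closed manifolds (scalar principal symbols can be homotoped through ellipticity to a multiplication-type operator). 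Injectivity follows by an argument parallel to the one above: if $\mathcal M_\omega h=0$, set $w:=W_\omega h$, so that $A_1w=0$ off $S$, $w\in Z(\Omega_1)$, and $T_1^+w-i\alpha\gamma^+w=0$ on $S$; Green's first identity in $\Omega_2$ again produces $\gamma^+w=T_1^+w=0$, whence $w=0$ in $\Omega_2$ by Holmgren. Since the normal derivative of the double layer is continuous across $S$, $T_1^-w=T_1^+w=0$, so $w$ is a radiating solution of the exterior homogeneous Neumann problem for $A_1$, and Lemma \ref{L2.1} (applied via Green's identity in $\Omega_1(R)$ as in the proof of Theorem \ref{T2.4}) yields $w=0$ in $\Omega_1$; the jump relation $\gamma^-w-\gamma^+w=h$ then gives $h=0$.

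The main technical point I expect to be delicate is the index assertion for $\mathcal M_\omega$ in the anisotropic setting. A homotopy in the frequency parameter $\omega$ is obstructed by resonant values of the interior Dirichlet problem for $A_1$, so the cleaner route is to homotope through the family of scalar elliptic pseudodifferential operators whose principal symbols interpolate between $\mathfrak S_0(\mathcal L_\omega;y,\xi')$ and, say, $-|\xi'|$, checking that the principal symbol stays strictly negative along the path so that ellipticity (hence the Fredholm index) is preserved. Apart from this, the argument is essentially routine given the already-established mapping properties of the radiating layer potentials and the Rellich--Vekua lemma.
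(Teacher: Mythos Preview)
Your proposal is correct and follows the classical Brakhage--Werner/Panich combined-field strategy, adapted to the anisotropic operator $A_1$. The paper itself does not supply a proof of this lemma: it is stated in Appendix~C alongside the other mapping properties of the radiating potentials, with the blanket attribution ``for details see Jentsch et al.\cite{JN2}'', so there is no in-paper argument to compare against. Your route---Fredholmness of index zero via compact perturbation of $\tfrac12 I$ (for $\mathcal K_\omega$) or of the strongly elliptic scalar operator $\mathcal L_\omega$ (for $\mathcal M_\omega$), followed by injectivity through the interior Robin problem, Green's first identity, Holmgren/analyticity, and the anisotropic Rellich--Vekua Lemma~\ref{L2.1}---is exactly the standard one in that literature and in Colton--Kress\cite{CK1,CK2}.

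Two minor remarks. First, the jump relation for the conormal derivative of the single layer reads $T_1^+ V_\omega g - T_1^- V_\omega g = g$ (cf.\ Lemma~\ref{L.C1}(ii)), so your sign is flipped; this is harmless here since both traces vanish. Second, for the index of $\mathcal L_\omega$ (and hence $\mathcal M_\omega$) you can avoid the symbol homotopy altogether by observing from Lemma~\ref{L.C1}(iv) and Remark~\ref{rC.3} that $\mathfrak S_0(\mathcal V_\omega)\,\mathfrak S_0(\mathcal L_\omega)=-\tfrac14$, so $\mathcal V_\omega\mathcal L_\omega$ is $-\tfrac14 I$ plus a compact operator on $H^{\frac12}(S)$; since $\mathcal V_\omega$ itself has index zero (positive scalar principal symbol, or the same homotopy argument), $\mathrm{ind}\,\mathcal L_\omega=0$ follows immediately.
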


\begin{remark}
\label{rC.3}
The principal homogeneous symbols of the pseudodifferential operators
$\mathcal K_\omega$, $\mathcal M_\omega$, $\mathcal M_\omega^{-1}\mathcal K_\omega$, and
${\mathcal K}_\omega^{-1}{\mathcal M_\omega}$ and
calculated in a local coordinate system described in Remark \ref{rB.8} satisfy the relations
\begin{align}
&
\label{C-2}
{\mathfrak{S}}_0\big({ { \mathcal K_\omega}};y , \xi'\big)= {1}/{2},
\qquad
 {\mathfrak{S}}_0\big({ \mathcal M_\omega}; y, \xi'\big)
 =- \big[{4 \,{\mathfrak{S}}_0\big({ \mathcal V_\omega};y, \xi'\big)}\big]^{-1}  < 0,
 \\
   &
  \label{C-3}
 {\mathfrak{S}}_0\big({ \mathcal M}_\omega ^{-1}{\mathcal K_\omega};y,\xi'\big)
 =-2\,{\mathfrak{S}}_0\big({ \mathcal V_\omega};y, \xi'\big) < 0 ,
 \\
 &
 \label{C-3-1}
 {\mathfrak{S}}_0\big({ \mathcal K}_\omega ^{-1}{\mathcal M_\omega};y , \xi'\big)
 =-\big[{2\,{\mathfrak{S}}_0\big({ \mathcal V_\omega};y , \xi'\big)}\big]^{-1}<0,
 \quad \xi'\in \mathbb{R}^2\setminus \{0\}, \;\;\;y\in S.
\end{align}
\end{remark}

\begin{remark}
\label{rC.4}
 The principal homogenous symbols  of the operators
 ${ \mathcal V_\omega}$, ${ \mathcal M_\omega}$,  and  $-{ \mathcal K_\omega}^{-1}{ \mathcal M_\omega}$ are positive
  in view of Lemma $\mbox{\rm\ref{L.C1} (iv)}$ and Remark \ref{rC.3}. Therefore
  it can be shown   that there are constants $C_1>0$ and $C_2\geqslant 0$ such that
  the following inequalities hold (cf, e.g.,   Theorem 6.2.7 in Hsiao \& Wendland\cite{HW})
\begin{align}
  &
\langle {\psi, \mathcal{V}}_\omega \psi\rangle_S\geq C_1 \| \psi\|^2_{H^{-\frac{1}{2}}(S)}-C_2\| \psi\|^2_{H^{-\frac{3}{2}}(S)}
\quad \forall\, \psi\in H^{-\frac{1}{2}}(S),
 \nonumber
\\
&
\langle   \mathcal{M}_\omega\psi\,,\,\psi\rangle_S
\geq C_1 \| \psi\|^2_{H^{\frac{1}{2}}(S)}-C_2\| \psi\|^2_{H^{-\ha}(S)}
\quad \forall\, \psi\in H^{\frac{1}{2}}(S),
 \nonumber
\\
\label{C-4-dd}
&
\langle { -\mathcal{K}}_\omega^{-1} \mathcal{M}_\omega\psi\,,\,\psi\rangle_S
\geq C_1 \| \psi\|^2_{H^{\frac{1}{2}}(S)}-C_2\| \psi\|^2_{H^{-\ha}(S)}
\quad \forall\, \psi\in H^{\frac{1}{2}}(S).
\end{align}
\end{remark}

\noindent
\ack{\textit{\bf Acknowledgement}}\\
 {This research was supported by the grants EP/H020497/1: ``Mathematical
Analysis of Localized Boundary-Domain Integral Equations for
Variable-Coefficient Boundary Value Problems'' and EP/M013545/1: ``Mathematical
Analysis of Boundary-Domain Integral Equations for Nonlinear PDEs'', from
the EPSRC, UK.}



\end{document}